\newcommand{\commentout}[1]{}
\newtheorem{thm}{Theorem}[section]
\newtheorem{prop}[thm]{Proposition}
\newtheorem{cor}[thm]{Corollary}
\newtheorem{rmk}[thm]{Remark}
\newcommand{\nwc}{\newcommand*}
\nwc{\ben}{\begin{equation*}}
\nwc{\bea}{\begin{eqnarray}}
\nwc{\beq}{\begin{eqnarray}}
\nwc{\bean}{\begin{eqnarray*}}
\nwc{\beqn}{\begin{eqnarray*}}
\nwc{\beqast}{\begin{eqnarray*}}
\nwc{\eal}{\end{align}}
\nwc{\een}{\end{equation*}}
\nwc{\eea}{\end{eqnarray}}
\nwc{\eeq}{\end{eqnarray}}
\nwc{\eean}{\end{eqnarray*}}
\nwc{\eeqn}{\end{eqnarray*}}
\theoremstyle{remark}
\nwc{\nn}{\nonumber}
\nwc{\mb}{\mathbf}
\nwc{\ml}{\mathcal}
\newcommand{\lt}{\left}
\newcommand{\rt}{\right}
\nwc{\vep}{\varepsilon}
\nwc{\ep}{\epsilon}
\nwc{\vrho}{\varrho}
\nwc{\orho}{\bar\varrho}
\nwc{\vpsi}{\varpsi}
\nwc{\lamb}{\lambda}
\nwc{\om}{\omega}
\nwc{\Om}{\Omega}
\nwc{\al}{\alpha}
\nwc{\IA}{\mathbb{A}} %algebraic
\nwc{\bi}{\mathbf i}
\nwc{\bo}{\mathbf o}
\nwc{\IB}{\mathbb{B}}
\nwc{\IC}{\mathbb{C}} %complex
\nwc{\ID}{\mathbb{D}} %Dedekind
\nwc{\IM}{\mathbb{M}} %Dedekind
\nwc{\IP}{\mathbb{P}} %Dedekind
\nwc{\II}{\mathbb{I}} %Dedekind
\nwc{\IE}{\mathbb{E}} %Euklides
\nwc{\IF}{\mathbb{F}} %finite field
\nwc{\IG}{\mathbb{G}} %Gauss
\nwc{\IN}{\mathbb{N}} %natural
\nwc{\IQ}{\mathbb{Q}} %rational
\nwc{\IR}{\mathbb{R}} %real
\nwc{\IT}{\mathbb{T}} %torus
\nwc{\IZ}{\mathbb{Z}} %integers
\nwc{\cE}{{\ml E}}
\nwc{\cP}{{\ml P}}
\nwc{\cQ}{{\ml Q}}
\nwc{\cL}{{\ml L}}
\nwc{\cX}{{\ml X}}
\nwc{\cW}{{\ml W}}
\nwc{\cZ}{{\ml Z}}
\nwc{\cR}{{\ml R}}
\nwc{\cV}{{\ml V}}
\nwc{\cT}{{\ml T}}
\nwc{\crV}{{\ml L}_{(\delta,\rho)}}
\nwc{\cC}{{\ml C}}
\nwc{\cO}{{\ml O}}
\nwc{\cA}{{\ml A}}
\nwc{\cK}{{\ml K}}
\nwc{\cB}{{\ml B}}
\nwc{\cD}{{\ml D}}
\nwc{\cF}{{\ml F}}
\nwc{\cS}{{\ml S}}
\nwc{\cM}{{\ml M}}
\nwc{\cG}{{\ml G}}
\nwc{\cH}{{\ml H}}
\nwc{\bk}{{\mb k}}
\nwc{\bn}{{\mb n}}
\nwc{\bp}{{\mb p}}
\nwc{\bz}{\mb z}
\nwc{\by}{\mathbf{h}}
\nwc{\bZ}{\mathbf{Z}}
\nwc{\bF}{\mathbf{F}}
\nwc{\bE}{\mathbf{E}}
\nwc{\bV}{\mathbf{V}}
\nwc{\bY}{\mathbf Y}
\nwc{\br}{\mb r}
\nwc{\pft}{\cF^{-1}_2}
\nwc{\bU}{{\mb U}}
\nwc{\bG}{{\mb G}}
\nwc{\bg}{\mathbf{g}}
\nwc{\mbf}{\mathbf{f}}
\nwc{\mbe}{\mathbf{e}}
\nwc{\be}{\mathbf{e}}
\nwc{\ind}{\operatorname{I}}
\nwc{\mbx}{\mathbf{f}}
\nwc{\bb}{\mathbf{g}}
\nwc{\xmax}{f_{\rm max}}
\nwc{\xmin}{f_{\rm min}}
\nwc{\suppx}{\hbox{\rm supp} (\mbf)}
\nwc{\cI}{\IZ^2_N}
\nwc{\chis}{{\chi^{\rm s}}}
\nwc{\chii}{{\chi^{\rm i}}}
\nwc{\pdfi}{{f^{\rm i}}}
\nwc{\pdfs}{{f^{\rm s}}}
\nwc{\pdfii}{{f_1^{\rm i}}}
\nwc{\pdfsi}{{f_1^{\rm s}}}
\nwc{\thetatil}{{\tilde\theta}}
\nwc{\red}{\color{red}}
\nwc{\blue}{\color{blue}}
\nwc{\prox}{\hbox{prox}}
\nwc{\diag}{\hbox{\rm diag}}
\nwc{\supp}{{\hbox{\rm supp}}}
\nwc{\sloc}{J_{\rm f}}
\nwc{\bu}{\xi}
\nwc{\bv}{\eta}
\nwc{\cU}{\mathcal{U}}
\nwc{\cN}{\mathcal{N}}
\nwc{\bN}{\mathbf{N}}
\nwc{\mbm}{\mathbf{m}}
\nwc{\bw}{\mathbf{w}}
\nwc{\bom}{\mathbf{w}}
\nwc{\bt}{\mathbf{t}}
\nwc{\z}{y}
\nwc{\cY}{\mathcal{Y}}
\nwc{\bM}{\mathbf{M}}
\nwc{\half}{{1\over 2}}
\nwc{\Sf}{S_{\rm f}}
\nwc{\Jf}{J_{\rm f}}
\nwc{\nul}{\hbox{\rm null}_\IR}
\nwc{\spanR}{\hbox{\rm span}_\IR}
\nwc{\Arg}{\hbox{\rm Arg~}}
\nwc{\fdr}{S_{\rm f}}
\nwc{\phase}[1]{\exp\lt[i\measured #1\rt]}
\nwc{\im}{{\rm i}}
\nwc{\tphi}{{{\phi}_0}}
\nwc{\cle}{\preccurlyeq}
\nwc{\modpi}{{{\rm mod}\,2\pi}}
\nwc{\lb}{\llbracket}
\nwc{\rb}{\rrbracket}
\nwc{\sgn}{\mbox{\rm sgn}}
\begin{document}
%\title{Coded Aperture Ptychography via the Fourier-Domain Douglas-Rachford Algorithm}
 \title{Coded Aperture Ptychography: Uniqueness and 
Reconstruction}

%\commentout{
\author{ Pengwen Chen
\address{ Applied Mathematics, National Chung Hsing University, Taiwan. Email: {\tt pengwen@nchu.edu.tw} } 
\and Albert Fannjiang 
 \address{
Corresponding author.  Department of Mathematics, University of California, Davis, California,  USA. Email:  {\tt fannjiang@math.ucdavis.edu}
}}
%}
\begin{abstract}
Uniqueness of solution is proved  for any
ptychographic scheme with a random masks under
a minimum overlap condition and  local geometric convergence analysis is given 
for the alternating projection (AP) and Douglas-Rachford (DR) algorithms.
DR is shown to possess a unique fixed point in the object domain and
for AP a simple criterion for distinguishing the true solution among possibly many  fixed points 
is given. 

A minimalist scheme is proposed where the adjacent masks overlap 50\% of area and each pixel of the object is illuminated by exactly four times during the whole  measurement process. 
Such a scheme is conveniently 
parametrized by  the number $q$ of shifted masks in each direction.
The lower bound $1-C/q^2$ is proved for the geometric convergence rate of the minimalist scheme,    predicting a poor performance with large $q$ which is confirmed by numerical experiments. 

Extensive numerical experiments are performed to explore what the general features of a well-performing mask are like, what the best-performing  values of  $q$ for a given mask are,  how robust the minimalist scheme is with respect to measurement noise and what the significant factors affecting
the noise stability are.

\end{abstract}

\maketitle 
\section{Introduction}
X-ray ptychography
 is a coherent diffractive imaging method that uses multiple micro-diffraction patterns obtained through the scan of a localized illumination on the specimen. Ptychographic imaging along with advances in detectors and computation techniques have resulted in optical and electron microscopy  with enhanced resolution without the need for lenses  \cite{ptycho10, supres-PIE, ePIE08,probe09,Chap}.
 %The idea is to scan an extended sample with adjacent probe spots overlapped significantly in the object plane. 

%so that iterative algorithms can successfully reconstruct an image of the specimen, retrieving the amplitude and phase of both the complex sample transmission function and the illuminating probe. The adjacent illuminations typically have to overlap around 60 - 70 \% in every direction. %This corresponds to at least 3 illuminations for every point of the object and roughly more than 3 Fourier measurements in two dimensions.  

Ptychography was initially proposed by Hoppe for transmission electron diffraction microscopy \cite{Hoppe1}. In his pioneering work Hoppe showed that recording diffraction patterns at two positions removes the remaining ambiguity between the correct solution and its complex conjugate. Hoppe \cite{Hoppe2} has considered the extension to non-periodic objects with phase-shifting plates as well. 

However, 
only after Faulkner and Rodenburg proposed the so called ptychographical iterative engine (PIE) \cite{PIE104,PIE204,PIE05}, the redundant information collected via  overlapping illuminations was effectively harnessed (see also \cite{Fie08,ePIE08, probe09, ePIE09}).  
A key to success of ptychographic reconstruction is that the adjacent illuminated areas overlap substantially, around
60-70\% in each direction \cite{overlap,ePIE09}. 

The first question for any inverse problems, including ptychography,  is uniqueness of solution. 
This has been resolved in \cite{Iwen} for the
ptychographic scheme  where all possible shifts of a damped and windowed Fourier transform are used, i.e.
with the maximum overlap between adjacent illuminated areas (see more discussion in Section \ref{sec:contribution}).
However, maximum overlap requires overly redundant measurements and hence
the uniqueness question remains for practical ptychographic schemes with a significantly less overlap. 

Another mathematical question surrounding ptychography is convergence analysis of reconstruction algorithms. Few results provide concrete conditions for verifying convergence to the true solution 
and give an explicit estimate for the convergence rate \cite{Iwen,Hesse,ADM}. In particular,
 (global or local) geometric convergence to the {\em true} ptychographic solution has not been established  for any ptychographic reconstruction that assumes less than the maximum overlap between adjacent illuminated areas.

On the other hand, a recurring problem on the technical side in standard ptychography  is an extremely large 
dynamic range with a zero-order component several orders of magnitude more intense than the scattered field. 
To this end, a beam-stop may be introduced to block the zero-order component. Alternatively, 
a randomly phased mask (a diffuser) can be  deployed to  reduce the dynamic range of the recorded diffraction patterns by more than one order of magnitude \cite{rpi,ptycho-rpi,ePIE10}. 

With these motivations, a main purpose of the present work is to
establish the uniqueness theorem for ptychography with a random mask under a minimum 
overlap condition and to prove local geometric convergence to the true solution for the widely used
Alternating Projections (AP) and Douglas-Rachford (DR) algorithms.  Moreover, we give an explicit
bound for the rate of convergence for both algorithms with the minimalist ptychographic scheme
introduced below.

%\section{Ptychographic measurement}

First we describe how each constituent diffraction pattern is measured in our ptychographic scheme.

\subsection{Oversampled diffraction pattern}

 Let $f^0$ be a part of  the unknown object $f$ restricted to the initial subdomain 
 \[
\cM^0=\{\bn = (n_1,n_2)\in \IZ^2: 0\le n_1, n_2\le m\}. 
 \]

Let the Fourier transform of $f$ be written as 
\[
F^0(\bw)=\sum_{\mbm\in \cM^0} e^{-\im 2\pi \mbm\cdot\bw} f^0(\mbm),\quad \bw=(w_1,w_2). 
\]

Under the Fraunhofer 
approximation, the diffraction pattern is proportional to 
$\lt|F^0\lt(\bw\rt)\rt|^2$ which can be written as  \beq
 I^0(\bw)=  \sum_{\bn =-(m,m)}^{(m,m)}\lt\{\sum_{\mbm\in \cM^0} f^0(\mbm+\bn)\overline{f^0(\mbm)}\rt\}
   e^{-\im 2\pi \bn\cdot \bom},\quad \bom\in [0,1]^2.\label{auto}
   \eeq
   \commentout{
   which is the Fourier transform of the autocorrelation
   \beqn
	  R_{ f^0}(\bn)=\sum_{\mbm\in \cM} f^0(\mbm+\bn)\overline{f^0(\mbm)}.
	  \eeqn
	  }
Here and below the over-line notation means
complex conjugacy. 

%\commentout{
The expression in the parentheses in \eqref{auto} is the autocorrelation function of $f^0$ and
the summation over $\bn$ takes the form of Fourier transform on  the enlarged  grid
 \begin{equation*}
 \widetilde \cM^0 = \{ (m_1,m_2)\in \IZ^2: -m\le m_1 \le m, -m\le m_2\leq m \} 
 \end{equation*}
%whose cardinality is $(2m-1)^2$.
which suggests sampling $I^0(\bw)$ 
 on the grid 
\beq\label{L}
\cL = \Big\{(w_1,w_2)\ | \ w_j = 0,\frac{1}{2 m+ 1},\frac{2}{2m+ 1},\cdots,\frac{2m}{2m+ 1}\Big\}. 
\eeq
Let $\Phi^0: \IC^{|\cM^0|}\to \IC^{|\widetilde \cM^0|}$ be the {$\cL$-sampled} discrete Fourier transform (ODFT) defined on $\cM^0$. We can write $I^0(\bw)=|\Phi^0 f^0(\bw)|^2$ for all $\bw\in \cL$.

%Let  $f\in \IC^N$ represent a unknown discrete, rank-2 object. Let 

A randomly coded diffraction pattern measured with a mask is 
the diffraction pattern for 
the  {\em masked object} $
g^0(\bn) =f^0(\bn) \mu^0(\bn)$ 
where the mask function $\mu^0$ is a finite array of random variables.   
With $
\mu^0(\bn)=|\mu^0(\bn)|e^{\im \phi(\bn)}
$
we will focus on the effect of {\em random phase} $\phi$.
For the uniqueness theorem we will assume $\phi(\bn)$ to be  independent, continuous real-valued random variables. In other words, each $\phi(\bn)$ is independently distributed with a probability density function on $[0,2\pi]$ that  may depend on $\bn$.

The continuity assumption on $\phi$ is a technical one for proving {\em almost sure} uniqueness. 
If $\phi$ are discrete random variables, then we would have to settle for uniqueness with high probability. 
  Continuous phase modulation can be experimentally realized with various
techniques depending on the wavelength. See 
\cite{meta1, meta2, meta3, Waller, ptycho-rpi, Spread} for recent innovation and development of random phase modulation techniques.

We also assume that  $|\mu^0(\bn)|\neq 0,\forall \bn\in \cM$ (i.e. the mask is transparent). This is necessary for unique reconstruction of the object
as any opaque pixel of the mask where $\mu^0(\bn)=0$ would block the transmission of the information $f^0(\bn)$. 
By absorbing $|\mu^0(\bn)|$ into the object function we can assume,
without loss of generality, that $|\mu^0(\bn)|=1,\forall\bn\in \cM^0$, i.e. $\mu^0$ represents
a {\em phase} mask.

%\subsection{Related literature}

%\section{Ptychographic measurement}
  \begin{figure}[t]
\begin{center}
%\subfigure[TCB]{
\includegraphics[width=10cm]{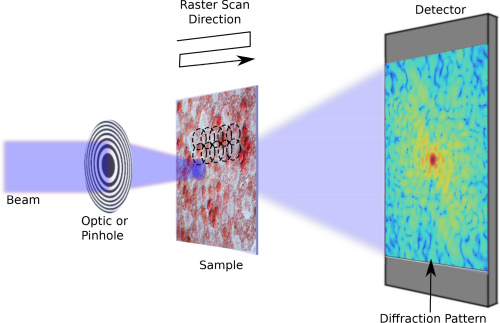}
%}\hspace{-0.6cm}
\caption{Simplified ptychographic setup showing a Cartesian grid used for the overlapping raster scan positions \cite{parallel}.
}
\label{fig0}
\end{center}
\end{figure}

Now consider the simplest, 2-part  ptychographic set-up:  
the object domain is the union of 
two overlapping square grids, one of which is the translate of the other square grid. Denote the two square grids by $\cM^0$ and $\cM^\bt$ 
which is the shift of $\cM^0$ by the displacement vector $\bt=(t_1,t_2)\in \IZ^2$. 
We shall make the overlap assumption
\beq
\label{2.3}
\lt|\cM^0\cap \cM^{\bt}\cap \supp{(f)}\rt|\geq 2,
\eeq
where $\lt|\cdot\rt|$ denotes the cardinality of a set, 
i.e. the intersection of the two grids contains at least two points from the support of the object.

Let 
$f^{\bt}$ be the unknown object restricted to $\cM^\bt$ and $\Phi^\bt$ the  ODFT on $\cM^{\bt}$. We write the object function as $f=f^{0}\cup f^\bt$ where 
$f^{0}(\mbm)=f^\bt(\mbm)$ for all $\mbm\in \cM^0\cap \cM^\bt$.
Let $f^0$ and $f^\bt$ be respectively illuminated with the mask $\mu^0$  and
the  mask $\mu^\bt$ on where  
 $\mu^\bt(\bn)=\mu^0(\bn-\bt), $ for all $\bn\in \cM^\bt$.

%By the natural extension, we can write $f^0=f\II_\cM$ and $f^\bt=f\II_{T\cM}$  where $\II$ is the indicator function of the set in the subscript. 

For multi-part ptychography, let the object domain be contained in the union of the shifted square grids:\beq\label{1.5}
\supp(f)\subseteq \bigcup_{\bt \in \cT}\cM^{\bt}
\eeq
 where $\cT$ is a set of shifts. 
 We can write
 \beq
 \label{2.6}
 f=\bigcup_{\bt\in \cT} f^\bt.
 \eeq
  Analogous to \eqref{2.3}
we assume that for every $\cM^{\bt_1}\cap\supp(f)\neq \emptyset, \bt_1\in \cT$, there is another 
$\cM^{\bt_2}, \bt_2\in \cT, \bt_2\neq \bt_1$ such that 
\beq\label{2.7}
\lt|\cM^{\bt_1}\cap \cM^{\bt_2}\cap \supp(f)\rt|\geq 2. %\quad \hbox{ contains at least two points}.
\eeq 
In other words, {\em  every connected component of the object is contained in the union of at least
two distinct masks, whose intersection contains at least two points of the object support. }
Since  the support of the object is often not known precisely,  some illuminations may
totally miss the object and produce no useful information. These illuminations and the resulting data are
easily recognized and should be discarded.

\subsection{A  minimalist ptychographic scheme}
\label{sec:scheme}

Although our uniqueness theorem and local convergence analysis are for
general ptychographic measurement schemes satisfying \eqref{2.7}, we will
consider the following more structured scheme
for numerical experiments and explicit estimation of
convergence rate. We call this practical scheme {\em the minimalist scheme} because each object pixel participates
exactly in four diffraction patterns (two in each direction) and two adjacent mask domains 
have the minimum  (i.e. $50\%$) overlap. 

Suppose the initial mask domain $\cM^0$ is $m\times m$ ($m$ is an even integer) and an adjacent domain is obtained by shifting $m/2$ in either direction.  To cover each object pixel exactly four times (two in each direction) we assume $m= 2n/q$ with an integer $q$. This amounts to $q^2$ diffraction patterns.
%For the ease of discussion, we shall focus on this special case.  With more elaborate analysis, our method can be extended to  the general case $m\ge 2n/q$. 
In other words, we 
consider  the shift $T^{kl}$ corresponding to the displacement $\bt_{kl}={m\over 2}(k,l)$, with $k\in \{0,1,\cdots, q-1\},~l\in \{0,1,\cdots, q-1\}$. When $k=q-1$ or $l=q-1$ we
assume for simplicity that the shifted mask is wrapped around into the other end of the object domain (i.e. the periodic boundary condition). Four times coverage  and four times oversampling
in each diffraction pattern  together  produce the total number $q^2(2m-1)^2\approx 16n^2$ of data. 
 
 We emphasize two features of the minimalist scheme: (i) The scheme has a fixed total oversampling ratio ($\approx 16$) independent of the total number of shifted masks, $q^2$; (ii) This scheme has the minimum ($50\%$) 
 overlap between two adjacent masks while maintaining the same number of coverage
 (i.e. 4) for every pixel of the object. Note that the $50\%$ overlap is lower than the required overlap found empirically  (i.e. $60\%-70\%$) from
 previous studies  \cite{overlap,ePIE09}. 
 
 These features are highly desirable in practice because an efficient  ptychographic scheme should not only collect
 as few data  as possible overall but also  deploy as few shifted masks as possible.
  
With this minimalist ptychographic scheme, we study
 analytically and numerically how $q$ and the nature of the mask affect ptychographic  reconstruction.

 \subsection{Main contributions}
 \label{sec:contribution}
 %Comparion with recent related literature
 
 The first result of the present work is the almost sure uniqueness of ptychographic solution with an independent random mask under
 the minimum overlap condition \eqref{2.7}  (Theorem \ref{thm:u} and Corollary \ref{cor:u}).

In this connection,  Iwen {\em et al.}  \cite{Iwen} proved a uniqueness theorem for the
ptychographic scheme  where all possible shifts of a damped and windowed Fourier transform are used, i.e.
the overlap percentage between two adjacent mask domains is at the maximum. In our notation, this amounts to
$n^2$ oversampled diffraction patterns totaling $(2m-1)^2n^2$ number of data. 
Their uniqueness theorem also holds with probability $1-\cO(\ln^{-2}{n}\ln^{-3}(\ln n))$ after randomly selecting
a subset of $\cO(n^2\ln^{2}{n}\ln^{3}(\ln n))$ data (assuming $m$ is at least poly-log in $n$).

In Section \ref{sec:ITA}, we establish local, geometric convergence (Theorem \ref{thm1}) for the alternating
projections (AP) and the Douglas-Rachford (DR) algorithms under the minimum overlap condition \eqref{2.7}. 
We also prove the uniqueness of the DR fixed point in the object domain (Proposition \ref{prop4})
and give an easily verifiable criterion for distinguishing the true solution among many AP fixed points  (Proposition \ref{prop:ap}).

In comparison, Wen {\em et al.} \cite{ADM} proposed alternating direction methods (ADM), including DR,  for ptychographic
reconstruction and demonstrated good numerical performance.
Hesse {\em et al.} \cite{Hesse} 
proved 
global convergence to a critical point for a proximal-regularized alternating minimization formulation of blind ptychography. Many critical points, however,  may co-exist and there is no easy way of distinguishing the true solution from the rest.
Neither of these papers establishes  uniqueness, convergence to the {\em true} solution or the geometric  sense of convergence. 

We also give a bound on the convergence rate of AP and DR for the minimalist scheme introduced 
in Section \ref{sec:scheme} (Proposition \ref{prop:gap}). The bound shows that the convergence rate
can deteriorate rapidly as $q$ becomes large, indicating that the best performing $q$ are in the small and
medium ranges. Our numerical experiments in Section \ref{sec:num} bear this prediction out nicely, focusing
on two kinds of masks: (independent or correlated) random masks and the Fresnel mask.

Finally we prove that for $q=2$, twin image exists in  ptychography with the Fresnel mask at certain values of the Fresnel number and causes the reconstruction error to spike (Propositions \ref{fo} and \ref{prop:twin}). 
As the Fresnel mask is most convenient to fabricate, this result gives a guideline for avoiding poor-performing
masks. 

We summarize our numerical findings  in the Conclusion (Section \ref{sec:con}).

\section{Uniqueness of ptychographic solution}\label{sec:u}

First we recall some basic results from {\em nonptychographic}  phase retrieval where the mask $\mu$  
and 
the unknown object $f$ have the same dimension.

The  $z$-transform
$$F(\bz) = \sum_{\bn} f(\bn) \bz^{-\bn}$$
of $f$ is a polynomial in $\bz^{-1}$ and 
can be 
factorized  uniquely into the product of
irreducible polynomials $F_k(\bz)$ and a monomial in $\bz^{-1}$  \beq
\label{temeq2}
F(\bz) = \alpha \bz^{-\bn_0} \prod_{k=1}^p F_k(\bz),
\eeq
where $\bn_0$ is a vector of nonnegative integers and
 $\alpha$ is a complex coefficient. 
 
% \commentout{

\begin{prop}

\label{prop1}\cite{Hayes}
Let the $z$-transform $F(\bz)$ of a finite complex-valued  array $\{f(\bn)\} $ %vanishing outside the region ${\mathbf 0}\leq \bn \leq \bN$
 be
given by
\beq
F(\bz)=\alpha \bz^{-\mbm} \prod_{k=1}^p F_k(\bz),\quad  \mbm\in \IN^d, \alpha\in \IC\label{21}
\eeq
where $F_k, k=1,...,p$ are 
nontrivial irreducible polynomials. Let $G(\bz)$ be
the $\bz$-transform of another finite array $g(\bn)$. 
%vanishing outside ${\mathbf 0}\leq \bn\leq \bN$.
Suppose 
$|F(\bw)|=|G(\bw)|,\forall \bw\in [0,1]^d$. Then $G(\bz)$ must have
the form
\beqn
G(\bz)=|\alpha| e^{\im \theta} \bz^{-\bp}
\lt(\prod_{k\in I} F_k(\bz)\rt)
\lt(\prod_{k\in I^c} F_k^*(1/\bz^*)\rt),\quad\bp\in \IN^d,\theta\in \IR
\eeqn
 where $I$ is a subset of $\{1,2,...,p\}$. 
\end{prop}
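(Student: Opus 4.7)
The plan is to convert the pointwise magnitude identity into a Laurent polynomial identity on the algebraic torus and then invoke unique factorization in the multivariate polynomial ring $\IC[\bz_1,\dots,\bz_d]$.

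First, I would recast $|F(\bw)| = |G(\bw)|$ on $[0,1]^d$ as a Laurent polynomial relation. Define the ``reciprocal conjugate'' Laurent polynomial by
\[
F^*(1/\bz^*) := \sum_{\bn} \overline{f(\bn)}\, \bz^{\bn},
\]
which, on the unit torus $\bz_j = e^{-\im 2\pi w_j}$, agrees with $\overline{F(\bw)}$. Thus the hypothesis gives
\[
F(\bz)\, F^*(1/\bz^*) \;=\; G(\bz)\, G^*(1/\bz^*)
\]
for all $\bz$ on the unit torus. Since both sides are Laurent polynomials and agree on the Zariski-dense real torus in $(\IC^\times)^d$, they agree as elements of the Laurent polynomial ring. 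Multiplying by a monomial $\bz^{\bn_*}$ of sufficiently large degree then promotes this to an identity in the polynomial ring $\IC[\bz_1,\dots,\bz_d]$, which is a UFD.

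Next, I would factor each side into irreducibles. Using \eqref{21}, the left-hand side becomes (up to a monomial and a nonnegative real constant $|\alpha|^2$) the product $\prod_{k=1}^p F_k(\bz)\,\prod_{k=1}^p F_k^*(1/\bz^*)$. I must verify that each $F_k^*(1/\bz^*)$ is again irreducible in $\IC[\bz]$ (the involution $F \mapsto F^*(1/\bz^*)$ is a ring automorphism up to a monomial unit, so it sends irreducibles to irreducibles), and that monomials and scalars are the only units. Writing $G(\bz) = \beta\, \bz^{-\bp}\prod_j G_j(\bz)$ with $G_j$ irreducible, the identity
\[
F(\bz)F^*(1/\bz^*) = G(\bz)G^*(1/\bz^*)
\]
combined with unique factorization forces each $G_j$ to coincide, up to a unit, with some $F_k$ or some $F_k^*(1/\bz^*)$. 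After grouping, this yields
\[
G(\bz) = \gamma\, \bz^{-\bp}\!\!\prod_{k\in I} F_k(\bz) \prod_{k\in I^c} F_k^*(1/\bz^*)
\]
for some index set $I\subseteq\{1,\dots,p\}$ and some constant $\gamma\in\IC$.

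Finally, I would pin down the constant $\gamma$. Evaluating $|F| = |G|$ at $\bw=0$ (or comparing any single Fourier coefficient of both squared magnitudes) determines $|\gamma| = |\alpha|$, leaving only an unresolved unimodular factor $e^{\im\theta}$. The exponent vector $\bp\in \IN^d$ is automatic once monomials are separated from nonunit factors. The main obstacle is the factorization step: one must handle the multivariate Laurent setting carefully, in particular showing that the map $H(\bz)\mapsto H^*(1/\bz^*)$ (together with clearing a monomial) preserves irreducibility and that no irreducible factor can split its mass between $F_k$ and $F_k^*(1/\bz^*)$ --- this is where the non-existence of common factors between an irreducible $F_k(\bz)$ and $F_k^*(1/\bz^*)$ for generic $F_k$, or the standard argument using unique factorization in $\IC[\bz_1,\dots,\bz_d]$, has to be invoked cleanly.
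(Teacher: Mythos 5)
The paper does not prove this proposition at all: it is quoted verbatim from Hayes \cite{Hayes} as a known result, so there is no in-paper argument to compare against. Your sketch is the standard proof of Hayes' theorem and is essentially sound: pass from $|F|=|G|$ on the torus to the Laurent-polynomial identity $F(\bz)F^*(1/\bz^*)=G(\bz)G^*(1/\bz^*)$ (Fourier coefficients of both sides agree), clear monomials, and match irreducible factors in the UFD $\IC[\bz]$, using that $H\mapsto H^*(1/\bz^*)$ composed with multiplication by a suitable monomial preserves irreducibility and that units are scalars (resp.\ scalar multiples of monomials in the Laurent ring). Your closing worry about a factor ``splitting its mass'' between $F_k$ and $F_k^*(1/\bz^*)$ is a non-issue: unique factorization forces each irreducible factor of $G$ to be an associate of some $F_k$ or some $F_k^*(1/\bz^*)$ outright, and the involution pairing $G$'s factors with those of $G^*$ ensures exactly one member of each pair $\{F_k, F_k^*(1/\bz^*)\}$ is assigned to $G$ (with repeated factors handled by giving them distinct indices in $\{1,\dots,p\}$); the modulus $|\gamma|=|\alpha|$ then follows since $|F_k^*(1/\bz^*)|=|F_k(\bz)|$ on the torus.
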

%}
{ {\bf Line object:}  $f$ is a {\em line object}  if the convex hull of  the object support in $\IR^d$  is a line segment.}

\begin{prop}\cite{unique}
Suppose $f$ is not a line object and let the mask $\mu$'s phase be continuously and independently distributed. 
%whose support touches  all the coordinate hyperplanes $\{n_j=0:j=1,\cdots,d\}$.
%Let $\{\mu^0(\bn)\}$ be continuous r.v.s on zero-containing real algebraic  varieties  $\{\cV(\bn)\}$ in $\IC (\simeq \IR^2)$ with an {\em absolutely continuous}  joint distribution with respect to the standard product measure on $\prod_{\bn\in \Sigma}\cV(\bn)$ where
%$\Sigma\subset \IN^d$ is the support set of $ \{f(\bn)\}$.  
Then with probability one the only irreducible factor of 
the $z$-transform of the masked object $\tilde f(\bn) =f(\bn) \mu(\bn)$ is a  monomial of $\bz^{-1}$. 
\label{prop2}
\end{prop}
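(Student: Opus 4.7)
The plan is to view the irreducibility of the $z$-transform of the masked object as a generic algebraic condition on the mask values $\{\mu(\bn)\}_{\bn \in \supp(f)}$, and then conclude that the exceptional (reducible) set has measure zero under the continuous product distribution of $\phi$.

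First, I would set up the algebraic geometry. Regard $\tilde F(\bz;\mu) = \sum_{\bn \in \supp(f)} f(\bn)\mu(\bn)\,\bz^{-\bn}$ as an element of the finite-dimensional coefficient space $V_f = \{c \in \IC^{\IZ^2}:\supp(c)\subseteq\supp(f)\}$. Suppose $\tilde F = G\cdot H$ with neither $G$ nor $H$ a monomial. By the Minkowski-sum property of Newton polytopes, $\supp(G)+\supp(H)\subseteq \conv(\supp f)$, and each has at least two lattice points. Only finitely many combinatorial ``factorization types'' $\tau=(\supp G,\supp H)$ are possible. For each type $\tau$, the set $W_\tau\subseteq V_f$ of coefficient tuples admitting such a factorization is the image under the polynomial-multiplication morphism and thus lies in a proper algebraic subvariety of $V_f$, provided one can show $W_\tau \ne V_f$.

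Second, I would verify $W_\tau\ne V_f$ by a dimension count that uses the hypothesis that $f$ is not a line object. Because $\conv(\supp f)$ is two-dimensional, a Minkowski decomposition $P_G+P_H\subseteq \conv(\supp f)$ with both summands non-degenerate satisfies
\[
|P_G\cap\IZ^2|+|P_H\cap\IZ^2|-1 \;<\; |(P_G+P_H)\cap\IZ^2|\le \dim V_f,
\]
where the $-1$ accounts for the scaling symmetry $(G,H)\mapsto(\lambda G,\lambda^{-1}H)$, and the strict inequality follows from the planarity of $P_G+P_H$ (whereas for a one-dimensional polytope one would get equality, explaining why line objects must be excluded). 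Hence the multiplication morphism cannot be dominant onto $V_f$, i.e.\ $W_\tau$ is a proper algebraic subvariety. Pulling back along the linear coordinate embedding $\mu\mapsto (f(\bn)\mu(\bn))_{\bn}$ (which is injective on $\supp(f)$ since every $f(\bn)\ne 0$ there), we obtain a proper algebraic subvariety $\widetilde W_\tau\subset\IC^{|\supp f|}$ cut out by a nonzero polynomial $P_\tau\in \IC[\mu(\bn):\bn\in\supp f]$.

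Third, I would close with the standard measure-theoretic argument. The polynomial $P_\tau$, restricted to the real torus $(S^1)^{|\supp f|}$ via $\mu(\bn)=e^{\im\phi(\bn)}$, vanishes on a proper real-analytic subset. Since the joint distribution of $(\phi(\bn))_{\bn \in \supp f}$ is absolutely continuous by the hypothesis that each $\phi(\bn)$ is an independent continuous random variable, this subset has probability zero. A union bound over the finite list of factorization types $\tau$ yields that, almost surely, $\tilde F$ admits no non-monomial factorization, so its only irreducible factor is a monomial in $\bz^{-1}$.

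The main obstacle is the strict dimension inequality in the second step. The count is delicate because $\supp(f)$ need not be convex or even match $\conv(\supp f)\cap\IZ^2$, so one must be careful that $V_f$ still has dimension strictly exceeding the parameter count of $(G,H)$; the cleanest route is to restrict to factorization types $\tau$ whose $P_G+P_H$ is actually realizable inside $\supp(f)$, and to exhibit in each case an explicit $\mu^\star$ (e.g., supported on three non-collinear points of $\supp f$ picked out by the not-a-line hypothesis) for which $\tilde F(\,\cdot\,;\mu^\star)$ fails the factorization equations, certifying $P_\tau\not\equiv 0$.
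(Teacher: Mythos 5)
First, a point of reference: the paper does not actually prove Proposition \ref{prop2} --- it is imported from \cite{unique} --- so your attempt is measured against that reference rather than an in-paper argument. Your overall strategy (finitely many factorization types; each type confined to the zero set of a nonzero polynomial $P_\tau$ in the mask values; the torus $\{|c(\bn)|=|f(\bn)|\}$ is a uniqueness set for polynomials because distinct characters are linearly independent; union bound) is the right one and is in the spirit of \cite{unique} and of the Hayes--McClellan result on reducible multivariate polynomials that underlies it. The third step and the union bound are fine.

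The gap is in Step 2, and it is genuine. Since $GH=\tilde F$ and every $\mu(\bn)\neq0$, we have $\supp(GH)=\supp(f)$, hence $\supp(f)\subseteq(P_G+P_H)\cap\IZ^2$; your inequality $|(P_G+P_H)\cap\IZ^2|\le\dim V_f=|\supp(f)|$ therefore points the wrong way. Worse, the quantity you need to dominate, $|\supp G|+|\supp H|-1$, can exceed $|\supp f|$ even for non-line objects because of cancellation in the product: take $G=(1+x)(1+y)$ and $H=1-x$, so that $GH=1+y-x^2-x^2y$ is supported on the four non-collinear points $(0,0),(0,1),(2,0),(2,1)$ while the factors carry $4+2=6$ free coefficients. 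Thus the unrestricted multiplication morphism can have image of dimension exceeding $\dim V_f$, and properness of $W_\tau$ must come from the cancellation constraints (vanishing of the product's coefficients outside $\supp f$) cutting down the source --- exactly the step your count skips, and the place where the not-a-line hypothesis must actually do its work (for a line object reducibility is automatic since univariate polynomials split over $\IC$; your ``strict versus non-strict lattice-point inequality'' is not the operative mechanism). In the example above the conclusion is still true ($W_\tau$ is the hypersurface $ad=bc$), but that is obtained by solving the constraint equations, not by parameter counting. Finally, your fallback of exhibiting one irreducible specialization $\mu^\star$ only shows $W_\tau\neq V_f$ as sets; since $W_\tau$ is a priori merely constructible, this is compatible with $W_\tau$ containing a dense Zariski-open subset of $V_f$, in which case no nonzero $P_\tau$ vanishing on it exists. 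To close the argument you need either (i) to show $W_\tau$ is Zariski-closed in the relevant stratum of $V_f$ (e.g.\ by projectivizing the multiplication map and using completeness), after which a single irreducible witness supported in $\supp(f)$ does suffice, or (ii) an honest dimension bound on the incidence variety $\{(G,H):\supp(GH)\subseteq\supp(f)\}$. Neither is supplied, so the proof is incomplete as written.
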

%{\red Give an alternative, simpler proof.}

The following uniqueness theorem is our first theoretical result.

\begin{thm} 
\label{thm:u} 
Suppose that the assumptions of Proposition \ref{prop2} hold and that
\[
\lt|\cM^0\cap \cM^\bt\cap \supp{(f)}\rt|\geq 2.% contains at least two points. 
\]  
 Then with probability one $f$ is uniquely determined,  up to a global phase factor, by the measurement data $b=|A^*f|$. 
%The only exception is when the object $f$ has sparsity 1 and the random illuminations are purely real-valued. 

\end{thm}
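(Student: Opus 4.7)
The plan is to reduce the theorem, via Propositions~\ref{prop1} and~\ref{prop2}, to a statement about the probability that certain algebraic identities in the independent continuous random mask phases can hold, and then to rule those identities out using the two-point overlap hypothesis~\eqref{2.3}.

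First I would apply Proposition~\ref{prop2} to each of the two masked pieces $f^0\mu^0$ (as an array on $\cM^0$) and $f^\bt\mu^\bt$ (as an array on $\cM^\bt$). With probability one, the $\bz$-transform of each masked piece then has only a monomial irreducible factor. (A minor technical point is that the restrictions of $f$ to the individual patches $\cM^0$ and $\cM^\bt$ must themselves be non-line objects; this genericity is easy to fold into the probability-one statement.) By Proposition~\ref{prop1} applied to each patch, any alternative $g$ producing the same magnitude data on both patches must on each patch either (T) agree with $f^t\mu^t$ up to a global phase $e^{\im\alpha_t}$, or (C) agree with a conjugate reflection of $f^t\mu^t$ through some admissible center $\bar{\bn}^{\,t}$, again up to a global phase.

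Next, I would examine the four combinations (T,T), (T,C), (C,T), (C,C) of the two patch-wise alternatives. The case (T,T) yields $g = e^{\im\alpha_0}f$ on $\cM^0$ and $g = e^{\im\alpha_\bt}f$ on $\cM^\bt$; consistency at any single common support point forces $\alpha_0 = \alpha_\bt$, which is the admitted global-phase ambiguity. In each of the three remaining cases, evaluating the two patch-relations at the two guaranteed overlap support points $\mbm_1$ and $\mbm_2$ gives two coupled equations in the phases $e^{\im\alpha_0}$ and $e^{\im\alpha_\bt}$. Eliminating these by taking the ratio of the two equations produces a multiplicative identity among the values of $f$ and the mask phases $\phi(\cdot)$ at a finite set of positions determined by $\mbm_1$, $\mbm_2$ and the relevant reflection center.

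Since the phases $\{\phi(\bn)\}$ are independent and continuously distributed, any such identity that is genuinely nontrivial as an analytic relation in them is satisfied with probability zero. A union bound over the finitely many admissible reflection centers $\bar{\bn}^{\,t}$ (constrained by the requirement that the reflected support lie in $\cM^t$) and over the finitely many choices of overlap pairs $(\mbm_1,\mbm_2)\in \cM^0\cap\cM^\bt\cap\supp(f)$ then preserves the probability-one conclusion, leaving only the trivial (T,T) ambiguity. The main obstacle is the analysis of the degenerate configurations where the reflection carries one overlap point to the other (e.g.\ $\bar{\bn}^{\,t}-\mbm_1=\mbm_2$) or fixes one of them: in such cases many of the mask phases in the identity coincide and the identity can collapse to a condition involving only $|f|$. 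To handle these cases one combines the non-line-object hypothesis with the geometric restriction on $\bar{\bn}^{\,t}$ imposed by $\supp(f^t\mu^t)\subset\cM^t$, and, if needed, appeals to further overlap support points beyond the two guaranteed ones to re-derive a nontrivial identity in the random phases. This careful case analysis is where the bulk of the work in the full proof sits.
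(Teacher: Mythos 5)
Your overall strategy coincides with the paper's: apply Propositions \ref{prop1} and \ref{prop2} patch-wise, run the four-way crossover analysis on $\cM^0\cap\cM^\bt$, kill the nontrivial cases using independence and continuity of the mask phases, and reserve the two-point overlap hypothesis for the doubly-reflected case. However, two points in your outline are genuine gaps rather than omitted routine detail. First, your alternative (T) is too narrow: Proposition \ref{prop1} allows the monomial factor $\bz^{-\bp}$, so on each patch the untwinned alternative is $g(\bn)=e^{\im\theta_t}\,f^t(\bn+\mbm_t)\mu^t(\bn+\mbm_t)/\mu^t(\bn)$ for an arbitrary integer shift $\mbm_t$, not merely a global phase times $f^t$. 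The paper carries these shifts through all four scenarios (its \eqref{1.6}--\eqref{1.7}), and the (T,T) case is resolved precisely by showing that the four mask terms $\ln\mu^0(\bn+\mbm_1)$, $\ln\mu^0(\bn-\bt)$, $\ln\mu^0(\bn-\bt+\mbm_2)$, $\ln\mu^0(\bn)$ cannot cancel unless $\mbm_1=\mbm_2=0$ (using $\bt\neq 0$); only then does the global-phase conclusion follow. Your (T,T) analysis jumps straight to $g=e^{\im\alpha_t}f$ and never rules out these shifted impostors; your own machinery (evaluate at overlap points, eliminate the free phases, invoke independence) would close this, but it must be done.

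Second, in the (C,C) case you correctly flag the dangerous degeneration but do not close it under the stated hypothesis: you propose appealing to ``further overlap support points beyond the two guaranteed ones,'' which the theorem does not provide. The paper shows two points suffice: cancellation of the four mask terms in the doubly-reflected identity forces $\mbm_2=2\bn-\bN+\bt$ and $\mbm_1=2\bn-\bN-\bt$, which determines $\bn$ uniquely for given $\mbm_1,\mbm_2$, so with two overlap support points at least one yields a non-degenerate identity in independent continuous phases, which fails almost surely. A smaller quibble: the non-line-object property of the restrictions $f^0,f^\bt$ is a deterministic hypothesis on the object and cannot be ``folded into the probability-one statement,'' which concerns only the mask; the paper simply assumes it through the hypotheses of Proposition \ref{prop2}. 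On the positive side, your elimination of the unknown patch phases by taking ratios at the two overlap points before invoking the probability-zero argument is arguably cleaner than the paper's pointwise treatment, which leaves the role of the free constants $\theta_1,\theta_2$ implicit.
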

\begin{proof}
Let  $g(\bn)$ be another array that vanishes  outside $\cM^0\cup \cM^\bt$  and produces the same masked Fourier magnitude data. By Proposition \ref{prop1} and \ref{prop2}, $g$ has the following possibilities: In $\cM^0$, $g$ has two alternatives 
\beq
\label{1.6}
g(\bn)&=&\lt\{\begin{matrix} e^{\im \theta_1} f^0(\bn + \mbm_1)\mu^0(\bn + \mbm_1)/\mu^0(\bn) \\
e^{\im \theta_1} \bar f^0(\bN-\bn + \mbm_1)\bar \mu^0(\bN-\bn + \mbm_1)/\mu^0(\bn),
\end{matrix}\rt. \quad\forall \bn \in \cM^0
\eeq
and
\beq
\label{1.7}
g(\bn)&=&\lt\{\begin{matrix} e^{\im \theta_2} f^\bt(\bn + \mbm_2)\mu^\bt(\bn + \mbm_2)/\mu^\bt(\bn) \\
e^{\im \theta_2} \bar f^\bt(\bN-\bn + \mbm_2)\bar\mu^\bt(\bN-\bn + \mbm_2)/\mu^\bt(\bn),
\end{matrix}\rt. \quad\forall \bn \in \cM^\bt
\eeq
for  some $\mbm_1,\mbm_2\in \IZ^d, \theta_1,\theta_2\in \IR$. 

We now focus on the intersection $\cM^0\cap \cM^\bt$ where \eqref{1.6} and \eqref{1.7} are
both defined. We have then four scenarios from the crossover of the alternatives in \eqref{1.6} and \eqref{1.7}.  

First of all, if, for all $\bn\in \cM^0\cap \cM^\bt$,
\beq\label{1.8}
g(\bn)&=&e^{\im \theta_1} f^0(\bn + \mbm_1 )\mu^0(\bn + \mbm_1)/\mu^0(\bn)\\
&=&e^{\im \theta_2} f^\bt(\bn + \mbm_2)\mu^\bt(\bn + \mbm_2)/\mu^\bt(\bn) \nn
\eeq
then
\beq
\label{1.9}
e^{\im\theta_1}f^0(\bn + \mbm_1 )\mu^0(\bn + \mbm_1)/\mu^0(\bn)
=e^{\im \theta_2}  f^\bt(\bn + \mbm_2 )\mu^0(\bn-\bt + \mbm_2)/\mu^0(\bn-\bt).
\eeq
Clearly, $f(\bn + \mbm_1 )$ and $f^\bt(\bn + \mbm_2 )$ must 
simultaneously be zero or nonzero. When they are nonzero,   we obtain by taking logarithm on both sides \beq
\label{1.10}
\lefteqn{\im\theta_1+ \ln{f^0(\bn+\mbm_1)}+\ln{\mu^0(\bn+\mbm_1)}+\ln{\mu^0(\bn-\bt)}}\\
&=&\im\theta_2+\ln{f^\bt(\bn+\mbm_2)}+\ln{\mu^0(\bn-\bt+\mbm_2)} +\ln{\mu^0(\bn)}\nn
\eeq
which holds up to a multiple of $2\pi$. 
The four random variables
\[
\ln{\mu^0(\bn+\mbm_1)},~\ln{\mu^0(\bn-\bt)},~\ln{\mu^0(\bn-\bt+\mbm_2)},~\ln{\mu^0(\bn)}
\]
can not cancel one another
 unless either $\mbm_1=\mbm_2=0$ or $(\bt=0~\&~\mbm_1=\mbm_2)$. 
When the continuous random variables do not cancel one another,  \eqref{1.10} fails to hold true almost surely. 

On the other hand, for $\mbm_1=\mbm_2=0$ (since $\bt\neq 0$), it follows from \eqref{1.8} that 
\[
g(\bn)=e^{\im\theta_1}f^0(\bn  )
=e^{\im \theta_2}  f^\bt(\bn),\quad\bn\in \cM^0 \cap \cM^\bt.
\]
Since $f^0(\bn)=f^{\bt}(\bn)$, we have  
$\theta_1=\theta_2$. It follows then from \eqref{1.6}-\eqref{1.7} that 
$g=e^{\im\theta_1}(f^0\cup f^\bt)$. 

The other three scenarios can be similarly dealt with.
Consider the next scenario where for $\bn\in \cM^0\cap \cM^\bt$ 
\beqn
g(\bn)&=&e^{\im \theta_1} f^0(\bn + \mbm_1 )\mu^0(\bn + \mbm_1)/\mu^0(\bn)\\
&=&e^{\im \theta_2} \bar f^\bt(\bN-\bn + \mbm_2)\bar \mu^\bt (\bN-\bn + \mbm_2)/\mu^\bt(\bn).
\eeqn
Taking logarithm and rearranging terms we have
\beq\label{1.12}
\lefteqn{\im\theta_1+ \ln{f^0(\bn+\mbm_1)}+\ln{\mu^0(\bn+\mbm_1)}+\ln{\mu^0(\bn-\bt)}+\ln{\mu^0(\bN-\bn-\bt+\mbm_2)}}\\
&=&\im\theta_2+\ln{\bar f^\bt(\bN-\bn+\mbm_2)} +\ln{\mu^0(\bn)}\nn
\eeq
The four random variables
\[
\ln{\mu^0(\bn+\mbm_1)},~\ln{\mu^0(\bn-\bt)},~\ln{\mu^0(\bN-\bn-\bt+\mbm_2)},~\ln{\mu^0(\bn)}
\]
can not cancel one another
since $\bt\neq 0$. As a result,  \eqref{1.12} holds true with probability zero.

The argument for ruling out the  third scenario  
\beqn
g(\bn)&=& e^{\im \theta_1} \bar f^0(\bN-\bn + \mbm_1)\bar \mu^0(\bN-\bn + \mbm_1)/\mu^0(\bn)\\
&=&e^{\im \theta_2} f^\bt(\bn + \mbm_2)\mu^\bt(\bn + \mbm_2)/\mu^\bt(\bn)  
\eeqn
is the same as for the second scenario.

Now consider the fourth scenario 
\beqn
\nn
g(\bn)&=&e^{\im \theta_1} \bar f^0(\bN-\bn + \mbm_1)\bar \mu^0(\bN-\bn + \mbm_1)/\mu^0(\bn)\\
&=& e^{\im \theta_2} \bar f^\bt (\bN-\bn + \mbm_2)\bar \mu^\bt (\bN-\bn + \mbm_2)/\mu^\bt(\bn)\eeqn
which after taking logarithm and rearranging terms becomes
 \beq
\label{1.13}
\lefteqn{\im\theta_1+ \ln{\bar f^0(\bN-\bn+\mbm_1)}+\ln{\mu^0(\bN-\bn-\bt+\mbm_2)}+\ln{\mu^0(\bn-\bt)}}\\
&=&\im\theta_2+\ln{\bar f^\bt(\bN-\bn+\mbm_2)}+\ln{\mu^0(\bN-\bn+\mbm_1)} +\ln{\mu^0(\bn)}.\nn
\eeq
Since $\bt\neq 0$, the four random variables
\[
\ln{\mu^0(\bN-\bn-\bt+\mbm_2)},~\ln{\mu^0(\bn-\bt)},~\ln{\mu^0(\bN-\bn+\mbm_1)},~\ln{\mu^0(\bn)}
\]
cancel one another  only when 
\beqn
\bN-\bn-\bt+\mbm_2&=&\bn\\
\bn-\bt&=&\bN-\bn+\mbm_1
\eeqn
or equivalently
\beqn
\mbm_2&=&2\bn-\bN+\bt\\
\mbm_1&=&2\bn-\bN-\bt
\eeqn
\commentout{
\beq
{\mbm_2-\mbm_1\over 2}&=& \bt\\
{\mbm_2+\mbm_1\over 2} &=& 2\bn-\bN
\eeq
}
which can not hold true simultaneously for more than one $\bn$ for any given $\mbm_1,\mbm_2$. 
This is ruled out by the assumption that $\cM^0\cap \cM^\bt\cap \supp(f)$ contains at least two points. 

In summary, the only possibility is  that 
\[
g=e^{\im\theta}(f^0\cup f^\bt)=e^{\im \theta} f
\]
for some $\theta\in \IR$, which is what we set out to prove.

\end{proof}

The divide-overlap-and-conquer strategy is readily extendable to the multi-part setting.

\begin{cor}
\label{cor:u}
 Consider the multi-part ptychography \eqref{1.5}-\eqref{2.6}. Suppose that the assumptions of Proposition \ref{prop2} hold and 
 that for every $\cM^{\bt_1}\cap\supp(f)\neq \emptyset, \bt_1\in \cT$ there is another $\bt_2\in \cT, \bt_2\neq \bt_1,$ such that 
\beq\label{two2}
\lt|\cM^{\bt_1}\cap \cM^{\bt_2}\cap \supp(f)\rt|\geq 2. %\quad \hbox{ contains at least two points}.
\eeq 
Then with probability one $f$ is determined uniquely, up to a constant phase factor for each connected component of $f$, by
the ptychographic data 
\beq
\label{1.19}
\lt\{\lt|\Phi^\bt(\mu^\bt\odot f^{\bt})\rt|:\bt\in \cT\rt\}.
\eeq

The constant phase factor becomes global, i.e. the same one for the whole object, if
\beq
\label{connect}
\bigcup \lt\{ \cM^\bt:\cM^\bt\cap f\neq\emptyset, \bt\in \cT\rt\}\quad\mbox{is a connected set}. 
\eeq
\end{cor}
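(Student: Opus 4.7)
The plan is to reduce the multi-part statement to repeated invocations of Theorem \ref{thm:u} applied to well-chosen pairs of shifts, and then to propagate a single phase constant along an overlap graph.

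First, I would build the following graph $\cG$: take as vertices all $\bt \in \cT$ with $\cM^{\bt}\cap\supp(f)\neq\emptyset$, and draw an edge between $\bt_1,\bt_2$ whenever $|\cM^{\bt_1}\cap \cM^{\bt_2}\cap\supp(f)|\geq 2$. The hypothesis \eqref{two2} guarantees that every vertex has positive degree, so $\cG$ has no isolated vertices. Moreover, I would argue that two vertices $\bt,\bt'$ lie in the same connected component of $\cG$ if and only if the associated pieces $f^\bt, f^{\bt'}$ can be joined through a chain of pairwise overlapping masks, which in turn forces them (up to discarding illuminations that miss the object) to lie in the same connected component of $\supp(f)$ as a subset of $\bigcup_{\bt} \cM^\bt$. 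This step needs care because the connectivity of the object must be read off from the combinatorics of overlaps.

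Second, let $g$ be any other array reproducing the ptychographic data \eqref{1.19}. For each edge $\{\bt_1,\bt_2\}$ of $\cG$, Theorem \ref{thm:u} applied to the sub-object $f^{\bt_1}\cup f^{\bt_2}$ with the two masks $\mu^{\bt_1},\mu^{\bt_2}$ yields, almost surely, a phase $\theta(\bt_1,\bt_2)\in\IR$ such that
\[
g\big|_{\cM^{\bt_1}\cup \cM^{\bt_2}} \;=\; e^{\im\theta(\bt_1,\bt_2)}\,\bigl(f^{\bt_1}\cup f^{\bt_2}\bigr).
\]
Taking a countable union over the (finite) edge set, this holds simultaneously for all edges with probability one.

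Third, I would propagate the phase along any path $\bt_1 - \bt_2 - \bt_3$ in $\cG$: on $\cM^{\bt_2}\cap\supp(f)$ the two identities
$g = e^{\im\theta(\bt_1,\bt_2)} f$ and $g = e^{\im\theta(\bt_2,\bt_3)} f$ must agree, and since this intersection is nonempty (it contains $f^{\bt_2}$'s support, which is nontrivial by the degree-$\geq 1$ property), we get $\theta(\bt_1,\bt_2)=\theta(\bt_2,\bt_3)$. Iterating along edges shows that $\theta$ is constant on each connected component of $\cG$, and hence on each connected component of the object. If the global connectivity condition \eqref{connect} holds, $\cG$ is connected and the single constant phase becomes global.

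The main obstacle I expect is the bookkeeping in the propagation step — specifically, making sure the almost-sure exceptional set from each pairwise application of Theorem \ref{thm:u} is absorbed into one global null set (this is fine because $\cT$ is finite), and verifying that the combinatorial connectivity of $\cG$ really does match the topological connectivity of $\supp(f)$ inside $\bigcup_\bt \cM^\bt$, so that ``constant phase per component of $\cG$'' translates faithfully to ``constant phase per connected component of $f$.''
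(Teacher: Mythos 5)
Your proposal is correct and follows essentially the same route as the paper: apply the two-mask uniqueness theorem (Theorem \ref{thm:u}) to each overlapping pair satisfying \eqref{two2} and then glue the resulting phase constants across shared masks, with \eqref{connect} forcing a single global constant. The paper's own proof is just a terser version of this; your explicit overlap graph, the finite union bound over null sets, and the phase-propagation step along paths are exactly the details the paper leaves implicit.
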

\begin{rmk}
Clearly the result still holds when some of the shifted masks do not intersect with the object. This has
a practical relevance as  the support of the object is often not precisely known and some illuminations can
totally miss the object. Of course, these illuminations produce no useful information and should be discarded.

When the condition \eqref{connect} fails, the whole ptychographic problem breaks down into a set of
separate independent subproblems, each with the data corresponding to a connected component of 
$\bigcup \lt\{ \cM^\bt:\cM^\bt\cap f\neq\emptyset, \bt\in \cT\rt\}$. 

\end{rmk}

\begin{proof}
Let  $\bt_1,\bt_2\in \cT$ be any pair of shifts satisfying the overlapping property \eqref{two2}. Then by Theorem \ref{thm:u}, $f^{\bt_1}\cup f^{\bt_2}$ is uniquely determined, up to a constant phase factor, by the data 
\[
\lt\{\lt|\Phi^{\bt_j}(\mu^{\bt_j}\odot f^{\bt_j})\rt|: j=1,2\rt\} 
\]
with probability one where $\odot$ denotes the Hadamard (i.e. componentwise) product. 
Since $f$ is the union of all such pairs $f^{\bt_1}\cup f^{\bt_2}$, $f$ is uniquely determined, up to a constant phase factor for each connected component of $f$, by the data \eqref{1.19}, with probability one. 

The constant phase factor ambiguity for every connected component may not be the same  since some masks may have no intersection with the object. Once condition \eqref{connect} is valid, 
the constant phase factor must be the same for the whole object. 
\end{proof}

\section{Fixed point algorithms}\label{sec:ITA}

To describe the reconstruction algorithms, it is most convenient to resort to the vector-matrix notation
where we use $\IC^{N}$ ($N=$ the total number of pixels in the object = $n^2$) as the object space and $\IC^{M}$ ($M=$ the total number of measurement data $=m^2$) as the the data space
{\em before} taking the modulus of the diffracted field. We use $\|\cdot\|$ to denote
the vector norm as well as the Frobenius norm when the object is written as a matrix. 

A phase-masked measurement gives rise to an  {\em isometric} matrix in
the non-ptychographic setting
\beq
\label{one}
\hbox{\rm (1-pattern nonptychographic matrix)}\quad A^*= c\Phi\,\, \diag\{\mu\},
\eeq
where the constant $c$ is chosen to normalize $A^*$ such that $AA^*=I$. 
The 2-pattern ptychography matrix $A^*$  can be written as 
\beq \label{two}\hbox{(2-pattern ptychography matrix)}\quad 
A^*=c \lt[\begin{matrix}
\Phi^0~\diag\{\mu^0\} &0 \\
0&\Phi^\bt ~\diag\{\mu^\bt\}
\end{matrix}\rt]
\eeq
where the first and second mask domains overlap due to the nature of a ptychographic scheme. 

The propagation matrix $A^*$ for multi-part ptychography is constructed analogous to \eqref{two} by
 stacking $\Phi^\bt~\diag\{\mu^\bt\},~\forall \bt \in \cT$ in the proper order. 
%has orthogonal columns but since $\cM\cap T\cM\not = \emptyset$ and the common columns shared by the two blocks generally have a greater norm than the columns specific to one block. We can fix this problem by the decomposition $A^*=\tilde A^* D$ where$D$ is the diagonal matrix with diagonal entries being the norms of the columns of $A^*$.  As a result $\tilde A^*$ is isometric. 
For  algorithmic analysis, we normalize  
the columns of  $A^*$ so that $A^*$ is isometric.

Let $b\equiv |A^* f| \in \IR^M$. 
 For any $y\in \IC^M$, $\sgn(y)\in \IC^M$ is defined as 
\[
\sgn(y)[j]=\lt\{\begin{matrix} 
1&\mbox{if $y[j]=0$}\\
y[j]/|y[j]|&\mbox{else. }
\end{matrix}\rt.
\]

Ptychography  can be formulated   as the following feasibility problem 
in the Fourier domain 
 \beq
 \label{feas}
\hbox{Find}\quad  \hat y\in  A^*\cX \cap \cY,\quad \cY:=  \{y\in \IC^M: |y|=b\}. 
 \eeq

Let $P_1$ be the projection onto $A^*\cX$ and $P_2$ the projection onto $\cY$:
\beq
\label{proj}
P_1 y=A^*Ay,\quad P_2 y=b\odot{\sgn(y)}. 
\eeq

The following are two of the most widely used iterative algorithms for solving feasibility problems. 
\begin{itemize}
\item[\bf Alternating projections]

 \begin{equation}\label{papf}
y^{(k+1)}=P_1P_2 y^{(k)}%A\lt(b\odot \sgn{(A^* x^{(k)})} \rt). 
\end{equation}

\item[\bf Douglas-Rachford  algorithm]

%Let $z=A^*x\in \IC^M$.  
 
% \subsection{ Fourier-domain Douglas-Rachford algorithm}\label{sec:dr}

%The Fourier-domain DR takes the form  $y^{(k+1)}=S_{\rm f}(y^{(k)})$ with
\begin{eqnarray}
y^{(k+1)}&=&y^{(k)}+P_1 (2P_2 - I) y^{(k)}- P_2 y^{(k)}% A^*A \lt(2b\odot \sgn({\z})-\z\rt)-b\odot \sgn({\z}) 
\label{fdr}
\end{eqnarray}
% Note that $\fdr(y)$ is differentiable  at $y$ if  and only if $\supp(b)\subseteq\supp(y)$. 
\end{itemize}

As the final output of either algorithm, the object estimate is given by $x^{(k)}=Ay^{(k)}$. 

As we discuss below, AP and DR have their respective strengths and weaknesses and we will combine their strengths in ptychographic reconstruction. 

\subsection{Fixed point}

To accommodate the arbitrariness of the phase of zero components, we call  $y_*$ a {\em Fourier-domain DR fixed point} if there exists 
\[
u\in U=\{
u=(u[i])\in \IC^M: |u[i]|=1,\,\, \forall i\}\
\] satisfying  \beq
\label{fix2}
u\in U,\quad u[j]=1,\quad \hbox{whenever}~y_*[j]\neq 0
\eeq
such that the DR fixed point equation holds
%$P_2(2P_1-I)y_*=P_2 y_*$
\beq
\label{fixed}
A^*A \lt(2b\odot \sgn({\z_*})\odot u-\z_*\rt)=b\odot \sgn({\z_*}) \odot u.\label{fdru}
\eeq 
Note that  if  the sequence  of iterates $y^{(k)}$ converges a limit $y_\infty$ that has no zero component, then the limit $y_\infty$ is a Fourier domain DR fixed point with $u\equiv 1$. 

Let $ x_*=Ay_* $ be the corresponding object-domain fixed point. 
Define another object estimate 
\beq
\hat x&=&A\left(2b\odot \sgn(y_*)\odot u-y_*\right)%=2 A (b\odot \sgn(y_*)\odot u)-x_*,
\label{14}\eeq
for some $u$ satisfying \eqref{fix2}.  

We have from \eqref{fixed} 
\beq
%BA^* \hat x-B(b\odot \omega_*)=0
A^* \hat x= b\odot \sgn(y_*) \odot u\eeq
which implies \beq
|A^* \hat x|&=& |A^* f|\label{15}\\
\arg( A^* \hat x) &=&\arg(\sgn(y_*)\odot u) \quad\hbox{on}~\supp(b).\label{16}
\eeq

\begin{prop} \label{prop4}
Under the assumptions of Corollary \ref{cor:u} including \eqref{connect},  $\hat x=x_*=e^{\im\theta}f$ for some constant $\theta\in \IR$ almost surely. 

\end{prop}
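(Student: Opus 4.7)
The plan is to reduce the proposition to an application of the uniqueness Corollary \ref{cor:u}. Concretely, I aim to show (i) $\hat x = x_*$ and (ii) $|A^*\hat x|=b$; the corollary will then force $\hat x = e^{\im \theta} f$ almost surely, which together with (i) proves the claim.

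Both (i) and (ii) should follow directly from the DR fixed point equation \eqref{fdru} together with the isometry $AA^* = I$. Setting $\eta := 2b\odot\sgn(y_*)\odot u - y_*$, the construction \eqref{14} gives $\hat x = A\eta$, and \eqref{fdru} rewrites as $A^*A\eta = b\odot\sgn(y_*)\odot u$, i.e.
\[
A^*\hat x = b\odot\sgn(y_*)\odot u.
\]
Since $|u[j]|=1$, this yields $|A^*\hat x| = b$, which is (ii). Applying $A$ to both sides of the displayed identity and invoking $AA^* = I$ produces $\hat x = A(b\odot\sgn(y_*)\odot u)$; on the other hand, linearity gives
\[
\hat x = A\eta = 2A(b\odot\sgn(y_*)\odot u) - Ay_* = 2\hat x - x_*,
\]
so $x_* = \hat x$, which is (i). Equivalently, $b\odot\sgn(y_*)\odot u - y_*$ lies in $\ker A$, which is what makes the object-domain vector $Ay_*$ well-defined independently of the specific Fourier-domain preimage.

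With (i) and (ii) in hand, Corollary \ref{cor:u} (using the connectivity hypothesis \eqref{connect}) applied to $\hat x$ gives $\hat x = e^{\im\theta}f$ for a single $\theta\in\IR$ with probability one. Here $\hat x$ is a legitimate candidate for the corollary because it lies in the range of $A$, whose columns are supported in the mask domains $\cM^\bt$, so $\supp(\hat x)\subseteq \bigcup_{\bt\in\cT}\cM^\bt$. The only non-routine element of the argument is the algebraic collapse $\hat x = 2\hat x - x_*$; this is what forces the DR fixed point to be unique \emph{in the object domain} up to a global phase, in stark contrast with AP, for which many spurious object-domain fixed points can coexist and a separate criterion (Proposition \ref{prop:ap}) is needed to single out the true one.
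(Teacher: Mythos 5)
Your proof is correct, and while it uses the same ingredients as the paper's (the fixed point equation \eqref{fdru}, the isometry $AA^*=I$, and Corollary \ref{cor:u} applied to $|A^*\hat x|=b$), you factor the argument differently in a way worth noting. The paper first invokes uniqueness to get $\hat x=e^{\im\theta}f$ from \eqref{15}, then uses the phase identity \eqref{16} to deduce $e^{\im\theta}\sgn(A^*f)=\sgn(y_*)\odot u$ on $\supp(b)$, and only then substitutes back into \eqref{14} and uses $A(b\odot\sgn(A^*f))=AA^*f=f$ to collapse $e^{\im\theta}f=2e^{\im\theta}f-x_*$. You instead prove $\hat x=x_*$ \emph{before} touching uniqueness: applying $A$ to $A^*\hat x=b\odot\sgn(y_*)\odot u$ and using $AA^*\hat x=\hat x$ gives $\hat x=A(b\odot\sgn(y_*)\odot u)$, whence $\hat x=A\eta=2\hat x-x_*$. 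This is the same algebraic collapse, but your ordering shows that the identity $\hat x=x_*$ is a purely deterministic consequence of the fixed point equation and isometry, valid for any mask and any DR fixed point, with the randomness entering only in the final identification with $e^{\im\theta}f$; the paper's route entangles the two. Your remark that $\hat x$ lies in the range of $A$ and hence is supported in $\bigcup_{\bt\in\cT}\cM^{\bt}$, so that Corollary \ref{cor:u} legitimately applies, addresses a point the paper leaves implicit. No gaps.
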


\begin{proof}
%Since the random mask is continuously distributed, so is $b$  and hence $b>0$ almost surely. 

By Theorem \ref{thm:u} and Corollary \ref{cor:u},  (\ref{15}) implies that
$\hat x=e^{\im \theta}f$ for some constant $\theta\in \IR$. To complete the proof, we only need to  show $e^{\im \theta} f=x_*$.\\

By \eqref{16} and the identity $\hat x=e^{\im \theta}f$, we have
\beq
\label{100}
e^{\im \theta}\sgn(A^*f)=\sgn(y_*)\odot u \quad\hbox{on}~\supp(b). 
\eeq
%since $b>0$ almost surely.
Substituting \eqref{100} into \eqref{14} we obtain
\[
e^{\im \theta} f=A\left(2b\odot e^{\im \theta}\sgn(A^*f)-y_*\right)= 2 e^{\im\theta} A\left(b\odot\sgn(A^*f)\rt)-Ay_*= 2e^{\im \theta} f-x_*
\]
where the last identity follows from the isometry of $A^*$ and the definition of $x_*$.
Hence  $e^{\im \theta} f=x_*$ as claimed. 
\end{proof}

Likewise, we call $x_*$ an AP fixed point  if  for some $u\in U$
 \beq
x_*=\label{3.7.2}
A\lt(b\odot  u \odot \sgn({A^*x_*})\rt). 
 \eeq 
The following result identifies  any AP limit point   
with an AP fixed point. 

\begin{prop}\label{Cauchy} Under the assumptions of Corollary \ref{cor:u}, every limit point  of AP iterates $\{x^{(k)}\} $  is an AP fixed point in the sense \eqref{3.7.2}.
\end{prop}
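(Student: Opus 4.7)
The plan is to combine two ingredients familiar from the analysis of alternating projection methods. First I would establish asymptotic regularity, i.e.\ $y^{(k+1)} - y^{(k)} \to 0$, by exploiting the fact that $P_1$ is an orthogonal projector onto the subspace $A^*\cX$. Second, I would pass to the limit in the one-step recursion $x^{(k+1)} = A(b\odot \sgn(A^* x^{(k)}))$ along any subsequence $x^{(k_j)} \to x_*$, absorbing the discontinuity of $\sgn$ at the zero coordinates of $A^* x_*$ into the auxiliary unimodular vector $u\in U$ appearing in the fixed-point equation \eqref{3.7.2}.

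For the asymptotic regularity step, note that $y^{(k)}=A^*A(P_2 y^{(k-1)}) \in A^*\cX$ for every $k\ge 1$. Because $P_1=A^*A$ is the orthogonal projector onto the closed subspace $A^*\cX$, the residual $P_2 y^{(k)} - y^{(k+1)}$ is orthogonal to the chord $y^{(k)} - y^{(k+1)}$, yielding the Pythagorean identity
\[
\|y^{(k)} - P_2 y^{(k)}\|^2 = \|y^{(k)} - y^{(k+1)}\|^2 + \|y^{(k+1)} - P_2 y^{(k)}\|^2.
\]
Since $P_2 y^{(k)}\in\cY$ and $P_2 y^{(k+1)}$ is a nearest point of $\cY$ to $y^{(k+1)}$, one also has $\|y^{(k+1)} - P_2 y^{(k+1)}\| \le \|y^{(k+1)} - P_2 y^{(k)}\|$. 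Writing $d_k:=\|y^{(k)} - P_2 y^{(k)}\|$, these two facts give $d_{k+1}^2 \le d_k^2 - \|y^{(k+1)}-y^{(k)}\|^2$, so $d_k$ is non-increasing and $\sum_k \|y^{(k+1)}-y^{(k)}\|^2 < \infty$; hence $y^{(k+1)} - y^{(k)}\to 0$.

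Because $AA^* = I$ gives $\|A\|=1$, this transfers to the object domain as $x^{(k+1)}-x^{(k)}\to 0$. Consequently, along any convergent subsequence $x^{(k_j)}\to x_*$, we also have $x^{(k_j+1)}\to x_*$. Using $y^{(k)} = A^* x^{(k)}$ for $k\ge 1$ (which follows from $y^{(k)}\in A^*\cX$ together with $AA^*=I$), the AP update rewrites as
\[
x^{(k_j+1)} = A\bigl(b \odot \sgn(A^* x^{(k_j)})\bigr),
\]
and the remaining task is to take $j\to\infty$ inside the $\sgn$.

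This last step is where the only genuine obstacle lies: $\sgn$ is discontinuous at $0$, and at each coordinate $i$ with $(A^* x_*)[i] = 0$ the sequence $\sgn(A^* x^{(k_j)})[i]$ need not converge. This is precisely why the fixed-point equation \eqref{3.7.2} carries the free unimodular vector $u$. I would handle it by a coordinate-wise compactness argument: at indices $i$ with $(A^*x_*)[i]\ne 0$, set $u[i]=1$ and invoke continuity of $\sgn$ at nonzero points; at the finitely many indices with $(A^*x_*)[i]=0$, pass to a further subsequence so that $\sgn(A^* x^{(k_j)})[i]$ converges on the unit circle to some $u[i]$, and use the convention $\sgn(0)=1$ so that $u[i]\,\sgn(A^*x_*)[i]=u[i]$. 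With this choice $\sgn(A^* x^{(k_j)}) \to u\odot \sgn(A^* x_*)$ for some $u\in U$, and continuity of $A$ yields $x_* = A(b\odot u\odot \sgn(A^* x_*))$, which is exactly \eqref{3.7.2}.
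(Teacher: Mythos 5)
Your proof is correct and self-contained; the paper itself omits the argument and defers to \cite{AP-phasing}, whose proof is precisely this Fej\'er-monotonicity/asymptotic-regularity route (the residual $\|y^{(k)}-P_2y^{(k)}\|=\|\,b-|A^*x^{(k)}|\,\|$ is nonincreasing with summable squared increments, and the discontinuity of $\sgn$ at the zero coordinates of $A^*x_*$ is absorbed into the unimodular vector $u$ by passing to a further subsequence). The only cosmetic point is the index from which $y^{(k)}\in A^*\cX$ holds, which depends on the initialization and is immaterial.
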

The proof of Proposition \ref{Cauchy} can be adapted from \cite{AP-phasing} {\em verbatim} and is omitted. 

How do we  distinguish  the true ptychographic solution from the possibly many AP fixed points
(in view of recurring numerical stagnation from random initialization)? 

Consider the inequality \beq
\label{317}
\|x_*\|= \lt\|A\lt(\sgn\{{A^* x_*}\}\odot b\odot u\rt) \rt\| \le \lt\|\sgn\{A^* x_*\}\odot b\odot u\rt\|=\|b\|.
 \eeq
Clearly $\|x_*\|=\|b\|$ holds if and only if  the inequality in Eq.~(\ref{317})  is an equality, which is true only when     \begin{equation}\label{11'}
 \sgn\{A^* x_*\}\odot b\odot u= A^* z \quad \textrm{ for some $z \in \IC^n$}. 
 \end{equation}  
 Since $AA^*=I$ the fixed point equation \eqref{3.7.2} implies  $z=x_*$ and hence  \[
 \sgn\{A^* x_*\}\odot b\odot u=A^* x_*. 
 \]
Thus $b=|A^* x_*|$ implying $x_*$ is the ptychographic solution by Corollary \ref{cor:u}.
Therefore 

\begin{prop}\label{prop:ap} Under the assumptions of Corollary \ref{cor:u} including \eqref{connect}, 
all AP fixed points $x_*$ satisfy $\|x_*\|\le \|b\|$ and $x_*=f$ is the only AP fixed point 
satisfying  $\|x_*\|= \|b\|$. 
\end{prop}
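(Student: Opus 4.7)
The plan is to derive both assertions directly from the AP fixed-point equation \eqref{3.7.2}, using only the isometry $AA^*=I$ and Corollary \ref{cor:u}.

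First, I would establish the inequality $\|x_*\|\le\|b\|$. Set $y_*:=b\odot u\odot\sgn\{A^*x_*\}$, so that the fixed-point equation reads $x_*=Ay_*$. Since $|u|\equiv 1$ and $|\sgn(v)[j]|=1$ for every $j$ by the definition given in the paper, the components of $y_*$ are just the entries of $b$ rotated by unit-modulus phases, hence $\|y_*\|=\|b\|$. The condition $AA^*=I$ means that $A^*A$ is the orthogonal projection onto $\mathrm{range}(A^*)$; in particular $\|Ay\|\le\|y\|$ for every $y$, so
\[
\|x_*\|=\|Ay_*\|\le\|y_*\|=\|b\|.
\]

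Second, I would analyze the equality case. Because $A^*A$ is an orthogonal projection, the inequality $\|Ay_*\|\le\|y_*\|$ becomes an equality precisely when $y_*\in\mathrm{range}(A^*)$, i.e.\ $y_*=A^*z$ for some $z\in\IC^N$. Applying $A$ to this identity and using $AA^*=I$ yields $x_*=Ay_*=AA^*z=z$, so in fact $y_*=A^*x_*$. Unpacking the definition of $y_*$ we obtain
\[
b\odot u\odot\sgn\{A^*x_*\}=A^*x_*,
\]
and taking componentwise moduli (using $|u|=1$ and $|\sgn(\cdot)|=1$) gives $|A^*x_*|=b$. Thus $x_*$ realizes the measurement data.

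Finally, Corollary \ref{cor:u}, applied under the connectedness hypothesis \eqref{connect}, forces $x_*$ to agree with $f$ up to a single global phase factor, and the claim follows. The only nontrivial geometric input is the characterization of equality in the contraction $\|A\|_{\mathrm{op}}\le 1$, which follows cleanly from $AA^*=I$; the rest is a direct rewriting of the fixed-point equation. I therefore do not expect a real obstacle: the delicate part of the argument (passing from $|A^*x_*|=b$ to $x_*=f$) is already encapsulated in Corollary \ref{cor:u}.
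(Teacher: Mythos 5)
Your argument is correct and follows essentially the same route as the paper: the same chain $\|x_*\|=\|A(b\odot u\odot \sgn\{A^*x_*\})\|\le\|b\|$, the same characterization of equality via membership in $\mathrm{range}(A^*)$, and the same appeal to $AA^*=I$ and Corollary \ref{cor:u} to conclude $|A^*x_*|=b$ and hence $x_*=f$. No substantive differences.
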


 %The uniqueness of fixed point is emblematic of the global convergence behavior of DR. 
While we do not have the assurance of a unique  AP fixed point in comparison with DR,
 AP has a better convergence rate than DR as we discuss next. 

\subsection{Local convergence}

\begin{thm} \label{thm1} 
%Let $y^{(k)}=\fdr^{k-1}(y^{(1)})$,  $x^{(k)}=A y^{(k)}$. 
 Under the assumptions of Corollary \ref{cor:u} including \eqref{connect},  let $A^*$ be the measurement matrix and  \beq
  B:=A\,\,\diag\lt\{\sgn({A^*f})\rt\} \in \IC^{N\times M}.\label{B}
 \eeq
Then \beq\label{lam2}
 \gamma=\max\{ \|\Im(B^* u)\|: u\in \IC^N,~ u\perp \im f,  ~\|u\|=1\}<1. 
 \eeq
Moreover, for any given $0<\ep<1-\gamma$, if the initial point $y^{(1)}$ is chosen such that 
\[
\|\al^{(1)} x^{(1)}-f\|:=\min_{\al \in \IC \atop |\al|=1}\|\al x^{(1)}-f\|\quad\mbox{ is sufficiently small,}
\]
 then  we have the geometric convergence 
%\begin{equation}\label{23'}
%\min_{\al \in \IC \atop |\al|=1} \|  \alpha \z^{(k)}-y^{(k)}_*\|\le (\lamb_2+\ep)^{k-1} \| \alpha^{(1)}x^{(1)}-f\|, \quad k=1,2,\cdots \end{equation}
% implying that 
\beq\label{23}
\mbox{\bf \rm DR:}& \min_{\al \in \IC \atop |\al|=1} \| \alpha x^{(k)}-f\|\le (\gamma+\ep)^{k-1} \| \alpha^{(1)} x^{(1)}-f\|,\\
\mbox{\bf \rm AP:}& \label{41'} \min_{\al \in \IC \atop |\al|=1} 
{\|\alpha x^{(k)}- f\|}\le (\gamma^2+\ep)^{k-1} {\|\alpha^{(1)}x^{(1)}- f\|}. 
\eeq
%where  $\alpha^{(k)}:=\hbox{\rm arg}\min_{\alpha} \{ \|\alpha x^{(k)} -f\| :  |\alpha|=1\}$. 
\end{thm}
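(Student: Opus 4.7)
The proof naturally splits into three parts: establishing the strict bound $\gamma<1$, linearizing AP and DR at the true solution, and reading off the convergence rates from the linearized operators.

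\emph{Part 1: the strict bound $\gamma<1$.} Since $A^*$ is isometric and $D:=\diag\{\sgn(A^*f)\}$ is unitary, $B=AD$ satisfies $BB^*=AA^*=I$. Hence $B^*$ is itself an isometry, giving the Pythagorean identity $\|u\|^2 = \|\Re(B^*u)\|^2 + \|\Im(B^*u)\|^2$, so $\|\Im(B^*u)\|=\|u\|$ forces $\Re(B^*u)=0$, i.e.\ $B^*u\in\im\IR^M$. Writing $B^*u=\im s$ with real $s$ and using $BB^*=I$ yields $u=\im Bs$, so $w:=-\im u$ satisfies $A^*w=\sgn(A^*f)\odot s$; the diffraction pattern of $w$ has the same phase as that of $f$ up to a real sign at each pixel. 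The random-mask hypothesis then forces $w\in\IR\cdot f$ (apply Corollary~\ref{cor:u} to the one-parameter family $f+tw$, whose magnitude data is $|A^*f|+ts$ for $|t|$ small, and trace through the resulting global-phase equivalence), whence $u\in\IR\cdot\im f$, contradicting $u\perp\im f$.

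\emph{Part 2: linearization at $y_*:=A^*f$.} Setting $\sgn_*:=\sgn(A^*f)$ and expanding the nonlinear projection $P_2$ to first order at $y_*=b\odot\sgn_*$ gives
\[
P_2(y_*+\delta y)-y_* \approx \im\sgn_*\odot\Im(\overline{\sgn_*}\odot\delta y),
\]
so only the tangential (``phase'') part of the perturbation survives. Substituting $\delta y = A^*\delta x$ and using $AA^*=I$, the AP iteration becomes, to first order in the object domain,
\[
\delta x^{(k+1)} \approx L(\delta x^{(k)}),\qquad L(u):=\im B\,\Im(B^*u),
\]
and the DR iteration linearizes to the related real-linear operator $I + P_1(2P_2-I) - P_2$ acting on Fourier-domain perturbations.

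\emph{Part 3: rates via the Friedrichs angle.} Locally this is a two-subspace feasibility problem between $\hbox{\rm range}(A^*)$ (a complex subspace) and the tangent space $T_{y_*}\cY = \im\sgn_*\IR^M$ (a real subspace); by the argument of Part 1 their intersection is exactly the phase-symmetry line $\IR\cdot\im y_*$. A short computation identifies
\[
\cos\theta = \sup_{\|v\|=1,\,v\perp\im f}\|\Im(B^*v)\| = \gamma,
\]
with $\theta$ the Friedrichs angle modulo $\IR\cdot\im y_*$. Classical two-subspace convergence theory then yields linear contraction with rates $\gamma^2$ for AP and $\gamma$ for DR; the nonlinearity of $\cY$ produces $O(\|\delta x\|^2)$ remainder terms that are absorbed into the arbitrarily small $\ep$ once the initial iterate is taken close enough to $f$, and the minimization over $|\al|=1$ on the left-hand sides of \eqref{23}--\eqref{41'} simply accommodates the non-contracting phase direction $\IR\cdot\im f$. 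The main obstacle is Part 1: the strict bound $\gamma<1$ is where the random-mask uniqueness of Section~\ref{sec:u} is genuinely invoked; once it is in place, Parts 2 and 3 reduce to rather standard linearization-plus-two-subspace arguments.
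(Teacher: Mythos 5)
Your Parts 2 and 3 are consistent with what the paper actually does: it records the linearizations \eqref{ap1}--\eqref{dr2} and defers the contraction estimates to the nonptychographic arguments of \cite{AP-phasing, DR-phasing}, so the Friedrichs-angle/two-subspace sketch with the $\ep$ absorbing the quadratic remainder is acceptable there. Your reduction of the equality case to $\Re(B^*u)=0$ via the isometry of $B^*$ is also fine and is equivalent to the paper's Cauchy--Schwarz argument leading to \eqref{95}.

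The genuine gap is in Part 1, at the step ``the random-mask hypothesis then forces $w\in\IR\cdot f$.'' You try to get this from Corollary \ref{cor:u}, which is a \emph{magnitude}-uniqueness statement: it only says that two objects with \emph{identical} data $|A^*g|=|A^*f|$ agree up to a global phase. Your one-parameter family $f+tw$ has data $\lt||A^*f|+ts\rt|$, which for $s\neq 0$ differs from $|A^*f|$ already at first order in $t$, so the corollary has nothing to compare and yields no constraint on $w$. More fundamentally, global injectivity of the nonlinear map $x\mapsto |A^*x|$ (modulo phase) does not imply that its real-linear differential at $f$ has kernel exactly $\IR\cdot\im f$, and the latter is precisely what $\gamma<1$ asserts; no perturbation of the magnitude-uniqueness theorem can close this. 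What the paper actually invokes is a different result, Proposition \ref{prop:u} (uniqueness from the Fourier \emph{phases} up to a pixelwise $\pm$ sign, for a single random mask): your relation $A^*w=\sgn(A^*f)\odot s$ with real $s$ is exactly its hypothesis \eqref{mag}, and it gives $w=c_\bt f^\bt$ with a \emph{real} constant $c_\bt$ on each masked block. A second step, which your proposal omits entirely, is then needed: the minimum overlap condition \eqref{two2} forces all the per-mask constants $c_\bt$ to coincide, so that $w=cf$ globally and $u=\im c f$ contradicts $u\perp \im f$, $\|u\|=1$. Without citing the phase-uniqueness proposition and performing this patching across overlapping masks, the strict bound $\gamma<1$ --- which you correctly identify as the crux --- is not established.
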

For $\gamma<1$, the  convergence rate $\gamma^2$ of AP is better than the convergence rate $\gamma$ of DR. 
The proof is omitted as can be adapted to the ptychographic setting from the nonptychographic setting 
of \cite{AP-phasing, DR-phasing} without major changes. However, we will elaborate on the meaning of  and
give an estimate for  \eqref{lam2} below.

First let us explain the connection between the matrix $B$ in \eqref{B} and the subdifferential of the iterative map.
To this end, we consider the isomorphism $\IC^N\cong \IR^{2N}$ via the map
\[
G(x):=\lt[\begin{matrix} \Re(x)\\
\Im(x)\end{matrix}\rt],  \quad G(-ix)=\lt[\begin{matrix} \Im(x)\\
-\Re(x)
\end{matrix}\rt]
\]
and define 
the real-valued matrix
\beq
\mathcal{B}&=&\left[
\begin{array}{c}
\Re(B)     \\
 \Im(B)
\end{array}
\right]\in \IR^{2N\times M}.
\eeq
Denote the AP map by 
\[
F_{\rm AP}=P_1P_2
\]
and the DR map by 
\[
F_{\rm DR}=I+P_1 (2P_2 - I)- P_2. 
\]
From straightforward but somewhat tedious algebra, we have 
\beq\label{ap1}
G(dF_{\rm AP}(f)\xi)=G(\im B\Im(B^* \xi)),\quad \forall\xi\in \IC^N
\eeq
or equivalently
\beq\label{ap2}
G(-\im dF_{\rm AP}(f)\xi)=\cB\cB^\top G(-\im \xi), \quad \forall\xi\in \IC^N
\eeq
and
\beq\label{dr1}
dF_{\rm DR}(f)\eta=\diag[\sgn(A^*f)]J \diag[\overline{\sgn(A^*f)}] \eta
\eeq
\mbox{where}
\beq
\label{dr2}
Jy=(I-B^*B)\Re(y)+\im B^*B \Im(y). 
\eeq
Eq. \eqref{ap1}-\eqref{dr2} exhibit the central role of  $B$ in the subdifferentials $dF_{\rm AP}, dF_{\rm DR}$  at the point $f$.
For detailed derivation we refer the reader to \cite{AP-phasing, DR-phasing}.

Next we explain the meaning of the variational principle \eqref{lam2}.  Let $\lambda_1\ge \lambda_2\ge \ldots\ge \lambda_{2N}\ge 0$ be the singular values of 
\[
\mathcal{B}^\top=\left[
\Re(B)^\top\quad
 \Im(B)^\top
\right]\in \IR^{M\times 2N}.
\]
Since   the complex matrix $B^*$ is isometric, we
have $\lamb_{k}^2+\lamb_{2N+1-k}^2=1, \forall k=1,\ldots,2N$. 

By definition,  for any $x\in \IC^N$
 \beq\nn
&& B^* x=  \diag\lt[\overline{\sgn({A^* f})}\rt] A^* x\eeq
and hence %by \eqref{33}
\beq
&&\cB^\top G(f)= \Re[B^* f]=|A^* f|.  \label{56}\eeq
On the other hand, we have by isometry of $A^*$
\beq
\label{45}
\cB|A^* f|= \lt[\begin{matrix}
\Re(B|A^*f|)\\
\Im(B|A^*f|)
\end{matrix}\rt]=\lt[\begin{matrix}
\Re(AA^*f)\\
\Im(AA^*f)
\end{matrix}\rt]=\lt[\begin{matrix}
\Re(f)\\
\Im(f)
\end{matrix}\rt]=G(f).
\eeq
Eq. \eqref{56} and \eqref{45} imply $\lambda_1=1$ and $G(f)$ is a leading singular vector of $\cB^\top$. 
We can also easily verify
\beq
\label{last}
 \cB^\top G(-\im f)=\Im[B^*f]=0
 \eeq
 and hence $G(-\im f)$ is  a corresponding singular vector to $\lambda_{2N}=0$. 

Note again $
\Im[B^* u]=\cB^\top G(-\im u).$
The orthogonality condition $\im u\perp f$ is equivalent to $
G(f)\perp G(-\im u).$
Therefore 
$\gamma$ defined in   \eqref{lam2}  is the second largest singular value $\lamb_2$ of $\cB^\top$ 
and admits the variational principle
 \beq %\| B^* \|_\bot &:=&
\label{63} \gamma
&=&\max \{\|\cB^\top u\|: {u\in \IR^{2N},  u\perp G(f)}, \|u\|=1\}. 
%\max_{u\in \IC^N,u^*f=0}  \|u\|^{-1} \|\Im(B^* u)\| =\lambda_{2}
 \eeq
It is now straightforward to verify that the two variational principles, \eqref{lam2} and \eqref{63}, are equivalent. We will, however, continue to use \eqref{lam2} which is more convenient than \eqref{63}. 

\subsection{Spectral gap}
Finally, how do we see that $\gamma<1$?

From
\beq\Im(B^*x)%&=&\Im(\om_0^*\odot A^* x)\\
=\Im \lt({\overline{A^* f}\over |A^* f|} \odot A^*x\rt)
=\sum_{j=1}^M {\Re(a_j^* f) \Im( a_j^* x)-\Im( a_j^*f) \Re(a_j^* x)\over
(\Re^2(a_j^*f)+\Im^2( a_j^*f))^{1/2}}\label{57'}
\eeq
we have by the Cauchy-Schwartz inequality and the isometry of $A^*$
\beq\label{47}
\|\Im(B^* x)\|^2
&\le &
 \sum_{j=1}^M  \Re^2( a_j^* x)+ \Im^2(a_j^*x)=\sum_{j=1}^M |a_j^*x|^2=\|A^* x\|^2=\|x\|^2.
\eeq

In view of \eqref{57'}, the inequality becomes an equality if and only  if \beq\label{95}
\Re(a_j^* x)\Re(a_j^*f)+\Im( a_j^*x) \Im( a_j^*f)=0, \; \forall j=1,\cdots N,
\eeq
where $a_j$ are the columns of $A$, 
or equivalently
\beq\label{95'}
\sgn\{A^*x\}= \sigma\odot \om_0 
\eeq
where the components of $\sigma$ are either 1 or -1, i.e. $
 \sigma[j]\in \{1, -1\}, \quad \forall j=1,\cdots N. $
 
 Now we recall from \cite{DR-phasing} the following uniqueness theorem for the non-ptychographic setting. 
 \begin{prop}\label{prop:u}  (Uniqueness of Fourier magnitude retrieval)
 Suppose $f$ is not a line object and let the mask $\mu$'s phase be continuously and independently distributed. 
If for  the  matrix \eqref{one} we have 
\beq
\label{mag}
\measuredangle A^*\hat x=\pm \measuredangle A^* f
\eeq
where
 the $\pm$ sign may be  pixel-dependent, then almost surely $\hat x= c f$ for some constant $c\in \IR$. 
\end{prop}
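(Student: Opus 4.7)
The plan is to translate the pixelwise phase hypothesis into an algebraic identity on the $z$-transforms of the masked objects and then invoke the essential irreducibility established in Proposition~\ref{prop2}. Writing $\tilde F(\bz)$ and $\tilde G(\bz)$ for the $z$-transforms of $\mu\odot f$ and $\mu\odot\hat x$, I first observe that at each oversampled-DFT grid point $\bw_j\in\cL$ the hypothesis $\measuredangle(A^*\hat x)[j]=\pm\measuredangle(A^*f)[j]$ is equivalent to
\[
\Im\bigl(\tilde G(\bw_j)\,\overline{\tilde F(\bw_j)}\bigr)\cdot\Im\bigl(\tilde G(\bw_j)\,\tilde F(\bw_j)\bigr)=0.
\]
I would then decompose $\cL=\cL_+\cup\cL_-$ according to which of the two factors vanishes at each grid point.

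The two uniform cases are the easy ones. If $\cL_+=\cL$, the trigonometric polynomial $H_+(\bw):=\Im(\tilde G\,\overline{\tilde F})(\bw)$ has frequency support contained in the box $[-m,m]^2$ of cardinality exactly $|\cL|=(2m+1)^2$, and its vanishing on the entire grid forces $H_+\equiv 0$ by dimension counting. In $z$-transform form this yields the rational identity $\tilde G(\bz)\,\tilde F^*(1/\overline{\bz})=\tilde F(\bz)\,\tilde G^*(1/\overline{\bz})$. Combined with the essential irreducibility of $\tilde F$ from Proposition~\ref{prop2} and the fact that $\tilde G$ has the same support as $\tilde F$, this identity forces $\tilde G=c\,\tilde F$ with $c\in\R$, i.e., $\hat x=c\,f$. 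In the symmetric uniform case $\cL_-=\cL$, a parallel identity reading $\tilde G\,\tilde F$ real on the torus, together with degree and support comparisons using Proposition~\ref{prop2}, rules out any nonzero $\tilde G$ and leaves only the trivial case $c=0$.

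The main obstacle is the mixed case, where both $\cL_+$ and $\cL_-$ are proper subsets of $\cL$. Here a bare interpolation argument no longer closes the gap: the frequency support $[-2m,2m]^2$ of $H_-(\bw):=\Im(\tilde G\,\tilde F)(\bw)$ strictly exceeds $|\cL|$, so vanishing of $H_-$ on only part of the grid does not propagate globally. To handle this case I would adapt the probabilistic case analysis from the proof of Theorem~\ref{thm:u}: expand the vanishing relations $H_+(\bw_j)=0$ for $\bw_j\in\cL_+$ and $H_-(\bw_j)=0$ for $\bw_j\in\cL_-$ in terms of the random phases $\{\ln\mu(\bn)\}$, and argue that any genuinely mixed sign pattern forces a nontrivial polynomial relation among these independent continuous random variables, which holds only on a measure-zero event. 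The delicate technical step will be to track which mask phases enter each equation and to verify that no cancellation can occur except on a null set, mirroring the crossover case analysis carried out for \eqref{1.10}, \eqref{1.12}, \eqref{1.13} in the proof of Theorem~\ref{thm:u}.
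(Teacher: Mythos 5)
First, a point of reference: the paper does not actually prove Proposition \ref{prop:u} --- it is quoted from \cite{DR-phasing} --- so your argument has to stand on its own. Your uniform ``$+$'' case is sound in outline: $\Im(\tilde G\overline{\tilde F})$ has spectrum in $[-m,m]^2$, which matches the $(2m+1)^2$ points of $\cL$, so vanishing on the grid propagates to the torus, and irreducibility (Proposition \ref{prop2}) then forces $\tilde G=c\tilde F$ with $c$ real (you should still say a word about why $\tilde F$ cannot divide its own conjugate reversal, which is where the randomness enters a second time). The uniform ``$-$'' case, however, is internally inconsistent as written: you assert that $\tilde G\tilde F$ is ``real on the torus,'' but that requires exactly the interpolation step you correctly rule out one paragraph later --- $\Im(\tilde G\tilde F)$ has spectrum in $[-2m,2m]^2$, so its vanishing on the $(2m+1)^2$-point grid only yields an aliased, periodized conjugate-symmetry condition on the coefficients of $\tilde g*\tilde f$, not the polynomial identity you need. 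Ruling out the conjugate-inversion twin here is a genuine piece of work (it hinges on a residual linear phase factor being nontrivial on $\cL$), not a one-sentence corollary.

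The more serious gap is the mixed case, which you rightly identify as the main obstacle but for which your plan does not work. The probabilistic cancellation argument in the proof of Theorem \ref{thm:u} (equations \eqref{1.10}, \eqref{1.12}, \eqref{1.13}) is available only because Propositions \ref{prop1} and \ref{prop2} first reduce the competing object $g$ to a finite explicit list of forms built from $f$, $\mu$ and integer shifts; only then do the constraints become relations among the mask phases and known quantities, and ``holds with probability zero'' makes sense. Here the hypothesis \eqref{mag} is phase-only data, so Proposition \ref{prop1} (a statement about magnitude data) gives no such reduction, and $\tilde G$ remains a free unknown with $O(m^2)$ complex degrees of freedom that is allowed to depend on the realization of $\mu$. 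Showing that each equation $H_{\pm}(\bw_j)=0$ fails almost surely for a \emph{fixed} $\tilde G$ does not exclude the event that for almost every mask \emph{some} $\tilde G\neq c\tilde F$ solves the whole system; a union bound over a continuum of candidates is not available. Moreover, each such equation involves every mask phase simultaneously (it is a full Fourier sum), so the clean bookkeeping of ``which four logarithms can cancel'' has no analogue. As it stands, your argument establishes the proposition only when the sign in \eqref{mag} is $+$ at every pixel.
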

This result implies that from the {\em Fourier phase} data, up to a $\pm$ sign,  for each mask in the ptychographic setting we can identify 
the illuminated part of the object, up to a real constant.  Now for any ptychographic scheme under 
the minimum overlap condition \eqref{two2}, the constants associated with all the masked domains must be the same. Hence \eqref{47} is a strict inequality and  $\gamma<1$.

\section{Convergence rate bound}\label{sec:rate}

In this section, we give an estimate of  $\gamma$ and exhibits an explicit dependence of $\gamma$ on the parameter $q$ of the minimalist scheme introduced in Section \ref{sec:scheme}. 

 We divide the initial mask domain $\cM^0$, now denoted as $\cM^{00}$,  into four equal blocks
$\cM^{00}=\bigcup_{i,j=0}^1 \cM^{00}_{ij}$ or in the matrix form
\[
\cM^{00}=
\left[
\begin{array}{cc}
\cM^{00}_{00}  &  \cM^{00}_{10}   \\
\cM^{00}_{01}  &\cM^{00}_{11} 
\end{array}
\right],\quad \cM^{00}_{ij}\in \IC^{m/2\times m/2},\quad i,j=0,1.
\]
 For $m=2n/q$, let $T^{kl}= {m\over 2}({k}, {l})$. Denoting the $T^{kl}$-shift of $\cM^{00}$ by $\cM^{kl}$  we have $
\cM^{kl}=\bigcup_{j,k=0}^1 \cM_{ij}^{kl} $ where  $\cM_{ij}^{kl}$ is the $T^{kl}$-shift of $\cM^{00}_{ij}$
 The corresponding 
partition of the initial mask $\mu^{00}$ and  $T^{kl}$-shifted mask $\mu^{kl}$ can be written as
\[
 \mu^{00}=
\left[
\begin{array}{cc}
\mu^{00}_{00}  &  \mu^{00}_{10} \\
\mu^{00}_{01} &\mu^{00}_{11}
\end{array}
\right],\quad  \mu^{kl}=
\left[
\begin{array}{cc}
\mu^{kl}_{00}  &  \mu^{kl}_{10}   \\
\mu^{kl}_{01}  &\mu^{kl}_{11} 
\end{array}
\right].
\]

For convenience, we consider the periodic boundary condition on the whole object domain, i.e.
\beq
\label{bc1}
\cM^{q-1,l}_{10}=\cM^{0l}_{00}, &&\quad \cM^{q-1,l}_{11}=\cM^{0l}_{01}, \\
\cM^{k,q-1}_{01}=\cM^{k0}_{00}, &&\quad \cM^{k,q-1}_{11}=\cM^{k0}_{10},  \label{bc2}\\
\mu^{q-1,l}_{10}= \mu^{0l}_{00},&&\quad \mu^{q-1,l}_{11}=\mu^{0l}_{01},\\
\mu^{k,q-1}_{01}= \mu^{k0}_{00},&&\quad \mu^{k,q-1}_{11}=\mu^{k0}_{10},
\eeq
for all $j,k=1,...,q-1$.

Accordingly, we divide the object $f$ into $q^2$ non-overlapping blocks
  \beq \label{x,g}
 f=
 \commentout{\left(
\begin{array}{ccc}
f_1  \\
\vdots \\
f_{q}
\end{array}
\right)=
}
\left[
\begin{array}{ccc}
f_{11}  & \ldots  & f_{1q}   \\
\vdots  & \vdots  &\vdots   \\
f_{q1}  &\ldots   &  f_{qq}  
\end{array}
\right],\quad f_{ij}\in \IC^{m/2\times m/2}. \eeq

Let the ODFT $\Phi^{kl}$ defined on $\cM^{kl}$ be divided  into four equal blocks
\[ \Phi^{kl}=
\left[
\begin{array}{cc}
\Phi^{kl}_{00}  &  \Phi^{kl}_{10} \\
\Phi^{kl}_{01} &\Phi^{kl}_{11}
\end{array}
\right]
\]
where each $\Phi^{kl}_{ij}:  \IC^{m/2 \times m/2} \to \IC^{2m \times 2m}$ is a rank-3 tensor defined on $\cM^{kl}_{ij}$ and normalized such that
     \beq \label{FC}
\Phi^{kl*}_{ij} \Phi^{kl}_{i'j'}=\frac{\delta_{ij,i'j'}}{4}I_{m/2\times m/2}, \quad  i,j,i',j'=0,1. 
\eeq
Analogous to \eqref{x,g} the diffracted field  $h=A^*f$ can be partitioned into $q\times q$ blocks, $[h_{kl}]$,
where
\[ 
h_{kl}=\sum_{i,j=0}^1
\Phi^{kl}_{ij}(\mu^{kl}_{ij}\odot f_{i+k,j+l}), \quad k,l =1,\ldots, q
\]
where $f_{i+k,j+l}$ is cyclically defined with respect to the subscript.

\begin{prop}\label{prop:gap}
For the minimalist scheme, $\gamma$ defined in \eqref{lam2} satisfies 
\beq
\label{bound}
\gamma> 1-C/q^2
\eeq
 for some constant $C$ depending on $f$, but independent of $q$. 
 \begin{rmk}
 Note that the derivation of the bound \eqref{bound} does not assume a random mask and
 is valid for the minimalist scheme with any mask. 
 \end{rmk}
%\footnote{The statement is still valid if $e_j$ refers to  the $j^{th}$ column of blocks in $f$. }
\commentout{
Then 
\[
\lambda_{2N-1}\le \sqrt{2}
\frac{\max_{j=1,\ldots, q} \|f_j\|}{\min_{j=1,\ldots, q} \|f_j\|_{\rm  F}} \sin \frac{\pi}{q}.
\]
Hence, according to $\lambda_2^2+\lambda_{2n-1}^2=1$, $\lambda_{2}\ge  1-c/q^2$ for some constant $c>0$, depending on $f$, but independent of $q$. 
}
\end{prop}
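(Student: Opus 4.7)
The plan is to reduce the desired lower bound $\gamma > 1-C/q^2$ to an upper bound on the second smallest singular value $\lambda_{2N-1}$ of $\cB^\top$, and then to obtain that upper bound via an explicit near-null test vector. Since $\lambda_k^2 + \lambda_{2N+1-k}^2 = 1$ and $\gamma = \lambda_2$, it suffices to show $\lambda_{2N-1} \le C'/q$. Using the identity $\cB^\top G(-\im \xi) = \Im(B^*\xi)$ read off from \eqref{ap2}, together with the fact that $G(-\im f)$ spans the one-dimensional null space of $\cB^\top$, the task reduces to producing $\xi \in \IC^N$ satisfying $\Re\langle \xi, f\rangle = 0$ for which $\|\Im(B^* \xi)\|/\|\xi\| = O(1/q)$.

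I would construct $\xi$ as a piecewise rescaling of $f$: on the non-overlapping block $(k,l)$ of the partition \eqref{x,g}, set $\xi_{kl} := c_k f_{kl}$, where $c_0,c_1,\ldots,c_{q-1}$ are real coefficients depending only on the first block index. The key observation is that inside the shifted mask $(k,l)$ the four object sub-blocks are indexed by $(k+i,l+j)$ for $i,j\in\{0,1\}$, and the two with $i=1$ carry coefficient $c_{k+1}$ while the two with $i=0$ carry $c_k$. With the notation $h^{kl}_{ij} := \Phi^{kl}_{ij}(\mu^{kl}_{ij}\odot f_{k+i,l+j})$ and $h_{kl} := \sum_{ij} h^{kl}_{ij}$, the $(k,l)$-pattern of $A^*\xi$ equals $c_k h_{kl} + (c_{k+1}-c_k)(h^{kl}_{10}+h^{kl}_{11})$. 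Since $c_k$ is real, the first term is absorbed entirely into $\Re(B^*\xi)$ and contributes nothing to the imaginary part. Using \eqref{FC} and $|\mu^{kl}_{ij}|=1$ to compute $\|h^{kl}_{ij}\|^2 = \tfrac14\|f_{k+i,l+j}\|^2$ and to obtain orthogonality across $(i,j)$, then summing over all patterns and re-indexing via the periodic boundary \eqref{bc1}--\eqref{bc2}, I expect
\[
\|\Im(B^*\xi)\|^2 \le \tfrac12 \sum_{k=0}^{q-1}(c_{k+1}-c_k)^2\,\|f_{k+1,\cdot}\|^2,\qquad \|\xi\|^2 = \sum_k c_k^2\,\|f_{k,\cdot}\|^2,
\]
where $\|f_{k,\cdot}\|^2 := \sum_l\|f_{kl}\|^2$ and indices are cyclic modulo $q$.

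For the concrete choice of coefficients I take $c_k = \cos(2\pi k/q) + \beta$, with $\beta$ fixed by the single linear constraint $\sum_k c_k\|f_{k,\cdot}\|^2 = 0$ enforcing $\Re\langle\xi,f\rangle=0$. A direct computation gives $|c_{k+1}-c_k|\le 2\sin(\pi/q)$ uniformly in $k$ and $\sum_k c_k^2 = q/2 + q\beta^2 \ge q/2$, so
\[
\lambda_{2N-1}^2 \;\le\; 4\sin^2(\pi/q)\cdot \frac{\max_k\|f_{k,\cdot}\|^2}{\min_k\|f_{k,\cdot}\|^2}.
\]
Combined with $\sin(\pi/q)\le \pi/q$, this yields $\gamma^2 \ge 1 - C/q^2$ and hence $\gamma > 1-C/q^2$, with $C$ depending only on the row-norms of $f$ and not on $q$.

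The main obstacle I foresee is normalization bookkeeping rather than any conceptual difficulty: the factor $1/4$ in \eqref{FC}, the global isometric rescaling of $A^*$, and the correct treatment of the wrap-around sub-blocks at $k=q-1$ through \eqref{bc1}--\eqref{bc2} all have to be tracked consistently so that no hidden $q$-dependence leaks into $C$. A minor subtlety is checking that the orthogonality shift $\beta$ remains $O(1)$ uniformly in $q$; since $\beta$ is a weighted average of cosines we have $|\beta|\le 1$, which preserves both the $O(\sin(\pi/q))$ size of $|c_{k+1}-c_k|$ and the $\Theta(q)$ lower bound on $\sum_k c_k^2$.
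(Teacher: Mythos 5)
Your proposal follows essentially the same route as the paper's proof: reduce to bounding $\lambda_{2N-1}$ via the identity $\lambda_2^2+\lambda_{2N-1}^2=1$, take a test vector that multiplies each block-row of $f$ by a slowly varying sinusoidal real coefficient, exploit the cancellation $\Im(\overline{\om}_{kl}\odot h_{kl})=\Im|h_{kl}|=0$ so that only the differences $c_{k+1}-c_k$ survive, and control the remainder with the normalization \eqref{FC} to get $\lambda_{2N-1}^2\le 4\sin^2(\pi/q)\,\max_k\|f_k\|^2/\min_k\|f_k\|^2$. The only cosmetic difference is that you enforce orthogonality to $f$ by an additive shift $\beta$ in $\cos(2\pi k/q)+\beta$, whereas the paper uses a phase shift $a\sin(2\pi j/q - c)$ with $c$ chosen by the intermediate value theorem; both are valid and lead to the same bound.
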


\begin{proof}
For simplicity, we assume $\|f\|=1$.  Analogous to \eqref{lam2}, we have
from \eqref{last} the variational principle 
 \beq \label{costF}
\lambda_{2N-1}=\min\{\|\Im( B^* g)\|: g\in \IC^{n\times n}, g\perp f,  \|g\|=1\}
\eeq

Denote $\om=\sgn(A^* f)$ and 
consider the test function for \eqref{costF}
\[
 g=
\left[
\begin{array}{ccc}
v_1 f_{11}  & \ldots  & v_1f_{1q}   \\
\vdots  & \vdots  &\vdots   \\
v_q f_{q1}  &\ldots   &  v_q f_{qq}  
\end{array}
\right] \in \IC^{n\times n}
\]
where
  \beq \label{v}
  v_j= a \sin (\frac{2\pi j}{q}-c),\quad  j=1,\ldots, q,\eeq 
for some real constants $a, c$ to be selected.

Let $f_j$ be  the $j$-th  row or column of  \eqref{x,g}. The orthogonality condition  $ g\perp f$  leads to 
\beq \label{ortho2}
0=\sum_{i=1}^q v_{i} \sum_{j=1}^q\|f_{ij}\|^2= \sum_{i=1}^q v_{i}  \|f_{i}\|^2=
a\sum_{j=1}^q
\|f_j\|^2 \sin(2\pi j/q-c)
=:p(c).
\eeq 
That is,
 $c$ needs to be a real root of 
$p$. Since $ p(0)=-p(\pi)$, the existence of a root $c\in [0,\pi]$ follows from the intermediate value theorem. 
On the other hand, to satisfy 
$\|g\|=1$ we need \beq \label{norm2}
1=\sum_{j=1}^q v_{j}^2 \|f_{j}\|^2=a^2 \sum_{j=1}^q \sin^2 ({2\pi j}/{q}-c)\|f_j\|^2
\eeq 
which implies
\beq
\label{a2}
a^2= \lt(\sum_{j=1}^q \sin^2 ({2\pi j}/{q}-c)\|f_j\|^2\rt)^{-1}. 
\eeq

Write  \[
 A^* g=
\left(
\begin{array}{ccc}
h_{11}  & \ldots  & h_{1q}   \\
\vdots  & \vdots  &\vdots   \\
h_{q1}  &\ldots   &  h_{qq}  
\end{array}
\right),
\]
where 
\[ 
h_{kl}=\sum_{i,j=0}^1
\Phi^{kl}_{ij}(\mu^{kl}_{ij}\odot f_{i+k,j+l})v_{k+j}, \quad k,l =1,\ldots, q. 
\]
Likewise, we have
   \[
\om=\sgn({A^* f})
=\left(
\begin{array}{ccc}
\om_{11}  & \ldots  & \om_{1q}   \\
\vdots  & \vdots  &\vdots   \\
\om_{q1}  &\ldots   &  \om_{qq}  
\end{array}
\right),\quad
\om_{kl}=\sgn\lt\{
h_{kl}=\sum_{i,j=0}^1
\Phi^{kl}_{ij}(\mu^{kl}_{ij}\odot f_{i+k,j+l})\rt\}. 
 \] 
To calculate $\Im(B^* g)=\Im(\overline{\om}\odot A^* g)$, we introduce
\beq
u_{ij}&=&\Im\left[
\overline{ \om}_{ij}
\odot (\Phi^{ij}_{00}\mu^{ij}_{00}\odot f_{ij} +\Phi^{ij}_{10} \mu^{ij}_{10}\odot f_{i+1,j})\right]\in\IR^{4n/q\times 4n/q}\\
u'_{ij}&=&\Im\left[
\overline{ \om}_{ij}
\odot (\Phi^{ij}_{01} \mu^{ij}_{01}\odot f_{i,j+1} +\Phi^{ij}_{11} \mu^{ij}_{11}\odot f_{i+1,j+1})\right]\in\IR^{4n/q\times 4n/q}.
\eeq
Note that 
 \[u_{ij}+u'_{ij}=\Im\left[
\overline{ \om}_{ij}\odot
\sum_{k,l=0}^1\Phi^{ij}_{kl}( \mu_{kl}^{ij}\odot f_{i+k,j+l})\right] =
\Im\lt|\sum_{k,l=0}^1\Phi^{ij}_{kl}( \mu_{kl}^{ij}\odot f_{i+k,j+l})\rt|=0\]
and hence 
\beq \label{w}
\|\Im(\overline{\om}_{ij}\odot h_{ij})\|^2=\|
u_{ij} v_{i}+u_{ij}' v_{i+1}\|^2=\|u_{ij} \|^2 (v_{i}-v_{i+1})^2.
\eeq 

Let $c_{ij}^{kl}$ be the norm of the mapping $
f_{ij}\longrightarrow \Im(\overline{\om}_{ij}\odot F_{kl}^* (\mu^{ij}_{kl}\odot f_{i+k,j+l}))$. 
By (\ref{FC}) 
we have $c_{ij}^{kl}\in [0,1/2]$. Thus,
 \beq \label{mu}
\sum_{j=1}^q \| u_{ij}\|^2\le  \sum_{j=1}^q ( (c^{00}_{ij})^2\| f_{ij}\|^2 +(c^{10}_{ij})^2\| f_{i+1,j}\|^2)\le   \frac{1}{2} \|f_{i}\|^2.
\eeq 

%F(h)
%==\sum_{j_1=1}^{q} \mu_j
%(v_{j_1}-v_{j_1+1})^2.
%\]
Combining 
 (\ref{w}), (\ref{v}), (\ref{norm2}), \eqref{a2} and (\ref{mu}) we have the following calculation 
 \beq
\label{53}\lamb_{2N-1}^2&\le &\|\Im(\overline{\om}\odot (A^* g))\|^2\\
&=&
\sum_{i=1}^{q} \sum_{j=1}^q 
\|\Im(\overline{\om}_{ij}\odot h_{ij})\|^2\nn\\
&=&\sum_{i=1}^{q} (v_{i}-v_{i+1})^2 \sum_{j=1}^q \| u_{ij}\|^2\nn\\
&\le& {a^2\over 2}  \sum_{i=1}^q \|f_i\|^2 \left|\sin ({2\pi i}/{q}-c)-\sin ({2\pi (i+1)}/{q}-c)\right|^2\nn\\
&\le& {\sum_{i=1}^q \|f_i\|^2 \left|\sin ({2\pi i}/{q}-c)-\sin ({2\pi (i+1)}/{q}-c)\right|^2\over
2\sum_{j=1}^q \sin^2 ({2\pi j}/{q}-c)\|f_j\|^2} \nn\\
&=&  4\sin^2({\pi}/{q}) \frac{\sum_{j=1}^q  \|f_j\|^2 \cos^2({2 \pi (j+0.5)}/{q}-c)}{
2\sum_{j=1}^q \|f_j\|^2\sin^2 ({2\pi j}/{q}-c)}.\nn
\eeq
Note that 
\beqn
\frac{\sum_{j=1}^q \cos^2({2 \pi (j+0.5)}/{q}-c)  \|f_j\|^2}{
2\sum_{j=1}^q \sin^2 ({2\pi j}/{q}-c)\|f_j\|^2
}
&\le& \frac{\max_j \|f_j\|^2\sum_{j=1}^q  \cos^2({2 \pi j}/{q}-c+{\pi}/{q}) }{2 \min_j  \|f_j\|^2
\sum_{j=1}^q \sin^2 ({2\pi j}/{q}-c)}\\
&\le& \frac{ \max_j  \|f_j\|^2}{2 \min_{j}  \|f_j\|^2}:=c
\eeqn
and hence
\[
\lamb_{2N-1}^2\le 4c\sin^2({\pi}/{q}). 
\]
The desired result then follows
from the identify $\lamb_2^2+\lamb_{2N-1}^2=1$. 
\end{proof}

\section{Numerical experiments}\label{sec:num}
\begin{figure}[h]
\centering
\subfigure[Modulus of RPP]{\includegraphics[width=5cm]{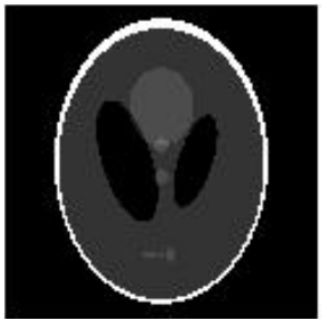}}
        % \subfigure[The i.i.d. random mask]{\includegraphics[width = 5cm]{fig2/correlated-mask64}}         %\includegraphics[width = 5cm]{figs/HRMBar.pdf}}
       %  \subfigure[$220 \times 220$ virus]{
       %  \includegraphics[width = 5cm]{virusLoose.eps}}
                  % \includegraphics[width = 4.3cm]{figs/SmoothHRMBar.pdf}}
      % \subfigure[Fresnel mask: $c={3\over \pi}$]{\includegraphics[width=3.5cm]{figs/fig1-4}}
        \subfigure[$\rho={3\over 25\pi}\approx 0.038$]{\includegraphics[width=4.8cm]{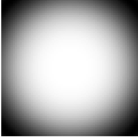}}
            \subfigure[$\rho={6\over 5\pi}\approx 0.38$]{\includegraphics[width=4.8cm]{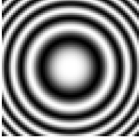}} 
               %\subfigure[Correlated mask of $\ell=16$]{\includegraphics[width=4.5cm]{figs/gc1-1}
                  %\includegraphics[width=4.5cm]{fig2/gc_1}}
                 \subfigure[$\ell=16, m=64$]{\includegraphics[width=5cm]{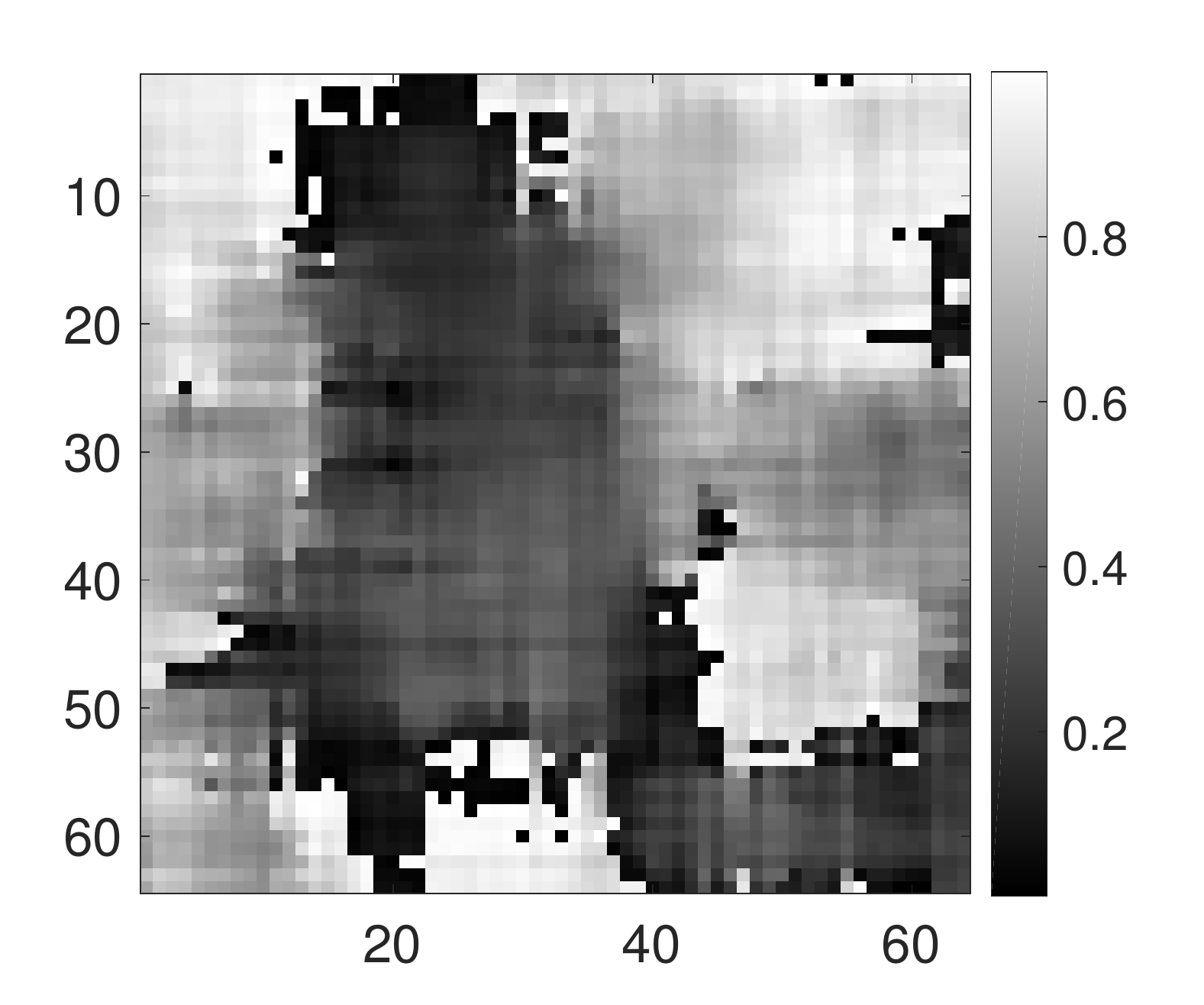}}
   \subfigure[$\ell=8, m=32$]{\includegraphics[width=5cm]{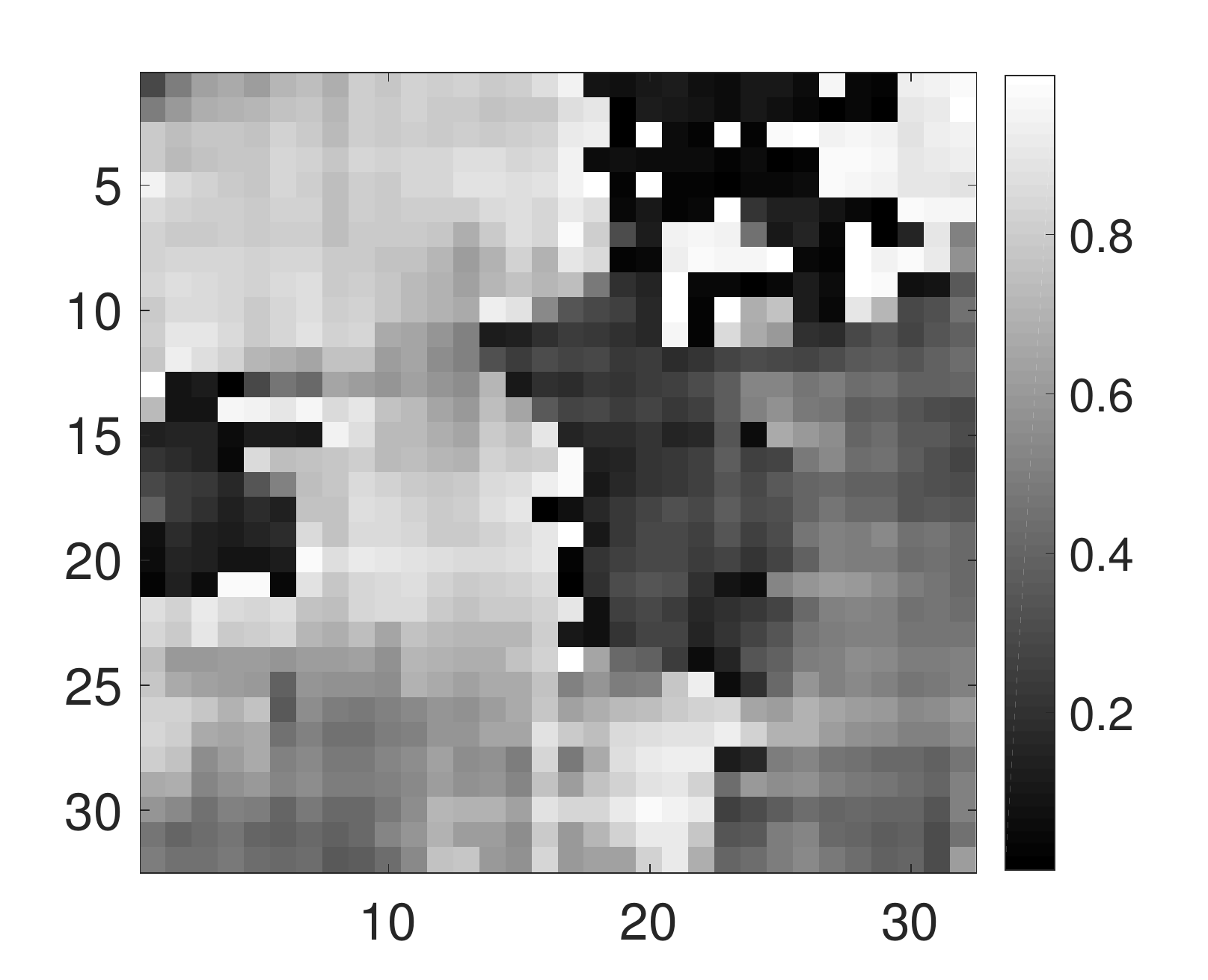}}
    \subfigure[$\ell=4, m=16$]{\includegraphics[width=5cm]{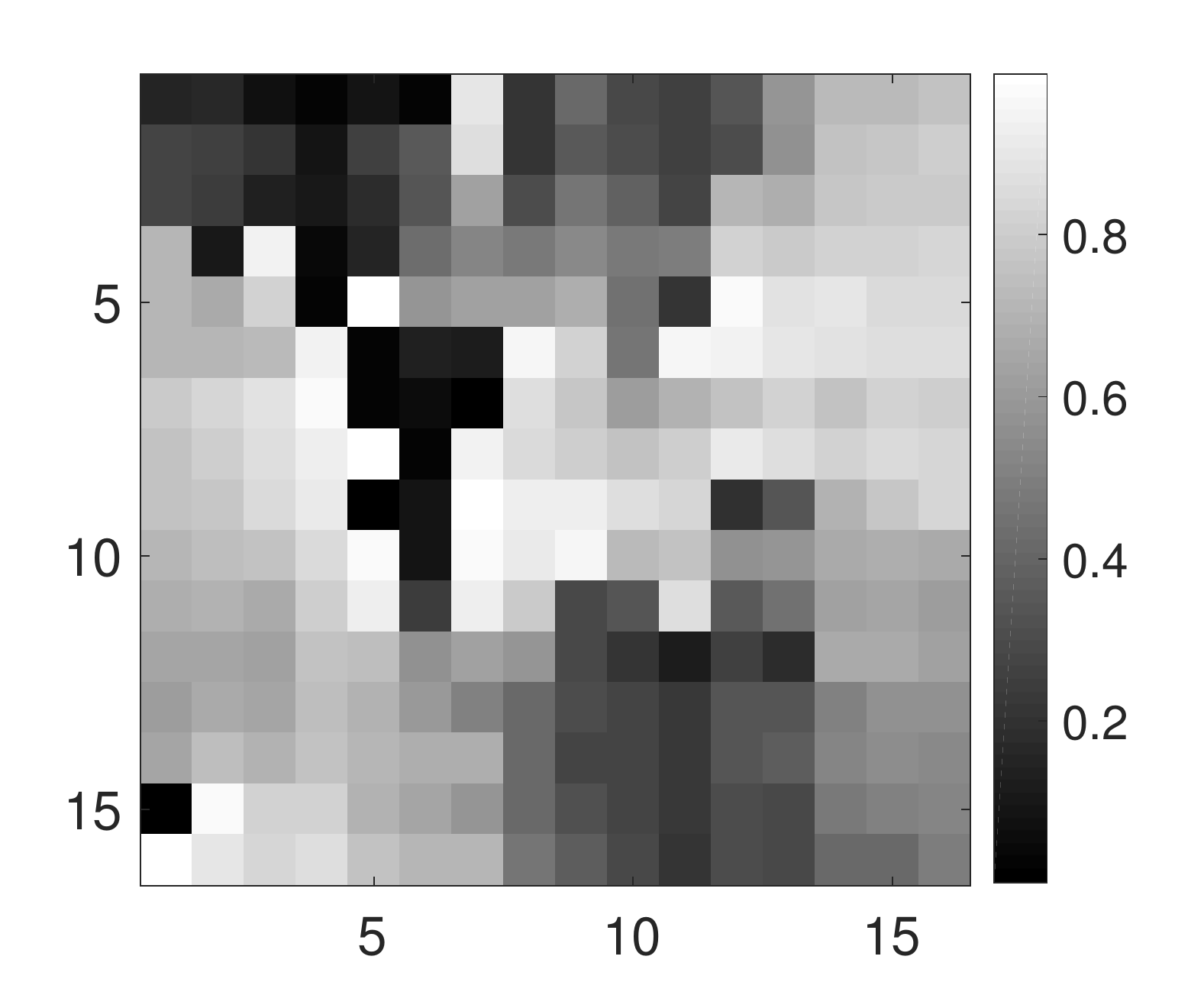}}     
 \caption{(a) The modulus of $128\times 128$ RPP;  (b)\&(c) The modulus of the real part of the respective Fresnel masks; 
 (d), (e)\&(f) The phases of various correlated random masks  in the unit of $2\pi$.} 
 \label{fig:mask}
\end{figure}

A primary purpose of our numerical experiments
is to find out how $q$ affects the numerical reconstruction and propose a practical guideline for
using the minimalist scheme. 
We also want to see how the complexities of the mask and the object affect numerical performance. 
Finally, we want to test how robust the minimalist scheme is with respect to measurement noise. 

As pointed out above, DR has the true solution as the unique fixed point in the object domain  (Proposition \ref{prop4}) while AP has a better convergence rate than DR (Theorem \ref{thm1}). A natural way to combine their
strengths is to use DR as the initialization method for AP. 
We choose AP and DR as the building blocks of
our reconstruction algorithm because of the fixed point and convergence properties 
and also because there are no adjustable parameters which can be tuned to optimize the performance as in other algorithms \cite{ADM, Hesse}. We do not claim that this combination  
yields the best algorithm for ptychography.  Quite the contrary, our approach can be
easily improved,  for example, by initializing AP with  
the DR iterate of the {\em least residual} within a given number of iterations
instead of the last iterate. 
%See Section \ref{sec:con} for discussion on the rationale for our choice of reconstruction algorithms. 

Our test image $f$ is {randomly phased phantom (RPP)}:
the phantom (Fig. \ref{fig0} (a)) with phase at each pixel being independent and  uniformly distributed over a specific range, referred to as the angle range hereafter. RPP is chosen for two reasons: (i) the core image is surrounded by dark pixels and
the  loose support  makes RPP more challenging  to reconstruct than an image of a tight support;
(ii) the adjustable angle range is a convenient way for controlling 
the object complexity.

We use the relative error  (RE) and residual (RR) as figures of merit
for the recovered image $\hat{f}$:  
\beqn
\mbox{\rm RE} (\hat{f}) &= &
 \min_{\alpha\in \IR}\frac{\|f-e^{i\alpha}\hat{f}\|}{\|f\|}
\nn\\
\mbox{\rm RR}(\hat{f}) &= &\frac{\| \ b - |A^*\hat{f}| \ \| }{\| b\|}. 
\eeqn

\subsection{Random and Fresnel masks}

We consider two kinds of 
random masks
$e^{\im\theta(\bn)}$ where $\theta(\bn)$ are either  independent, identically distributed (i.i.d.) 
or 
 $\ell$-correlated uniform random variables on $[0,2\pi]$, where $\ell\in \IN$ is the correlation length. 

The correlated random mask is produced by convolving the  i.i.d.  mask with the characteristic function 
of the set $\{(k_1,k_2)\in \IZ^2: |\max\{|k_1|, |k_2|\}|\le\ell/2\}$
\commentout{
\[
g_\ell(k_1,k_2) =\left\{ \begin{matrix}
1, & |\max\{|k_1|, |k_2|\}|\le\ell/2\\
  0,&\hbox{else} 
         \end{matrix}\right.
         \]
\[
g_\ell(k_1,k_2) =\left\{ \begin{matrix}
\exp{[-\ell^2 / (\ell^2 - |\max\{|k_1|, |k_2|\}|^2)]}, & |\max\{|k_1|, |k_2|\}|<\ell\\
  0,&\hbox{else} 
         \end{matrix}\right.
         \]
for $k_1,k_2\in \IZ$} and 
 normalizing pixel-by-pixel to get a phase mask.  The i.i.d. mask corresponds to $\ell= 1$.  %The correlated phases are uniform random variables over $[0,2\pi)$ with a correlation length of about 10 pixels and hence have much lower ($100$ times less) degrees of diversity than the i.i.d. mask. 

 We also consider 
the Fresnel mask with
\beq\label{par}
\mu^0(k_1,k_2):=\exp\lt\{\im \pi \rho ((k_1-\beta_1)^2+(k_2-\beta_2)^2)/m\rt\},\quad k_1, k_2=1,\cdots, m ~(={2n/q})
\eeq
where $\rho,\beta_1,\beta_2\in \IR$ are adjustable parameters, as well as the plain mask ($\rho=0$). The choice
of $\beta_1, \beta_2$ has an insignificant effect on numerical reconstruction. The form of the discrete Fresnel phase is dictated
by our goal of keeping the angular aperture of the illumination fixed independent of 
$m$ since \eqref{par} describes a point-source illumination
with both the aperture (i.e. the linear size of the mask) and 
 the distance to the object proportional to the parameter $m$ \cite{BW}. 
 If we set the distance from the point source to the object to be $m L$ and the pixel size to be $\delta\times \delta$, then in the Fresnel kernel $\rho=\delta^2/(\lambda L)$ where $\lambda $ is the wavelength. 
 For a different $\rho$, we imagine varying $L$ while keeping $\lambda$ and $\delta$ fixed. 
 The larger $L$ is, the smaller $\rho$ is and hence the coarser the mask is. 
With the minimalist scheme described in Section \ref{sec:scheme} and the Fresnel mask
\eqref{par} with $\rho$ fixed, 
 the total bandwidth of the mask and the total number of measurement data
 are fixed as $q$ varies. Fig. \ref{fig:mask}(b)(c) shows the real part of the Fresnel mask at two different $\rho$. 
 
 In the same spirit, in the case of random mask, we let the correlation length $\ell$ be proportional to
 $m$ as  $q$ varies when we turn to the noisy case. Fig. \ref{fig:mask}(d)-(f) shows three examples of correlated masks with the same ratio $m/\ell=4$. 
 
 \subsection{Twin images with the Fresnel mask}
 Our first experiment has to do with the choice of the Fresnel parameter $\rho$ in \eqref{par}.
 Fig. \ref{fig1} shows that the error spikes around $\rho \in \IN$. This phenomenon is due to the existence of twin-like image. As explained in the appendix, for $q=2$, $g:=Q f\odot Q \mu \odot\overline{\mu}$ produces
 the same ptychographic data as $f$ where $Q$ is the conjugate inversion operation.
 In such a case, ptychographic solutions are not unique and the ambiguity hurts 
 the performance of reconstruction. For $q>2$, the spikes are much smaller than those of $q=2$. 
 As $q$ increases from 2 to 6,  the order of magnitude of fluctuation from peak to valley decreases from more than four orders of magnitude  to about two or less. To avoid the twin-image ambiguity, we 
 choose irrational values of $\rho$. 
 % So we avoid those values of $c$.

%{\red Referees probably would ask us to compare our algorithms with others. I think we should at least compare with \cite{ADM}.}

  \begin{figure}
\centering
\subfigure[$q=2$]{\includegraphics[width=5cm]{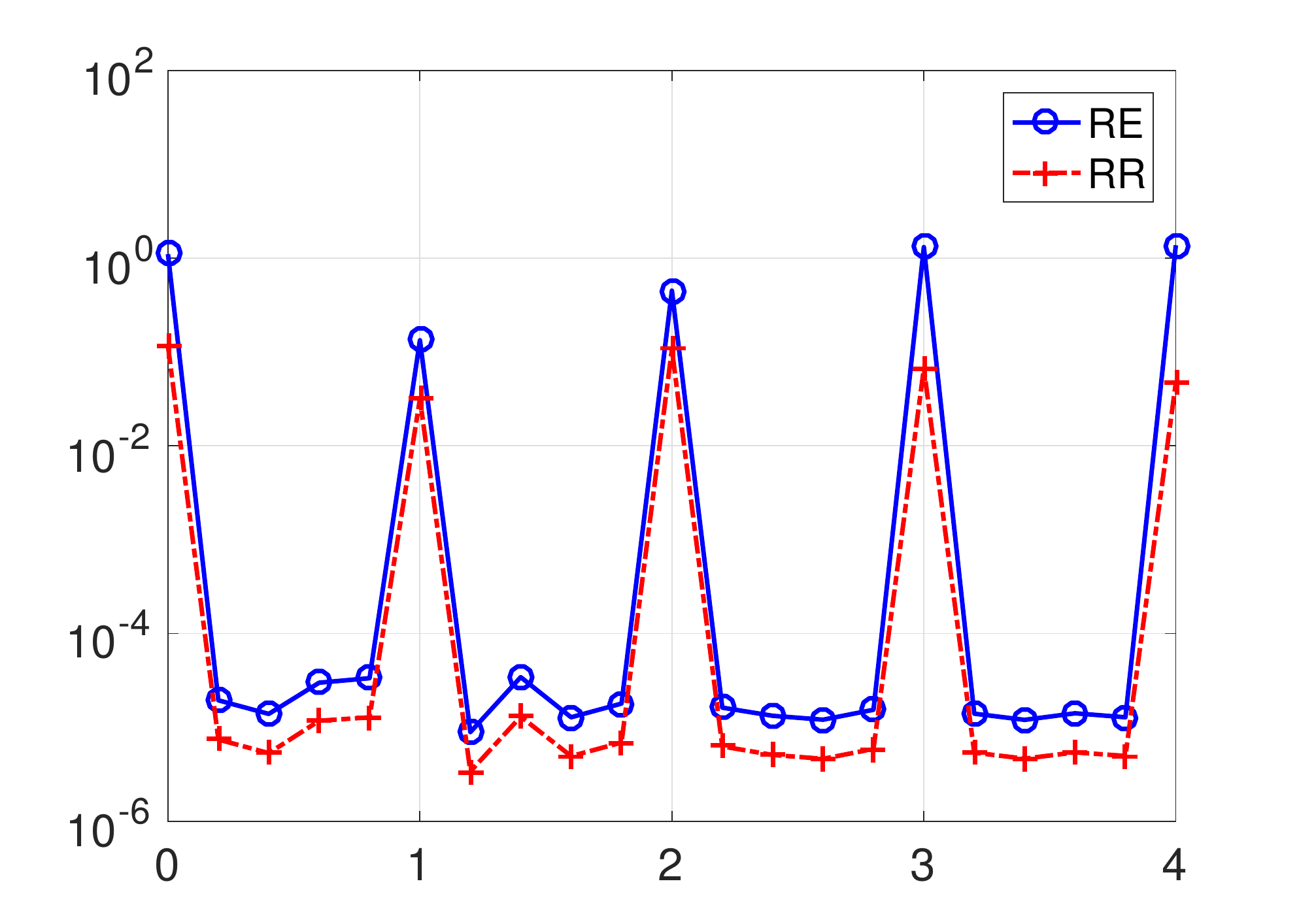}}
% \subfigure[$q=3$]{\includegraphics[width=5cm]{fig2/q3_300FDRAP}} 
 \subfigure[$q=4$]{\includegraphics[width=5cm]{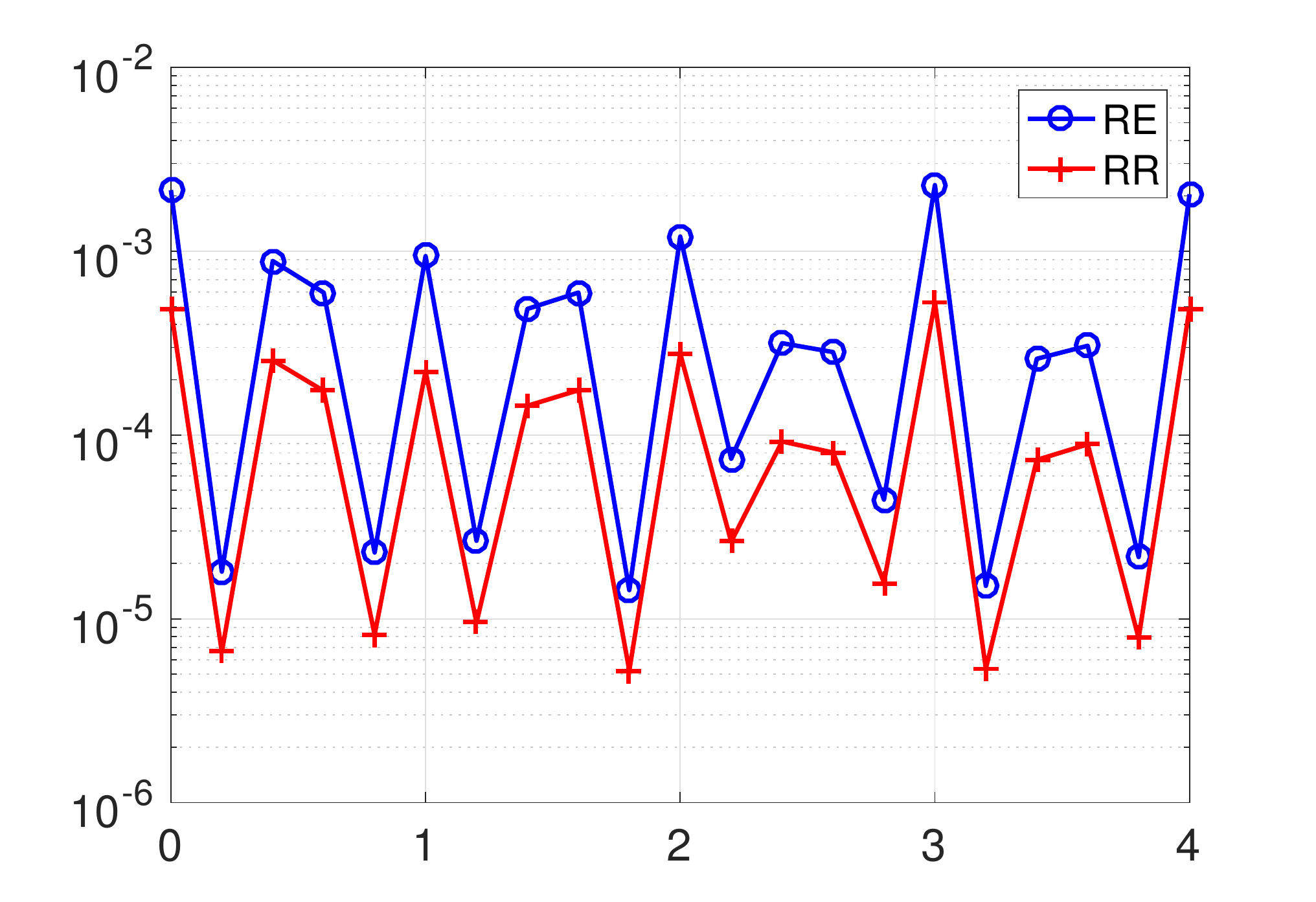}}
  \subfigure[$q=6$]{\includegraphics[width=5cm]{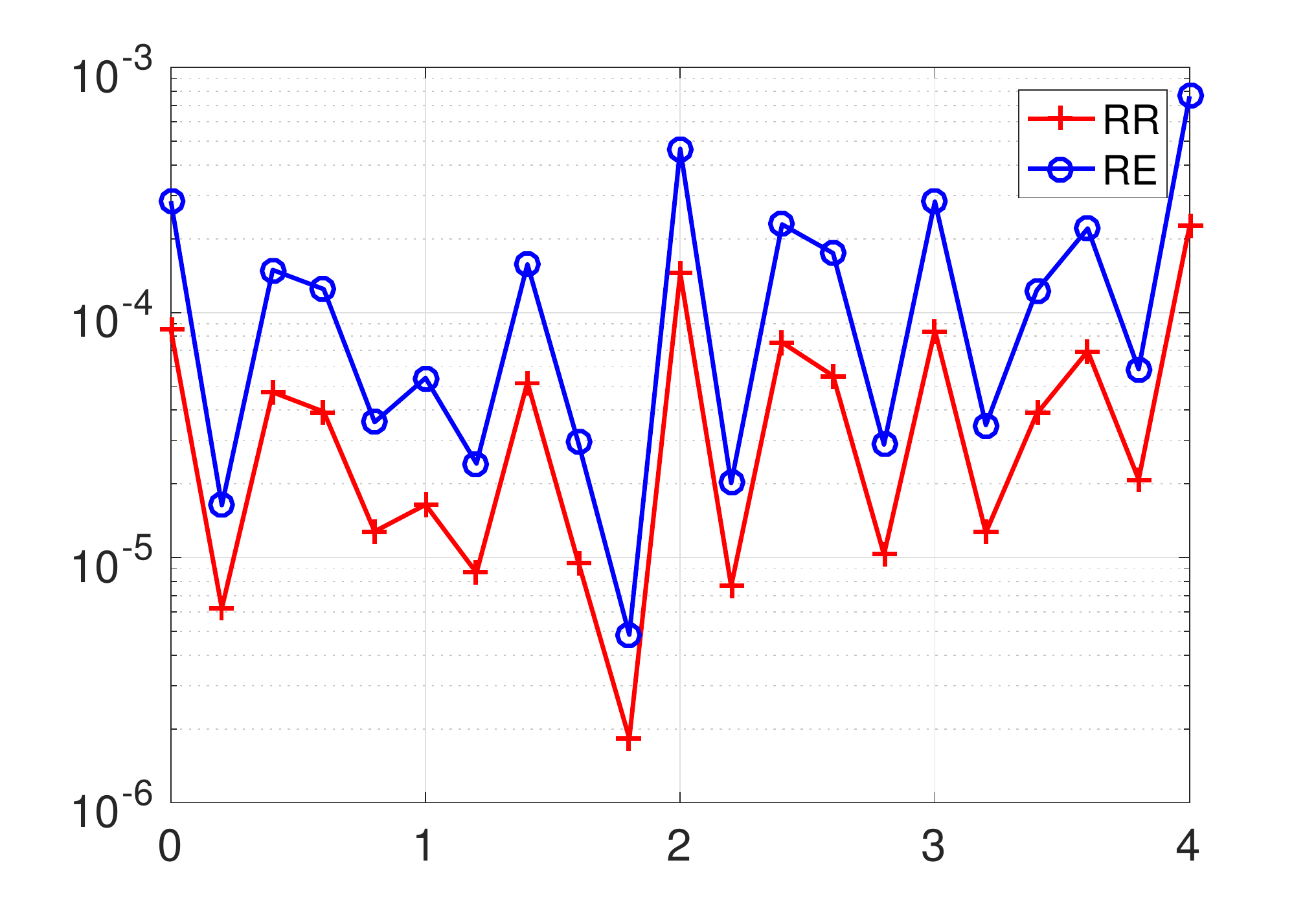}} 
  \caption{RE and RR on the semi-log scale  versus the parameter $\rho$ of the Fresnel mask with 200 FDR iterations followed by 100 AP iterations for $60\times 60$ RPP of the full angle range $ [0, 2\pi]$. 
  %The order of magnitude of fluctuation decreases as  $q$ increases from (a) $q=2$ to (b) $q=4$ to (c) $q=6$. 
  }
  \label{fig1}
  \end{figure}

\subsection{Effect of the mask} Fig. \ref{fig:different-object} shows RE versus 100 AP iterations
after the DR initialization 
with the i.i.d. mask and  two Fresnel masks for RPP of various angle ranges (legend). 
We see that the i.i.d. mask produces the best initialization and the fastest convergence rate
and that the Fresnel mask of a larger $\rho$ produces a better initialization and a better convergence rate
than the Fresnel mask of a smaller $\rho$ for all angle ranges. 

\begin{figure}
\centering   
\subfigure[i.i.d. mask]{\includegraphics[width=5cm]{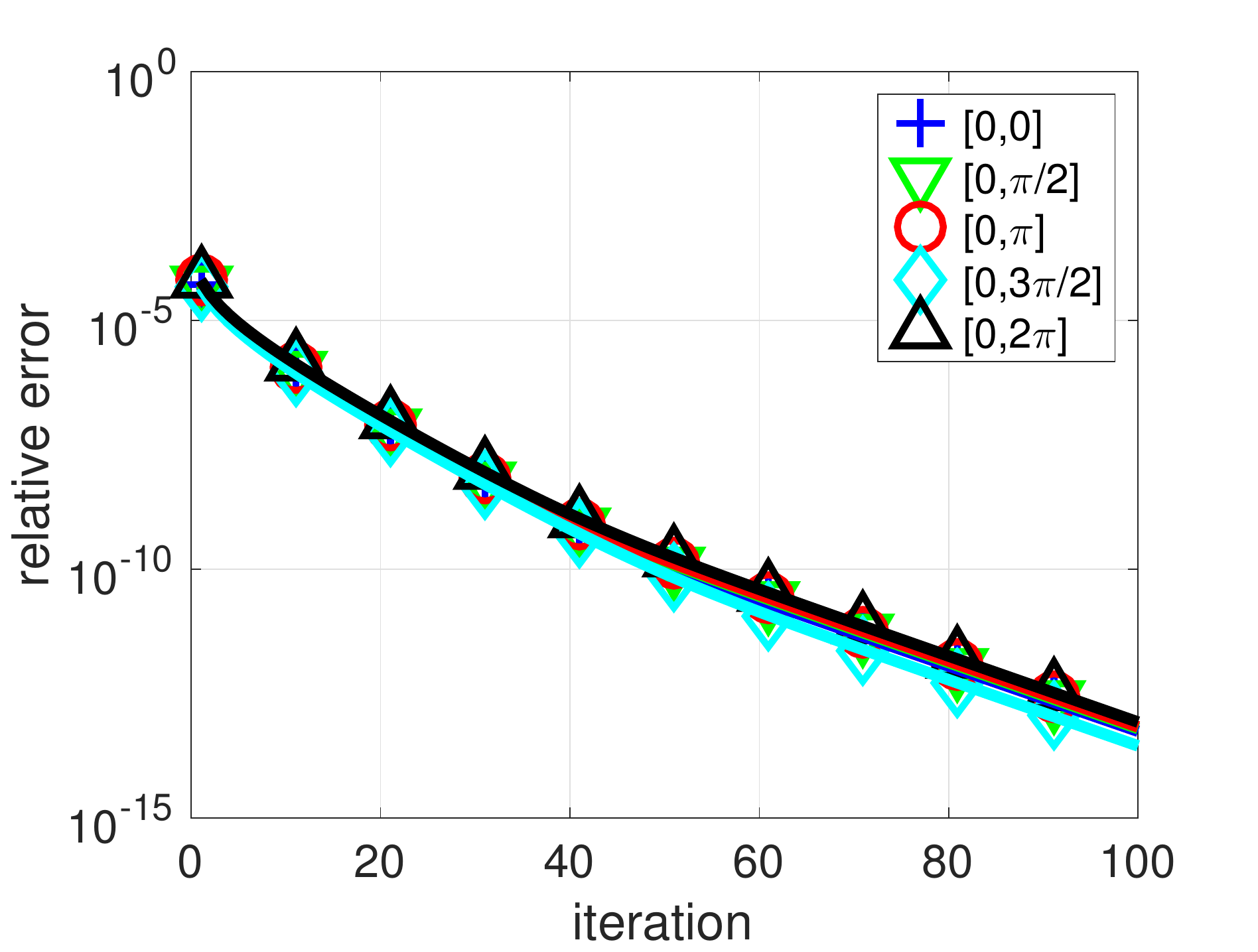}}
\subfigure[Fresnel mask with $\rho= \frac{6}{5\pi}$]{\includegraphics[width=5cm]{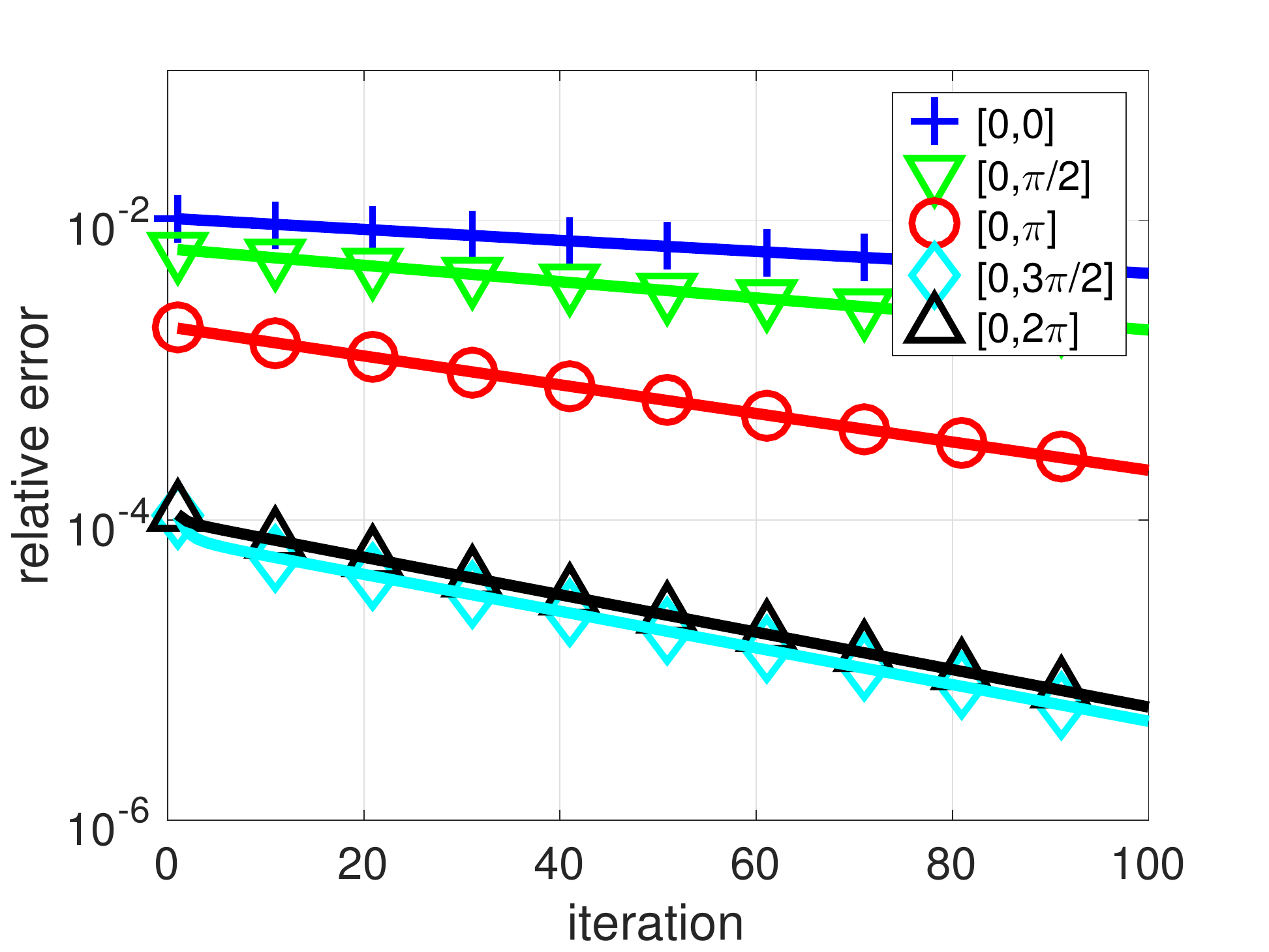}} 
\subfigure[Fresnel mask with $\rho= \frac{3}{25\pi}$]{\includegraphics[width=5cm]{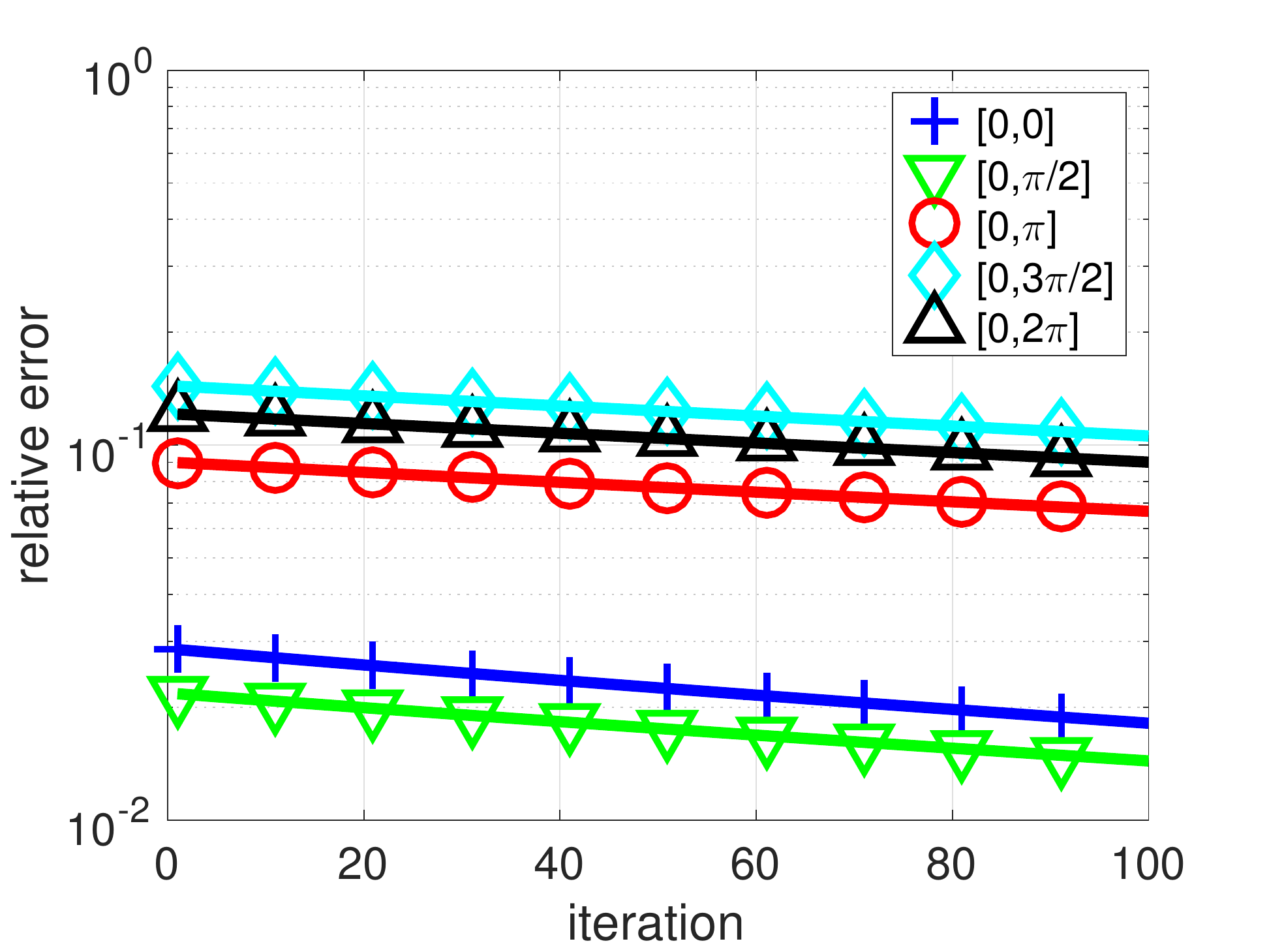}}  
 \caption{ RE on the semi-log scale for $60\times 60$ RPP of various angle ranges (legend) versus  100 AP iterations with initialization given by 300 DR iterations with $q=4$.}
     \label{fig:different-object}
\end{figure}

\subsection{Effect of $q$}

\begin{figure}
\centering
 \subfigure[i.i.d. mask]{\includegraphics[width=5cm]{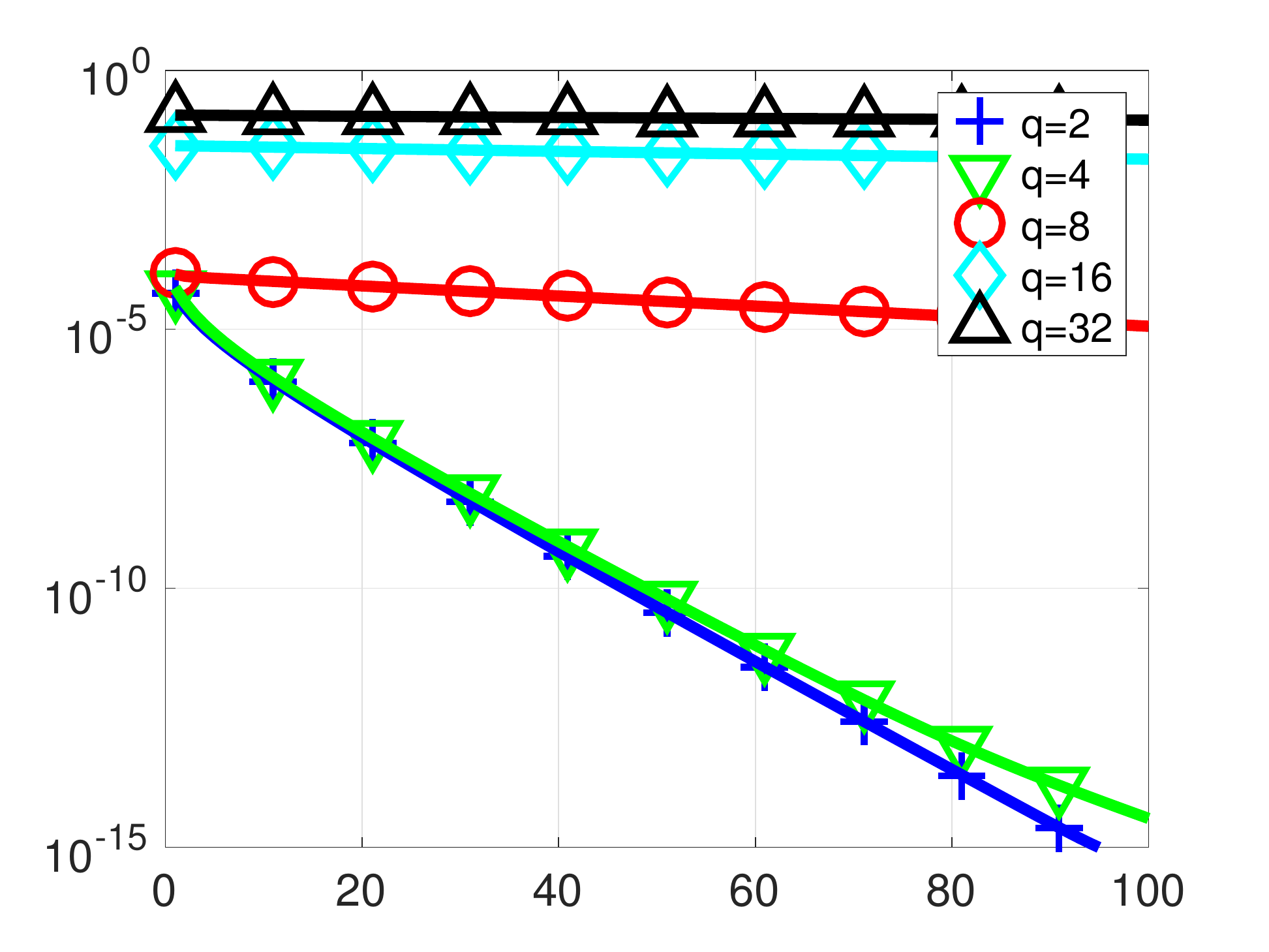}}
  \subfigure[Fresnel mask with $\rho={6\over 5\pi}$]{\includegraphics[width=5cm]{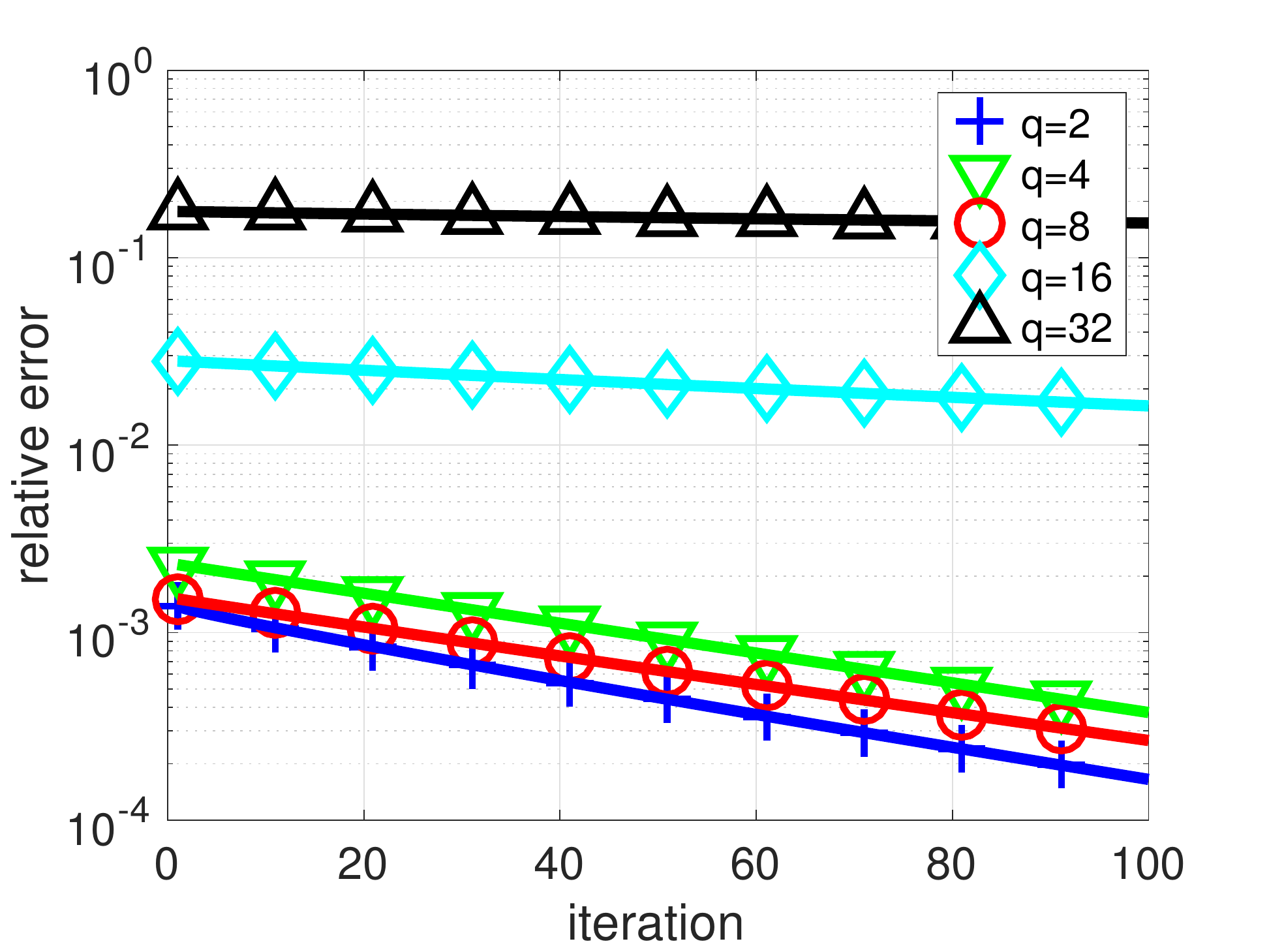}}
  \subfigure[Fresnel mask with $\rho={3\over 25\pi}$]{\includegraphics[width=5cm]{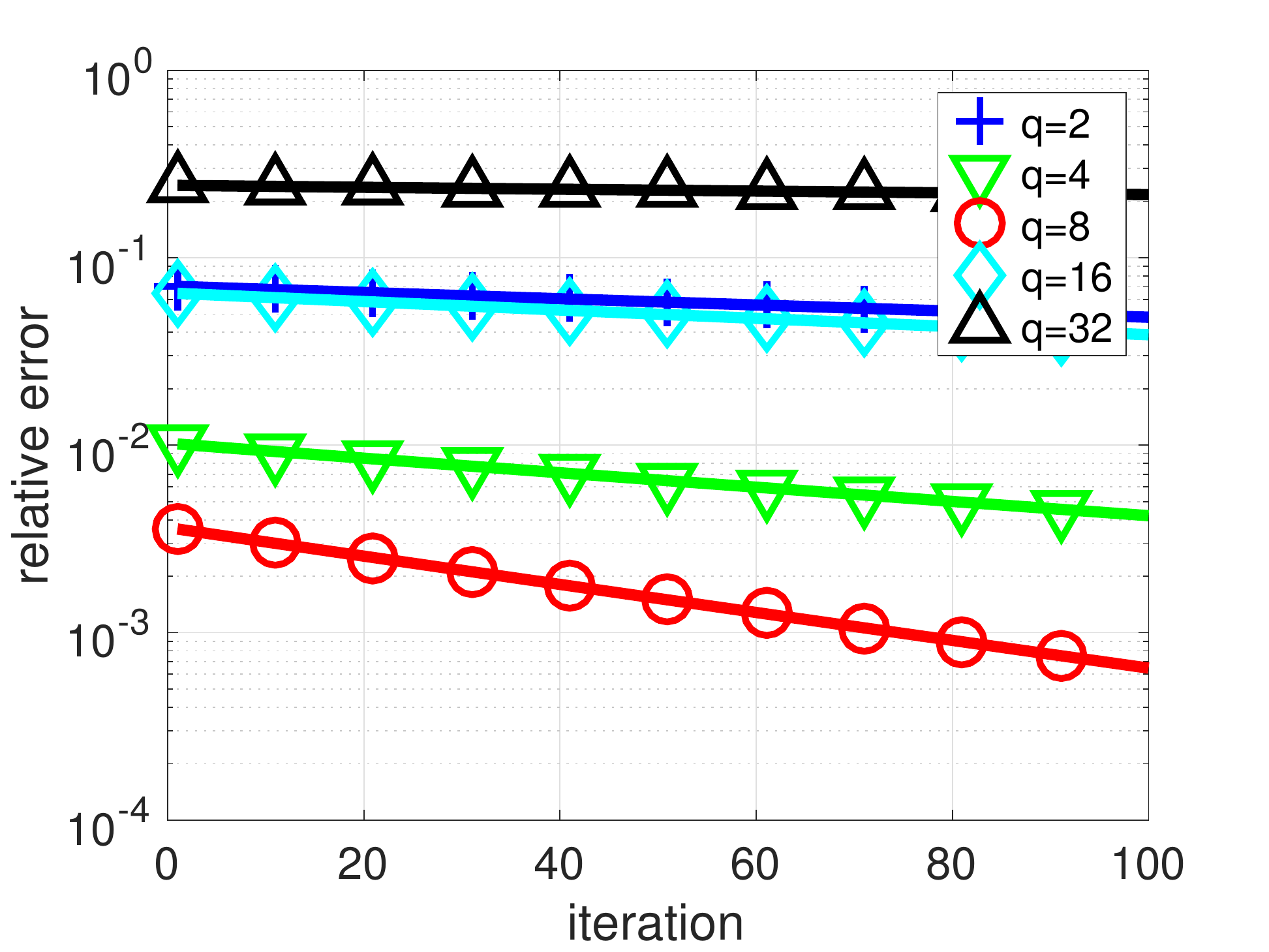}}
% \subfigure[]{\includegraphics[width=7cm]{fig2/c0Remore_d01AP}}
  \caption{ RE on the semi-log scale  for the $128\times 128 $ RPP of angle range $[0,2\pi]$ vs 100 AP iterations
  after initialization given by 300 DR iterations with various $q$.
  }  
  \label{fig:different-q}
\end{figure}
 
Fig. \ref{fig:different-q} shows RE versus 100 AP iterations
after the DR initialization with an i.i.d. random mask and two Fresnel masks with various $q$. Interestingly, we observe  that a mask of higher complexity (random or larger $\rho$) works better with a smaller value of $q$. But $q$ is as large as $32$, the results are always poor.

\subsection{Effect of noise}

\begin{figure}
  \subfigure[Random with $50\%$ overlap]{\includegraphics[width=5cm]{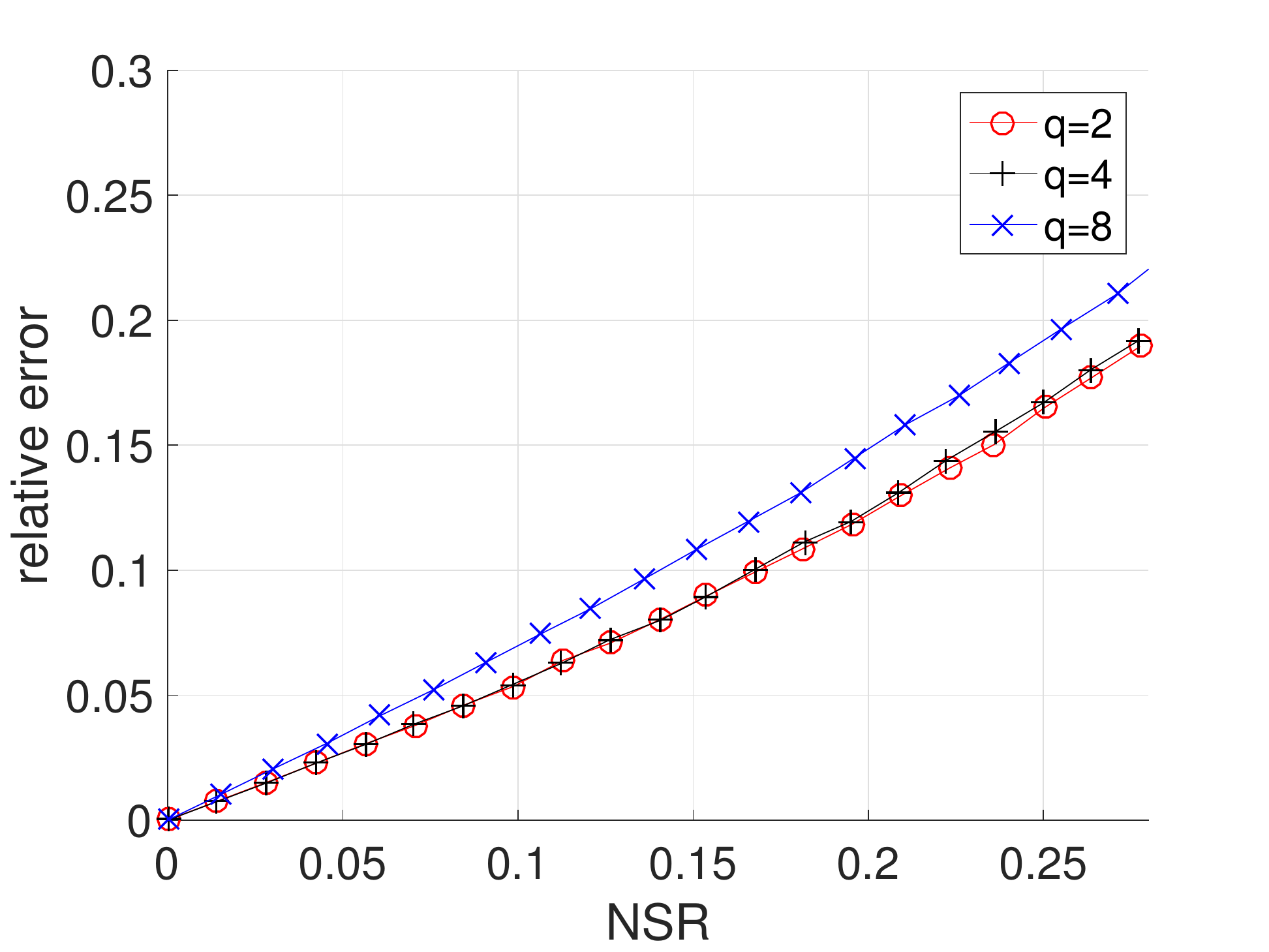}} 
      \subfigure[Fresnel with $\rho= {6\over 5\pi}$]{\includegraphics[width=5cm]{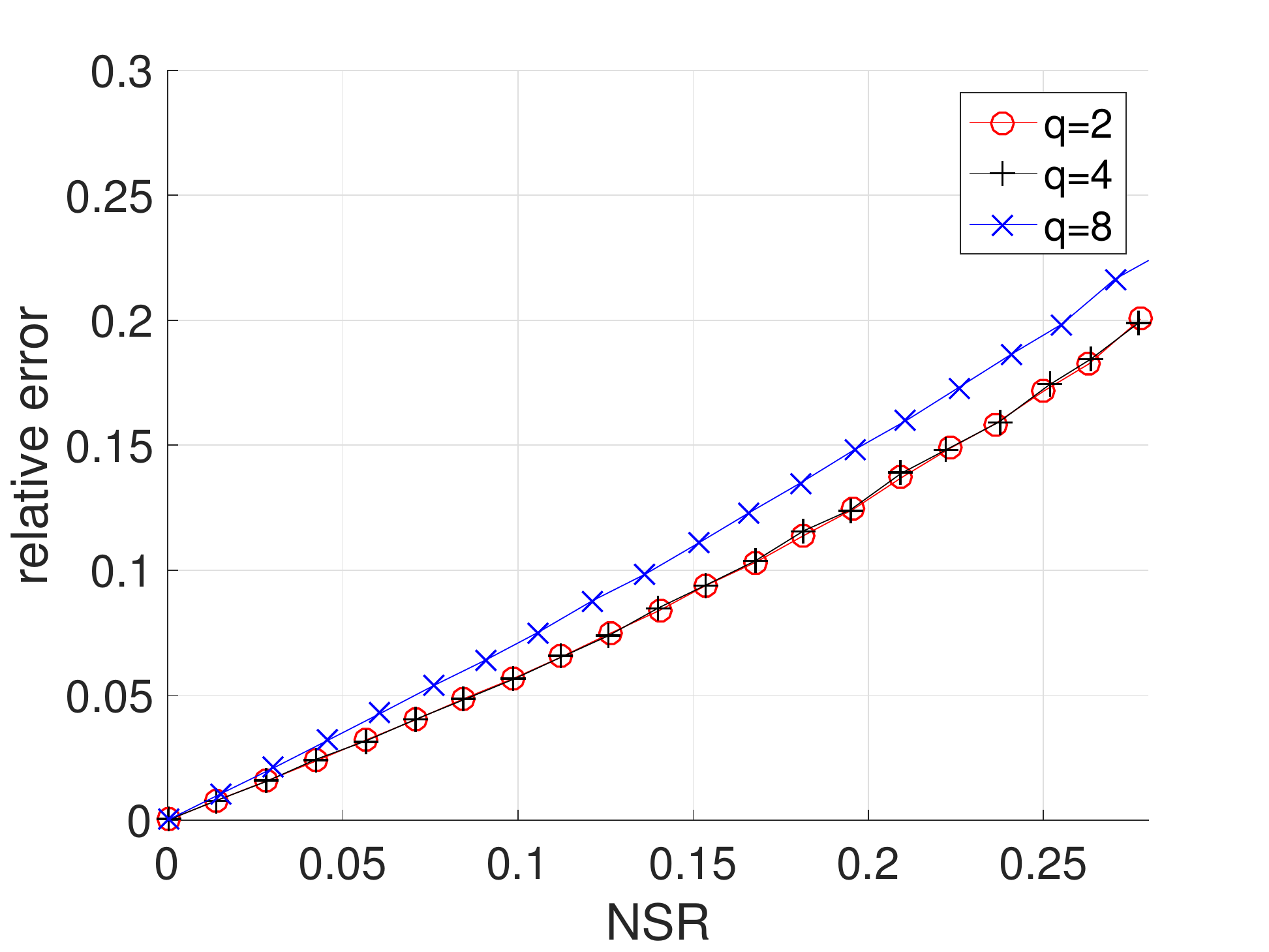}} 
       \subfigure[Fresnel with $\rho={3\over 25\pi}$]{\includegraphics[width=5cm]{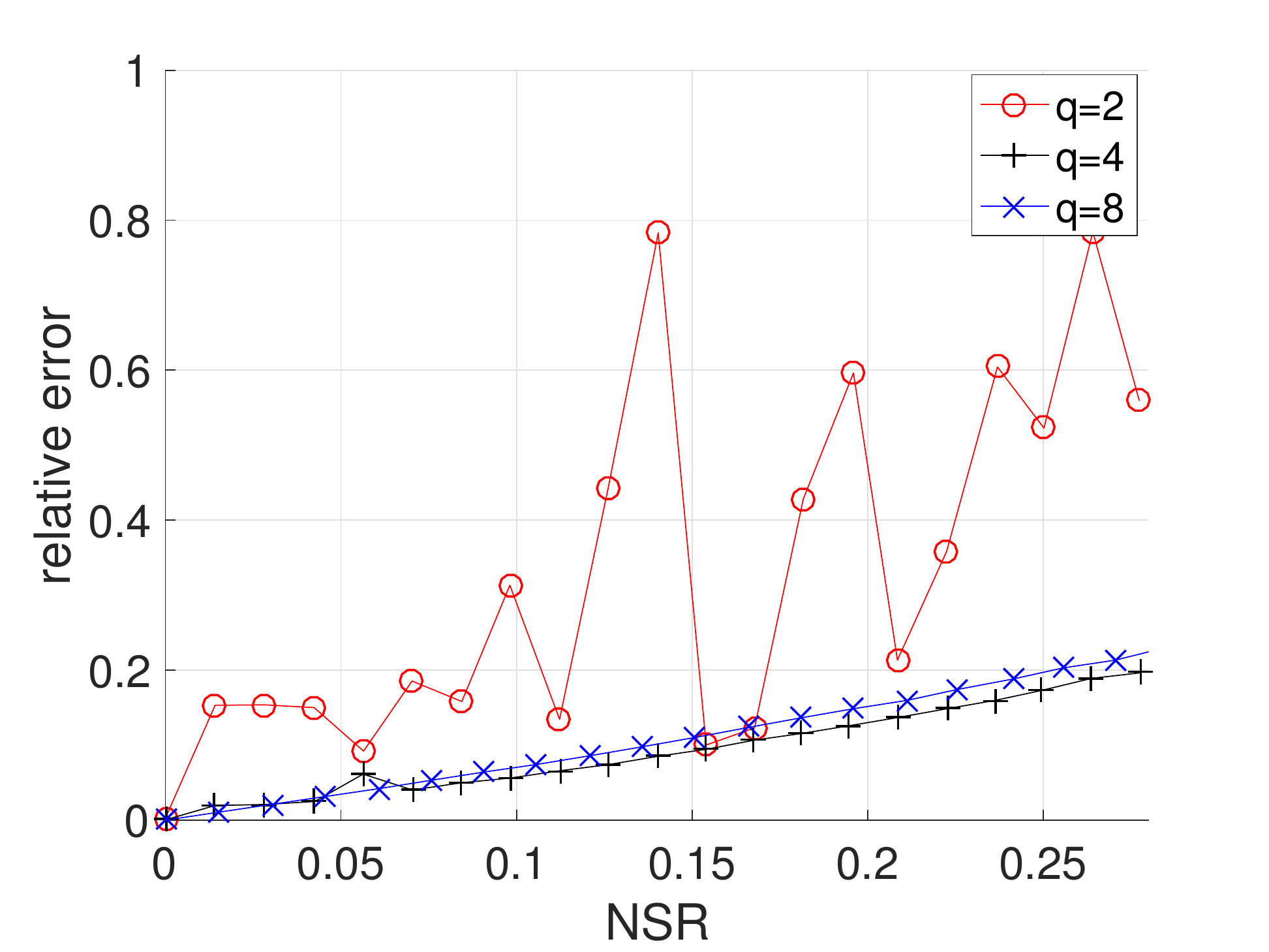} }
 \subfigure[Random with $75\%$ overlap]{\includegraphics[width=5cm]{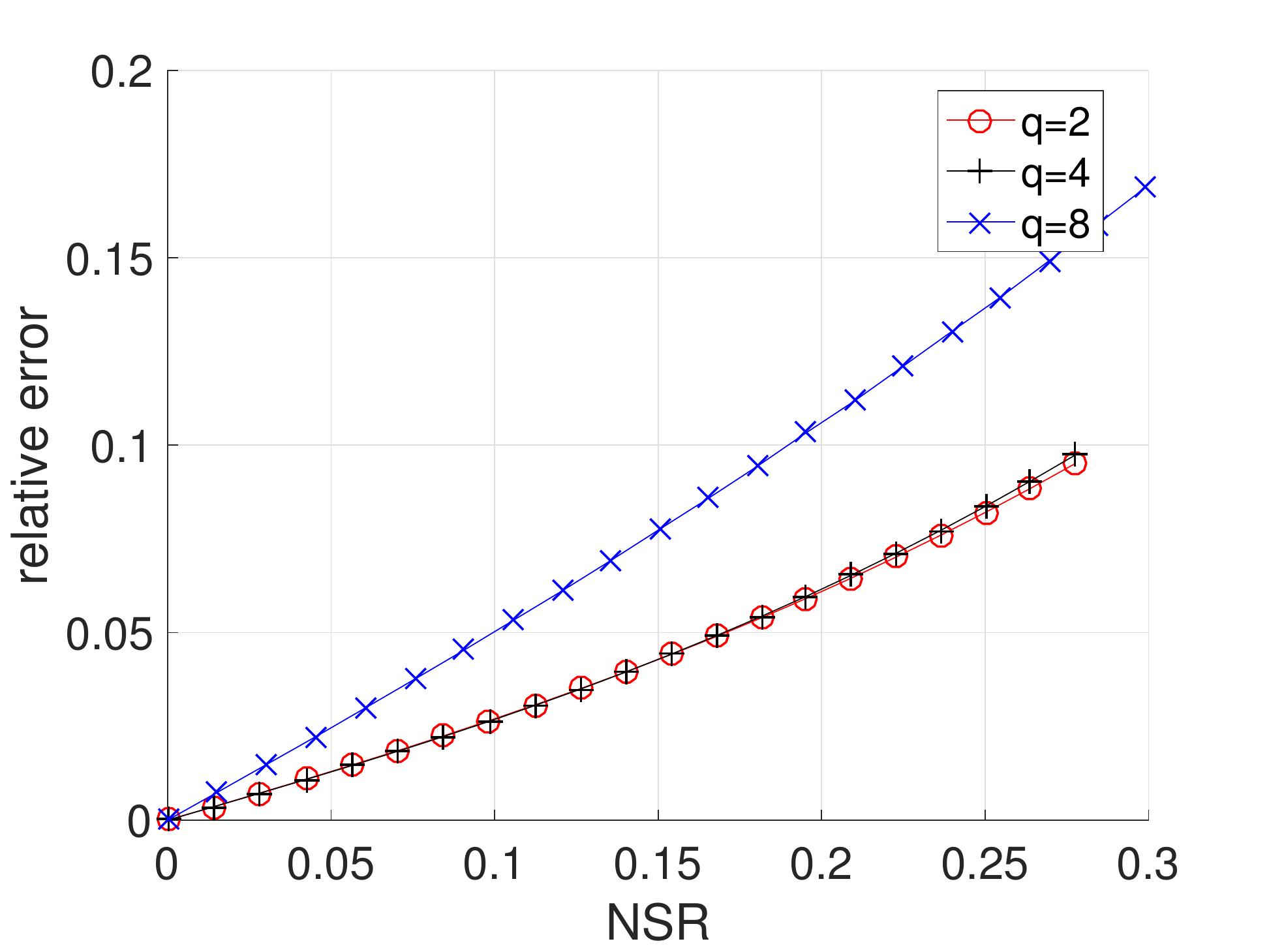}} 
 \subfigure[Fresnel with $\rho= {6\over 5\pi}$]{\includegraphics[width=5cm]{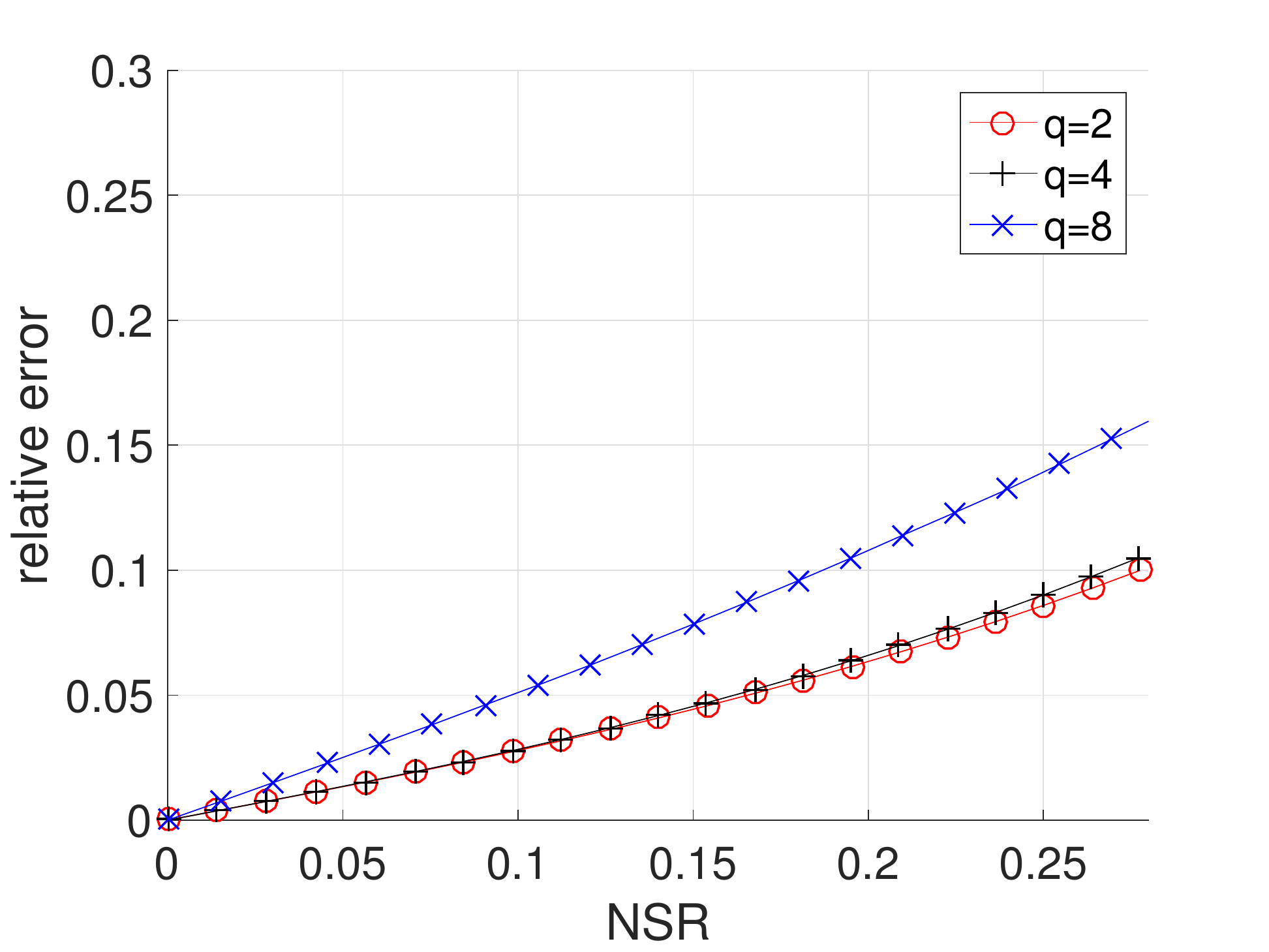}} 
 \subfigure[Fresnel with $\rho={3\over 25\pi}$]{\includegraphics[width=5cm]{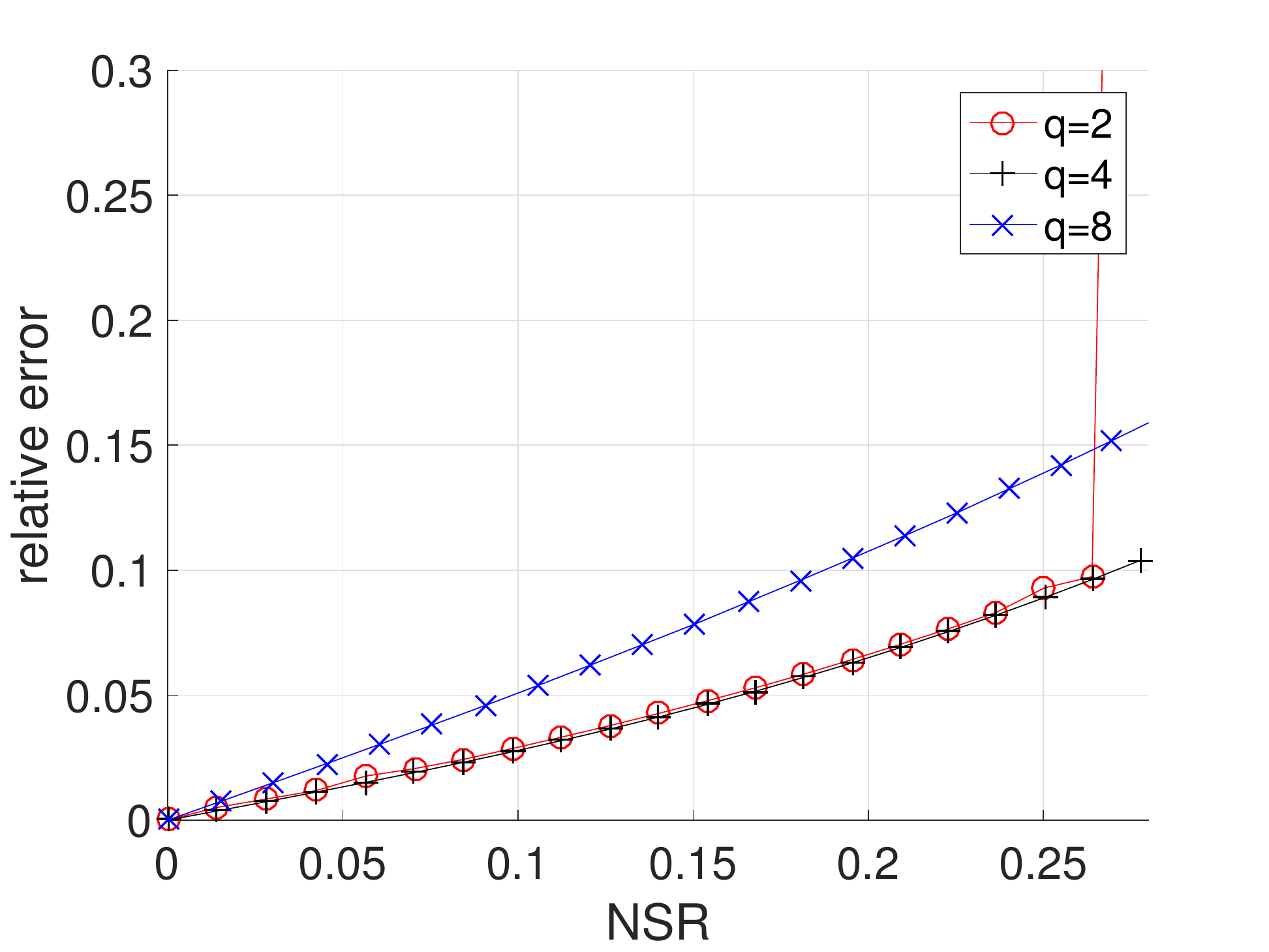}}
  \caption{ RE vs. NSR after 300 FDR iterations followed by 200 AP iterations for
   $64\times 64$ RPP of angle range $[0,2\pi]$ with  (a)-(c) $50\%$ and (d)-(f) $75\%$ overlap between adjacent masks.}  \label{fig:nsr1}\label{fig:nsr2}
    \end{figure}

%To model the phase and amplitude noise due to mechanical vibrations and air fluctuations
To introduce both phase and magnitude noises to our signal model, 
we add complex Gaussian noise to $A^* f$ before taking the modulus as data
\[
b=\lt|A^* f+ z\rt|
\]
where $z\in \IC^N$ is an i.i.d.  circularly symmetric complex Gaussian random vector. 
The size of the noise is measured in terms of the noise-to-signal ratio (NSR)
\beq
\mbox{\rm NSR}= \frac{\|b-|A^* f|\|}{\|A^* f\|}
\eeq

We also test the effect of increasing the overlap of adjacent masks
from $50\%$ to $75\%$: 
With the same relation $m=2n/q$, $75\%$ overlap between adjacent masks 
corresponds to $4q^2$ diffraction patterns.

Fig. \ref{fig:nsr1} shows  RE versus NSR for various
masks and $q$ with (top) $50\%$ and (bottom) $75\%$ overlap. 
The correlated  masks  used are shown in Fig. \ref{fig:mask} (d)-(f)  with $m/\ell=4$ so that the complexity of the mask is independent of $q$. 

For up to $25\%$ NSR, the RE-NSR curves in Fig. \ref{fig:nsr1} are roughly straight lines of a slope less than 1, with $q=4$ the best performing value across the board. 
The violent fluctuations in (c) for $q=2$ indicates non-convergent behaviors consistent with Fig. \ref{fig:different-q}(c) with $q=2$.  

Fig. \ref{fig:nsr1}(d)-(f) shows that the four times number of data with $75\%$ overlap predictably result in a significantly reduced RE, especially for $q=2,4$.

\section{Conclusion and discussion}\label{sec:con}
In the present work, we have proved the uniqueness theorem (Theorem \ref{thm:u}, Corollary \ref{cor:u}) for any
ptychographic scheme with an independent random masks under
the minimum overlap condition \eqref{two2}. We have also given a local geometric convergence analysis
for AP and DR algorithms (Theorem \ref{thm1}).
We have shown that  DR has a unique fixed point in the object domain (Proposition \ref{prop4}) and
given a simple criterion for distinguishing the true solution among possibly many fixed points of AP (Proposition \ref{prop:ap}). We chose AP and DR as the building blocks of
our reconstruction algorithm because of the fixed point and convergence properties
and because there are no adjustable parameters which can be tuned to optimize the performance as in other algorithms \cite{ADM, Hesse}. 

\commentout{We choose AP and DR as the building blocks of
our reconstruction algorithm because of the fixed point and convergence analysis given in Section \ref{sec:ITA}
and because there are no adjustable parameters which can be tuned to optimize the performance as in other algorithms \cite{ADM, Hesse}. 
We do not claim that this combination  
yields the best algorithm for ptychography.  Quite the contrary, our approach can be
significantly improved,  for example, in the initialization step where we could have picked
the DR iterate of the least relative residual within a given number of iterations
as the initialization for the subsequent AP iterations. This modification
on initialization, however,  
would put an extra burden on computer memory. 
}

We have proposed a minimalist scheme parametrized by  $q=2n/m$ where $m$ is the number of mask pixels in each direction and given a lower bound on the  geometric rate of convergence (Proposition \ref{prop:gap}). 
The bound $\gamma >1-C/q^2$ predicts a poor performance for  the minimalist scheme  with large $q$ which
is confirmed by our numerical experiments. 

In addition, we have performed extensive numerical experiments to find out  what the general features of a well-performing mask are like, what the best-performing  values of  $q$ for a given mask are,  how robust the minimalist scheme is with respect to measurement noise and what the significant factors affecting
the noise stability are. 

From our numerical experiments, we  have found that 
(i) the mask of higher complexity (e.g. random masks or the Fresnel mask of a larger $\rho$) produces faster convergence in reconstruction than the mask of lower complexity (e.g. the Fresnel mask of a smaller $\rho$);  (ii) the best-performing value of $q$ for a mask of higher complexity is  smaller than that  for a mask of 
lower complexity; (iii) ptychographic reconstruction with medium values of $q$ (e.g. $q\in [4,8]$) is robust 
with respect to measurement noise regardless of the mask used, with the ratio of $\mbox{RE}$ to $\mbox{\rm NSR}$ less than unity;
(iv) increased overlap between adjacent masks generally reduces the reconstruction error as expected.

We have not addressed an important potential of ptychography  for retrieving the mask
and the object simultaneously without knowing precisely the mask function (blind ptychography). We have previously proved the simultaneous determination of a roughly known mask and the object for a nonptychographic setting \cite{pum}.  As a ptychographic reconstruction offers extra potential beyond a nonptychographic one, 
we will turn to blind ptychography in a forthcoming paper.

\appendix

\section{ Twin image with a  Fresnel mask}
We give a sufficient condition for  the existence of twin image with the Fresnel masks
that  satisfies the symmetry of conjugate inversion, which we believe explain the
poor performance of Fig. \ref{fig:different-q} (right column) for $c=0$ and
spikes in Fig. \ref{fig1} both for $q=2$. 

%Let $R_1, R_2$ be the horizontal and vertical  reflections, respectively. 
Let  $Q_mx$ be the conjugate inversion of $x\in \IC^{m\times m}$, i.e. 
$(Q_m x)_{ij}=\overline{x}_{m+1-i,m+1-j}.$ 
For an even integer $m$, write
\[
x=\lt[\begin{matrix}
x_1&x_2\\
x_3&x_4
\end{matrix}
\rt],\quad x_j\in \IC^{m/2\times m/2},\quad j=1,2,3,4,
\]
and we have
\[
Q_m x=\lt[\begin{matrix}
Q_{m/2}x_4&Q_{m/2} x_3\\
Q_{m/2}x_2&Q_{m/2}x_1
\end{matrix}
\rt].
\]
For ease of notation, we will omit writing the subscript in $Q$. 
 \commentout{
 \[
 Q_mx=
Q \left[
\begin{array}{cccc}
 x_{1,1} & x_{1,2} & \ldots & x_{1,m} \\
x_{2,1} & x_{1,2} & \ldots & x_{2,m} \\
\vdots  & \vdots & \vdots & \vdots \\
x_{m,1} & x_{m,2} & \ldots & x_{m,m}
\end{array}
\right]
=
 \left[
\begin{array}{cccc}
 \overline{x}_{m,m} & \overline{x}_{m,m-1}& \ldots & \overline{x}_{m,1} \\
 \overline{x}_{m-1,m} & \overline{x}_{m-1,m-1} & \ldots & \overline{x}_{m-1,1} \\
\vdots  & \vdots & \vdots & \vdots \\
 \overline{x}_{1,m} & \overline{x}_{1,m-1} & \ldots & \overline{x}_{1,1}
\end{array}
\right].
 \] %
 }
\begin{prop}\label{fo} 
\commentout{Let $m/2=n/q\in \IZ$ be a positive integer. Let $Q$ be the flipped-over operator for $\IC^{m/2\times m/2}$.
Let $\beta_1,\beta_2$ be  some real numbers.
 Consider the type-2 $m/2\times m/2$ mask 
\[ g(k_1,k_2; \beta_1, \beta_2)=\exp\{2\pi ci [(k_1-\beta_1)^2+(k_2-\beta_2)^2] (m/2)^{-1}\}, \; k_1,k_2=1,\ldots, m.\]
}
Let $\rho\in \IZ$ and  $\mu\in \IC^{m\times m}$ be the Fresnel mask with the elements
\beq
\label{65}
\mu^0(k_1,k_2)=\exp\lt\{\im \pi \rho ((k_1-\beta_1)^2+(k_2-\beta_2)^2)/m\rt\}, \quad k_1, k_2=1,\ldots, m.
\eeq
For an even integer $m$, the matrix  \beq \label{Hform}\overline{Q \mu }\odot \mu:=
\left(
\begin{array}{ccc}
  h_1&  h_2   \\
  h_3 &   h_4   
\end{array}
\right),\quad h_j\in \IC^{m/2\times m/2},\quad j=1,2,3,4,\eeq
satisfies the symmetry
\beq
\label{68} h_1=h_4=\gamma h_2=\gamma h_3,\; \gamma=(-1)^{\rho+2cm}.
\eeq

\end{prop}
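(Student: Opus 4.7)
The plan is to exploit the separability of the Fresnel phase and reduce the four-block identity to a one-dimensional calculation. Since the exponent of $\mu$ in \eqref{65} is a sum of a function of $k_1$ alone and a function of $k_2$ alone, the mask factors as $\mu(k_1,k_2) = \alpha(k_1;\beta_1)\alpha(k_2;\beta_2)$ with $\alpha(k;\beta) := \exp\{\im\pi\rho(k-\beta)^2/m\}$. The conjugate inversion satisfies $(\overline{Q\mu})(k_1,k_2) = \mu(m+1-k_1,m+1-k_2)$, which factors the same way, and hence so does the Hadamard product $H := \overline{Q\mu}\odot\mu$. Explicitly, $H(k_1,k_2) = \psi_1(k_1)\psi_2(k_2)$ with $\psi_j(k) := \alpha(k;\beta_j)\alpha(m+1-k;\beta_j)$.

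The core step is to evaluate the one-dimensional shift ratio $\psi_j(k+m/2)/\psi_j(k)$ and observe that it is a constant $\epsilon \in \{\pm 1\}$ independent of $k$ and of $\beta_j$. Setting $F(k,\beta) := (k-\beta)^2 + (m+1-k-\beta)^2$, a direct expansion using differences of squares gives $F(k+m/2,\beta) - F(k,\beta) = 2km - m^2/2 - m$, which is linear in $k$ and independent of $\beta$. Multiplying by $\pi\rho/m$ and using $\rho\in\IZ$, the $k$-dependent piece $2\pi\rho k$ is a multiple of $2\pi$, and the residue is the $k$-independent phase $(-1)^{\rho(m/2+1)}$. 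This is the one place the integrality of $\rho$ is used in an essential way, and the fact that $m$ is even is what makes the residue real.

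With $\epsilon$ in hand, the block identities follow immediately from the tensor-product structure of $H$. Indexing each $h_\ell$ on its natural $(i,j)\in\{1,\ldots,m/2\}^2$, we have $h_1(i,j) = \psi_1(i)\psi_2(j)$, $h_2(i,j) = \epsilon\, h_1(i,j)$, $h_3(i,j) = \epsilon\, h_1(i,j)$, and $h_4(i,j) = \epsilon^2 h_1(i,j) = h_1(i,j)$ since $\epsilon^2 = 1$. Setting $\gamma := \epsilon$ yields $h_1 = h_4 = \gamma h_2 = \gamma h_3$ as claimed. The main obstacle is purely arithmetic bookkeeping in the differences-of-squares expansion; once one notices the separability $H = \psi_1\otimes\psi_2$, there is no conceptual difficulty left.
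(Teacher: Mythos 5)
Your proof is correct, and it reaches the same conclusion by a cleaner decomposition than the paper's. The paper partitions $\mu$ into four $m/2\times m/2$ blocks $\mu_1,\dots,\mu_4$, writes $\overline{Q\mu}\odot\mu$ blockwise as $\overline{Q\mu_4}\odot\mu_1$, $\overline{Q\mu_3}\odot\mu_2$, etc., and asserts the four two-dimensional identities by ``direct algebra.'' You instead observe that the Fresnel phase is separable, so that $H=\overline{Q\mu}\odot\mu$ is the rank-one tensor product $\psi_1\otimes\psi_2$ with $\psi_j(k)=\exp\{\im\pi\rho[(k-\beta_j)^2+(m+1-k-\beta_j)^2]/m\}$, which reduces the whole four-block verification to the single one-dimensional identity $\psi_j(k+m/2)=\epsilon\,\psi_j(k)$. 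Your expansion $F(k+m/2,\beta)-F(k,\beta)=2km-m^2/2-m$ is correct, and it cleanly isolates where $\rho\in\IZ$ (to kill the $k$-dependent phase $2\pi\rho k$) and the evenness of $m$ (to make the residual phase a sign) enter. A side benefit: your computation gives the explicit value $\gamma=\epsilon=(-1)^{\rho(m/2+1)}$, whereas the paper's stated $\gamma=(-1)^{\rho+2cm}$ involves a constant $c$ that is never defined in the final text (it is a leftover from a commented-out parametrization in which $\rho=4c$; under that identification the two expressions agree). So your argument both verifies the proposition and disambiguates the sign formula.
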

\begin{proof}
With
\beq
 \mu=\left(
\begin{array}{cc}
 \mu_1 & \mu_2 \\
\mu_3  & \mu_4
\end{array}
\right),\quad \mu_j\in \IC^{m/2\times m/2},\quad j=1,2,3,4,\eeq
we have 
\[
\overline{Q\mu}\odot \mu=\lt[\begin{matrix}
\overline{Q \mu_4}\odot \mu_1& \overline{Q \mu_3}\odot \mu_2\\
\overline{Q \mu_2}\odot \mu_3&\overline{Q \mu_1}\odot \mu_4
\end{matrix}\rt]
\]
\commentout{
with 
 \begin{eqnarray}
&&\mu_1=[g(k_1,k_2; \beta_1, \beta_2): k_1,k_2=1,\ldots, m/2],\\
&&\mu_2=[g(k_1,k_2; \beta_1+m/2, \beta_2): k_1,k_2=1,\ldots, m/2],\\
&&g_3=[g(k_1,k_2; \beta_1, \beta_2+m/2): k_1,k_2=1,\ldots, m/2],\\
&&g_4=[g(k_1,k_2; \beta_1+m/2, \beta_2+m/2): k_1,k_2=1,\ldots, m/2].
\end{eqnarray}
\begin{eqnarray*}
&&g_4\odot \overline{Q g_1}=\exp\{
2\pi ci [
(k_1-\beta_1-m/2)^2+(k_2-\beta_2-m/2)^2+(m/2-k_1-\beta_1+1)^2+(m/2-k_2-\beta_2+1)^2] (m/2)^{-1}\}
\\
&&
g_1\odot \overline{Q g_4}=\exp\{
2\pi ci [
(k_1-\beta_1)^2+(k_2-\beta_2)^2+(-k_1-\beta_1+1)^2+(-k_2-\beta_2+1)^2] (m/2)^{-1}\}
\\
&&
g_2\odot \overline{Q g_3}=\exp\{
2\pi ci [
(k_1-\beta_1-m/2)^2+(k_2-\beta_2)^2+(m/2-k_1-\beta_1+1)^2+(-k_2-\beta_2+1)^2] (m/2)^{-1}\}
\\
&&
g_3\odot \overline{Q g_2}=\exp\{
2\pi ci [
(k_1-\beta_1)^2+(k_2-\beta_2-m/2)^2+(-k_1-\beta_1+1)^2+(m/2+1-k_2-\beta_2)^2] (m/2)^{-1}\}
\end{eqnarray*}
}
Direct algebra with \eqref{65} and $\rho\in \IZ$ gives 
\begin{eqnarray*}
&&\mu_4\odot \overline{Q \mu_1}=
\mu_2\odot \overline{Q \mu_3} 
\exp\{ (\rho+2cm)\pi \im\}=\mu_3\cdot \overline{Q \mu_2}\exp\{(\rho+2mc)\pi\im\}=\mu_1\cdot \overline{Q \mu_4}
%\exp\{4[-k_1-k_2+1+\frac{m}{2}]\{2\pi c i\}\}.
\end{eqnarray*}
and hence the desired result.
\end{proof}

%\subsection{Case of $q=2$} 

Let $\Phi$ be the oversampled Fourier matrix as before. 
Note that the oversampled Fourier magnitude has the symmetry of conjugate inversion:
\beq\label{fact}|\Phi x|=|\Phi Q x|,\quad\forall x\in \IC^{m\times m}.\eeq 
\begin{prop}
\label{prop:twin} Let $q=2$ (hence $m=2n/q=n$). Suppose $\mu$ satisfies the symmetry \eqref{Hform}-\eqref{68}.
Let $h$ be the matrix given in \eqref{Hform}.  
For any  $x\in\IC^{m\times m}$, let
\[
y=Qx \odot \overline{h}.\]
Then $y$ and $x$ produce  the same ptychographic  data for $q=2$ 
with the mask $\mu$. 
\end{prop}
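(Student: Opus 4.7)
The plan is to reduce the claim to a joint-eigenvector property of $\overline{h}$ under the four cyclic shifts appearing in the $q=2$ scheme. With the periodic boundary condition of Section \ref{sec:scheme}, each ptychographic window is the cyclic shift $S^{kl}x$ of the object by $(km/2,lm/2)\pmod m$, where $(k,l)\in\{0,1\}^2$ and $m=n$. By \eqref{fact}, it suffices to exhibit unit-modulus scalars $c_{kl}$ such that
\[
\mu \odot S^{kl}y \;=\; c_{kl}\, Q(\mu \odot S^{kl}x), \quad (k,l)\in\{0,1\}^2,
\]
since then $|\Phi(\mu \odot S^{kl}y)|=|\Phi(\mu \odot S^{kl}x)|$ for every $(k,l)$, which is exactly equality of the ptychographic data produced by $y$ and $x$.

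For the reduction, I would use three elementary observations: $Q$ distributes over Hadamard products, $Q$ commutes with every half-period shift $S^{kl}$ (a direct index check modulo $m$ shows $m+1-i\pm km/2$ agree mod $m$), and $\mu\odot\overline{\mu}=1$ since $|\mu|\equiv 1$. Substituting $y=Qx\odot\overline{h}$ and using the identity $\mu\odot\overline{h}=Q\mu$ (which follows from $h=\overline{Q\mu}\odot\mu$), the target identity collapses to
\[
S^{kl}(\overline{h}) \;=\; c_{kl}\,\overline{h}.
\]
This is a joint-eigenvector condition, and it is precisely here that the symmetry of Proposition \ref{fo} is invoked. From $h_1=h_4=\gamma h_2=\gamma h_3$ with $\gamma\in\{\pm1\}$, one can write $\overline{h}$ in the $2\times 2$ block form $\overline{h}_1\bigl[\begin{smallmatrix}1&\gamma\\ \gamma&1\end{smallmatrix}\bigr]$. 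The shifts $S^{10}$ and $S^{01}$ swap adjacent $m/2\times m/2$ block-columns and block-rows respectively, while $S^{11}$ swaps diagonal blocks; a direct block computation then gives $S^{10}\overline{h}=S^{01}\overline{h}=\gamma\overline{h}$ and $S^{11}\overline{h}=\overline{h}$, i.e.\ $c_{kl}=\gamma^{k+l}\in\{\pm1\}$.

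Assembling the pieces yields
\[
\mu\odot S^{kl}y \;=\; \mu\odot Q(S^{kl}x)\odot S^{kl}(\overline{h}) \;=\; \gamma^{k+l}\bigl(\mu\odot\overline{h}\bigr)\odot Q(S^{kl}x) \;=\; \gamma^{k+l}\, Q(\mu\odot S^{kl}x),
\]
and \eqref{fact} finishes the argument. The main obstacle is bookkeeping: one must simultaneously track how each half-period cyclic shift acts on the $2\times 2$ block structure of both $\overline{h}$ and $Qx$, and verify that these actions combine consistently with $Q$ and the Hadamard product. Once this bookkeeping is set up, the remaining algebra is purely mechanical.
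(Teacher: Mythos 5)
Your proposal is correct and follows essentially the same route as the paper: both arguments reduce each of the four $q=2$ measurements via \eqref{fact} to the identity $\mu\odot\overline{h}=Q\mu$ together with the block symmetry $h=\gamma R_1h=\gamma R_2h=R_2R_1h$ of Proposition \ref{fo} (your "joint-eigenvector" condition $S^{kl}\overline{h}=\gamma^{k+l}\overline{h}$ is exactly the paper's \eqref{part1}, with $R_1,R_2$ being the half-period shifts). Your version merely makes explicit the commutation $QS^{kl}=S^{kl}Q$ that the paper uses implicitly when identifying $y',y'',y'''$ with the shifted copies of $y$.
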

\begin{proof} 
We can write 
\[ y
=\alpha \left[
\begin{array}{cc}
 Qx_4 \odot \overline{h_1} & Qx_3\odot  \overline{ h_2}  \\
 Qx_2 \odot \overline{h_3} & Qx_1 \odot \overline{ h_4}
\end{array}
\right].
\]
Let $R_1$ and $R_2$ be the reflectors defined by
\[
R_1x=\left[
\begin{array}{cc}
 x_2 & x_1  \\
x_4  & x_3
\end{array}
\right],\quad 
R_2x=\left[
\begin{array}{cc}
 x_3 & x_4  \\
x_1  & x_2
\end{array}
\right],
\]
for all $x\in \IC^{m\times m}$. 
Let 
\beqn
y'&= &(QR_1 x)\odot \overline{R_1h},\\
y''&=&(QR_2 x)\odot \overline{R_2 h},\\
y'''&=&(QR_2R_1 x)\odot \overline{R_2R_1h}. 
\eeqn
We have from direct calculation that 
   \beqn
  |\Phi(x\odot \mu)|&=&  |\Phi(y\odot \mu)|,\\
  |\Phi(R_1x\odot \mu)|&=&   |\Phi(y'\odot \mu)|,\\
  |\Phi (R_2x\odot \mu)|&=&  |\Phi(y''\odot \mu)|,\\
  |\Phi(R_2 R_1 x\odot \mu)|&=&  |\Phi(y'''\odot \mu)|.
  \eeqn 
  \commentout{
where
\[
R_1x=\left(
\begin{array}{cc}
 x_2 & x_1  \\
x_4  & x_3
\end{array}
\right),
R_1h=\left(
\begin{array}{cc}
 h_2 & h_1  \\
h_4  & h_3
\end{array}
\right),
y'= QR_1 x\odot \overline{R_1h},
\]
\[
R_2R_1x=\left(
\begin{array}{cc}
 x_4 & x_3  \\
x_2  & x_1
\end{array}
\right),
R_2R_1h=\left(
\begin{array}{cc}
 h_4 & h_3  \\
h_2  & h_1
\end{array}
\right),
y''=QR_2R_1 x\odot \overline{R_2R_1h}
\]
and
\[
R_2x=\left(
\begin{array}{cc}
 x_3 & x_4  \\
x_1  & x_2
\end{array}
\right),
R_2h=\left(
\begin{array}{cc}
 h_3 & h_4  \\
h_1  & h_2
\end{array}
\right),
y'''=QR_2 x\odot \overline{R_2 h}.
\]
}
Proposition~\ref{fo} implies
\beq\label{part1}
h=\gamma R_1 h=\gamma R_2 h=R_2R_1 h=\mu\odot \overline{Q\mu }.
\eeq
Using (\ref{fact}) and (\ref{part1}) we have 
\beq\label{73}
 |\Phi(x\odot \mu)|=|\Phi(Qx\odot Q\mu )|
=|\Phi(Q(x)\odot \overline{h}\odot \mu)|
=|\Phi(y\odot \mu)|.\eeq
Similarly,
 we have
 \begin{eqnarray}
\label{74}&& |\Phi(R_1 x\odot \mu)|=|\Phi(QR_1 x\odot Q\mu)|=|\Phi(QR_1 x\odot\overline{ \gamma R_1 h}\odot \mu)|=
|\Phi(y'\odot \mu)|,\\
\label{75}&& |\Phi(R_2 x\odot \mu)|=|\Phi(QR_2 x\odot Q\mu )|=|F(QR_2 x\odot \overline{\gamma R_2 h}\odot\mu)|=
|\Phi(y''\odot \mu )|,\\
\label{76}&& |\Phi(R_2R_1x\odot \mu)|=|\Phi(QR_2R_1\odot Q\mu )|=|\Phi(QR_2R_1 x\odot \overline{R_2R_1h}\odot \mu)|=
|\Phi(y'''\odot \mu)|.\end{eqnarray}
The lefthand and righthand sides of eq. \eqref{73}-\eqref{76} are precisely the
ptychographic data for $q=2$ with the mask $\mu$. 
\end{proof}

{\bf Acknowledgements.} The research of A. Fannjiang is supported in part by  the US National Science Foundation  grant DMS-1413373. 
 %The research of P. Chen is supported in part by the grant 103-2115-
%M-005 -006 -MY2 from Ministry of Science and Technology, Taiwan, and the grant U01-HL-114494
%from National Heart, Lung, and Blood Institute, USA. \\

\end{document}